\documentclass[10pt,DIV=11,a4paper]{scrartcl}

\usepackage[pdfusetitle,hidelinks]{hyperref}

\usepackage{amsmath}
\usepackage{amsthm}
\usepackage{amssymb}
\usepackage{mathtools}
\usepackage{newtxmath}

\usepackage{color}

\usepackage{enumitem}
\setlist{noitemsep}

\usepackage[capitalise]{cleveref}
\usepackage[backend=bibtex,isbn=false,maxcitenames=5,maxnames=99]{biblatex}
\DeclareSourcemap{
  \maps[datatype=bibtex]{
    \map{
      \step[fieldsource=doi,final]
      \step[fieldset=url,null]
      \step[fieldset=urldate,null]
    }
  }
}

\bibliography{lit.bib}

\usepackage{tikz}
\usetikzlibrary{patterns}
\usepackage{pgfplots}
\pgfplotsset{compat=1.14}
\pgfplotsset{
    tick align=outside,
    x grid style={white},
    xmajorgrids,
    y grid style={white},
    ymajorgrids,
    axis line style={white},
    axis background/.style={fill=white!92!black},
    legend style={draw=white, fill=white},
     legend cell align={left}
}

\newtheorem{thm}{Theorem}
\newtheorem{proposition}[thm]{Proposition}
\newtheorem{lemma}[thm]{Lemma}

\theoremstyle{remark}
\newtheorem{remark}[thm]{Remark}


\newcommand{\tnorm}[1]{{\left\vert\kern-0.25ex\left\vert\kern-0.25ex\left\vert #1
    \right\vert\kern-0.25ex\right\vert\kern-0.25ex\right\vert}}


\newcommand{\R}{\mathbb{R}}
\newcommand{\N}{\mathbb{N}}

\newcommand{\fsL}{\textnormal{L}} 
\newcommand{\fsH}{\textnormal{H}} 
\newcommand{\fsW}{\textnormal{W}} 
\newcommand{\fsC}{\mathscr{C}} 

\newcommand{\ee}{\mathrm{e}} 
\newcommand{\dd}{\mathrm{d}} 

\newcommand{\init}{\mathrm{in}} 

\newcommand{\ind}{\vmathbb{1}} 
\DeclareMathOperator{\supp}{supp}

\DeclareMathOperator{\oscillation}{osc}
\DeclareMathOperator{\diam}{Diam}

\title{Global strong solutions for the triangular Shigesada-Kawasaki-Teramoto cross-diffusion system in three dimensions and parabolic regularisation for  increasing functions}

\author{Hector Bouton
  \thanks{Email: \href{hector.bouton@ens.psl.eu}
    {\texttt{hector.bouton@ens.psl.eu}}\\
  Université Paris Cité and Sorbonne Université, CNRS, IMJ-PRG, F-75013 Paris, France}
  \and
  Laurent Desvillettes
  \thanks{Email: \href{mailto:desvillettes@imj-prg.fr}
    {\texttt{desvillettes@imj-prg.fr}}
    \\
    Université Paris Cité and Sorbonne Université, CNRS and IUF, IMJ-PRG, F-75013 Paris, France.}
   \and
  Helge Dietert
   \thanks{Email: \href{mailto:helge.dietert@imj-prg.fr}
    {\texttt{helge.dietert@imj-prg.fr}} \\
    Université Paris Cité and Sorbonne Université, CNRS, IMJ-PRG, F-75013 Paris, France.}
}
\begin{document}
\maketitle

\begin{abstract}
  We prove the existence of global strong solutions to the triangular
  Shigesada-Kawasaki-Teramoto (SKT) cross-diffusion system with Lokta-Volterra
  reaction terms in three dimensions.  A key part is the independent careful
  study of the parabolic equation \(a\partial_t w - \Delta w = f\) with a rough coefficient
  \(a\), homogeneous Neumann boundary conditions, and the special assumption
  \(\partial_tw \ge 0\).  By the same method, we obtain estimates for solutions to reaction-diffusion
  systems modelling reversible chemistry.
\end{abstract}

\section{Introduction}

The Shigesada-Kawasaki-Teramoto (SKT) system
\cite{shigesada-kawasaki-teramoto-1979-spatial} is a popular cross-diffusion
model from Biology where it is used in population dynamics
\cite{desvillettes-2024-about-shigesada-kawasaki-teramoto}.  A common form is a
triangular cross-diffusion system with two unknowns
$u:=u(t,x) \ge 0, v:=v(t,x) \ge 0$ modelling the concentration of two species.  The
evolution is prescribed by
\begin{equation}
  \label{eq:SKT}
  \left\{
  \begin{aligned}
    \partial_t u - \Delta[ (d_1 +\sigma v) u ] = f_u (u,v), \\
    \partial_t v - d_2\,\Delta v = f_v (u,v), \\
  \end{aligned}
  \right.
\end{equation}
where $d_1,d_2 >0$ are constant diffusion rates, $\sigma >0$ is a coupling constant,
and \(f_u\) and \(f_v\) are Lotka-Volterra type terms modelling competition,
that is
\begin{align} \label{SKTge1}
  f_u(u,v) &= u\,( r_u - d_{11}\, u - d_{12}\, v), \\ \label{SKTge2}
  f_v(u,v) &=  v\,(r_v - d_{21}\, u - d_{22}\, v),
\end{align}
where $r_u,r_v\ge 0$, $d_{ij} > 0$, for $i,j=1,2$.  We also write
\(\mu := d_1+\sigma v\).

The system is defined on a smooth bounded domain $\Omega$ of $\R^d$, and endowed with
homogeneous Neumann boundary conditions
\begin{equation}\label{skt_bc}
  \nabla v \cdot \vec n(x) = 0 \quad \text{ and } \nabla [(d_1+\sigma v) u ] \cdot \vec n(x) = 0  \quad  \quad \text{ for } (t,x) \in \R_+ \times \partial\Omega,
\end{equation}
together with initial  conditions:
\begin{equation}\label{skt_id}
   u(0, \cdot) = u_\init , \qquad \qquad v(0, \cdot) = v_\init  .
\end{equation}
\medskip

The existence of global strong solutions in three dimensions has been a long
open problem even though there is a strong interest in this cross-diffusion
system.  Previously, the theory of global strong solutions was only developed in
one or two dimensions in
\textcite{lou-ni-wu-1998,desvillettes-2024-about-shigesada-kawasaki-teramoto}.
For higher dimensions there were only results (about global strong solutions)
when the cross-diffusion is small, see
\textcite{choi-lui-yamada-2003-existence-shigesada-kawasaki-teramoto}, or when
self-diffusion is added, see
\textcite{choi-lui-yamada-2004-existence-shigesada-kawasaki-teramoto,
  van-2007-global-shigesada-kawasaki-teramoto,van-2008}.  Notice that existence
of weak solutions is easy to obtain in the triangular case for all dimensions
(cf.\ for example
\cite{desvillettes-2024-about-shigesada-kawasaki-teramoto,desvillettes-trescases-2015-new}). The
non-triangular case is completely different and entropy functionals play there a
very important role
(\cite{chen-juengel-2006-analysis,desvillettes-lepoutre-moussa-trescases-2015}).
Finally, we refer to \cite{guerand-menegaki-trescases-2023-global} for an
example of use of De~Giorgi type methods for cross-diffusion equations belonging
to the same class as the SKT system.

This work shows the global existence of strong solutions in dimensions \(d \le 4\) for the triangular system (without self-diffusion and for cross-diffusion of arbitrary size):

\begin{thm}\label{proskt}
  Let $d \le 4$ and $\Omega \subset \R^d$ be a smooth ($\fsC^2$) bounded domain, let
  $d_1,d_2,\sigma >0$, $r_u,r_v\ge 0$, $d_{ij} > 0$ for $i,j=1,2$.  We suppose that
  $u_\init, v_\init \ge 0$ are initial data which lie in $\fsC^2(\overline{\Omega})$ and
  are compatible with the Neumann boundary condition.

  Then, there exists a global (defined on $\R_+ \times \Omega$) strong (that is, all terms
  appearing in the equation are defined a.e.) solution to system \eqref{eq:SKT}--\eqref{skt_id}.
\end{thm}



\medskip

Another consequence of our strategy is a new and very quick proof of existence of strong
global solutions (for general initial data) in dimension $d \le 4$, to a classical
quadratic system of reaction diffusion coming out of reversible chemistry.

Namely, we consider here the unknowns $u_i: = u_i(t,x) \ge 0$, with
$i=1,\dots ,4$, and the set of equations
\begin{equation}\label{u14}
  \partial_t u_i - d_i \,\Delta u_i = (-1)^i \,(u_1\,u_3 - u_2\,u_4),
\end{equation}
with $d_i >0$, defined on a smooth bounded domain $\Omega$ of $\R^d$, together with
homogeneous Neumann boundary conditions and initial conditions:
\begin{equation}\label{u14bc}
  \nabla u_i(t,x) \cdot \vec n(x) = 0\quad \text{ for } (t,x) \in \R_+ \times \partial\Omega, \qquad  u_i(0, \cdot) = u_i^\init ,
\end{equation}
and we prove the following proposition:
\medskip

\begin{thm}\label{pron}
  Let $d \le 4$ and $\Omega \subset \R^d$ be a smooth ($\fsC^2$) bounded domain.  Let $d_i >0$
  be constant diffusion rates, and suppose that the initial data $u_i^\init \ge 0$
  lie in $\fsC^2(\overline{\Omega})$ and are compatible with the homogeneous Neumann
  boundary condition of \eqref{u14bc}.

  Then there exists a global (defined on $\R_+ \times\Omega$) strong (that is, all terms
  appearing in the equation are defined a.e.) solution to system \eqref{u14},
  \eqref{u14bc}.
\end{thm}

Existence of strong solutions for this system in all dimension $d$ was obtained
by different methods in
\cite{caputo-goudon-vasseur-2019-solutions-c,fellner-morgan-tang-2020-global,souplet-2018-global},
following the pioneering works \cite{kanel-1984-cauchys,kanel-1990-solvability}
in the whole space.

The specificity of our new proof is to be extremely short and self-contained.
Moreover, the constants can be explicitly estimated (for example, the quadratic
nonlinearity could be replaced by a nonlinearity behaving like a power $2+\beta$, where
$\beta>0$ can be explicitly estimated in terms of the diffusion rates $d_i$). One
drawback however is that it does not cover the cases when $d\ge 5$. The methods that we use
are much closer to those of
\cite{caputo-goudon-vasseur-2019-solutions-c,fellner-morgan-tang-2020-global}
than those of
\cite{souplet-2018-global,kanel-1984-cauchys,kanel-1990-solvability}.  \medskip

We also provide a more general result (Proposition~\ref{npp}) of existence of
global strong solutions in dimension $d\le 4$ for reaction-diffusion systems with
mass dissipation, improving recent results on the same kind of systems obtained
in \cite{morgan-tang-2020-boundedness-lyapunov,
  fellner-morgan-tang-2020-global,fellner-morgan-tang-2021-uniform}.  More
details and references are provided in \cref{sec:quadratic-intermediate-sums}.
Among the systems which can be treated, we present a system coming from the
reversible chemical reaction $b_1S_1 + ... + b_mS_m \rightleftharpoons S_{m+1} + S_{m+2}$, where
$m \in\N\setminus\{0\}$ and for $i \in 1,\dots,m$, we have
$b_i \in\N\setminus\{0\}$. This system writes
\begin{equation} \label{uum}
  \left\{ \begin{lgathered}
  \partial_t u_{m+1} - d_{m+1} \Delta u_{m+1} = \prod_{i=1}^m u_i^{b_i} - u_{m+1}\,u_{m+2}, \\
  \partial_t u_{m+2} - d_{m+2} \Delta u_{m+2} =  \prod_{i=1}^m u_i^{b_i} - u_{m+1}\,u_{m+2},  \\
    \partial_t u_i - d_i \Delta u_i = -  b_i\,\left( \prod_{i=1}^mu_i^{b_i} - u_{m+1}\,u_{m+2}\right)\quad
    \text{ for }\quad i=1,\dots,m.
 \end{lgathered} \right.
\end{equation}
It is complemented with the homogeneous Neumann boundary condition and initial
data \eqref{u14bc} (for $i=1,\dots,m+2$).  \medskip

Then, we can prove the

\begin{thm}\label{pron2}
  Let $d \le 4$ and $\Omega \subset \R^d$ be a smooth ($\fsC^2$) bounded domain.  Let $d_i >0$
  be constant diffusion rates, and suppose that the initial data $u_i^\init \ge 0$
  lie in $\fsC^2(\overline{\Omega})$ and are compatible with the homogeneous Neumann
  boundary condition of \eqref{u14bc}.

  Then there exists a global (defined on $\R_+ \times\Omega$) strong (that is, all terms
  appearing in the equation are defined a.e.) solution to system \eqref{uum},
  \eqref{u14bc}.
\end{thm}
\bigskip

A key step of our approach is the study of solutions \(u :=u(t,x)\) to the equation
\begin{equation*}
  \partial_t u - \Delta(\mu u) = f(t,x),
\end{equation*}
where \(f\) is some forcing and \(\mu :=\mu(t,x)\) are given diffusion coefficients
for which we know only a lower and upper bound (rough coefficients).  Such an equation can be studied thanks
to 
duality estimates, see
\cite{canizo-desvillettes-fellner-2014-improved,lepoutre-moussa-2017-entropic}.
However, we will rather follow
\cite{fellner-morgan-tang-2020-global,caputo-goudon-vasseur-2019-solutions-c},
and introduce a new unknown \(w(t,x) := \int_{s=0}^t \mu(s,x)\,u(s,x)\, \dd s\).

This leads to the independent study of the boundedness and Hölder regularity of
solutions \(w :=w(t,x)\) to the parabolic equation (with homogeneous Neumann
boundary conditions)
\begin{equation}
  \label{eq:rough-scalar-pde}
  a(t,x) \partial_t w(t,x) - \Delta w(t,x) = f(t,x)
  \qquad \text{assuming moreover that }  \quad \partial_t w \ge 0,
\end{equation}
where \(a\) is a ``rough'' coefficient, i.e., we only suppose that
\begin{equation}
  \label{eq:assumption-a}
  0 < a_0 \le a \le c_0 a_0 < \infty ,
\end{equation}
for some constants \(a_0\) and \(c_0 \ge 1\).  
\par 
For the forcing \(f\), we assume that it lies in a suitable Lebesgue space.

The studied parabolic equation~\eqref{eq:rough-scalar-pde} is a special case of more
general parabolic equations in non-divergence form, which write
\begin{equation}\label{eq:general-non-divergence}
  \partial_t w - \sum_{i,j} a^{ij} \partial_{ij} w = f.
\end{equation}
In this general case, \textcite{krylov-1976-sequences-convex-functions} proved a
parabolic version of the ABP maximum principle, stating that $\| w \|_{\fsL^{\infty}}$
can be estimated by $ \| f \|_{\fsL^{1+d}}$, where $d$ is the space dimension.

We also quote the subsequent works
\cite{krylov-safonov-1979,krylov-safonov-1981,
  krylov-1987-some,escauriaza-1992-krylov-tsos,krylov-safonov-1994,
  krylov-2009-controlled,mooney-2019-krylov-safonov,krylov-2021-stochastic-ld}
and in the related elliptic case \cite{krylov-2021-review}, where it is shown
that Hölder regularity (that is, $w \in \fsC^{\alpha}$ for some $\alpha >0$) can even be obtained
for Equation \eqref{eq:general-non-divergence}.  \medskip

We show in this paper that the additional information \(\partial_t w \ge 0\) allows a
direct comparison with the heat equation (see
\cite{fellner-morgan-tang-2020-global} and in a similar matter
\cite{caputo-goudon-vasseur-2019-solutions-c}), allowing to treat all the
details related to the (homogeneous Neumann) boundary condition, and to provide
explicit constants in the estimates, including a precise and sharp dependency on
the forcing term \(f\).  \medskip

Our main result for solutions to \eqref{eq:rough-scalar-pde} is the following.
\begin{thm}\label{thm:final-hoelder-regularity}
  We consider a bounded, $\fsC^2$ domain \(\Omega \subset \R^d\).  Set \(T>0\), and
  \(p, q \in [1, \infty]\) such that $\gamma := 2 - \frac2p - \frac{d}q > 0$.  Then there
  exists a constant \(\alpha >0\) only depending on \(\gamma, d,c_0\), and a constant
  \(C_*\) depending on \(p,q,d, \Omega, T, a_0,c_0\) such that for any Lipschitz
  initial data $w_\init$, forcing data \(f \in \fsL^{p}((0,T]; \fsL^q(\Omega))\) and a
  coefficient \(a := a(t,x)\) satisfying the bound \eqref{eq:assumption-a}, a
  solution \(w \ge 0\) of \eqref{eq:rough-scalar-pde} over \((0,T] \times \Omega\) with
  homogeneous Neumann boundary data ($\vec n \cdot \nabla_x w = 0$ on $ (0,T] \times \partial\Omega$),
  lies in \(\fsC^{\alpha}([0,T] \times {\overline{\Omega}} ) \).  Moreover, the following
  estimate holds (with the notation $x_+ := \max(x,0)$):
  \begin{multline*}
    \| w \|_{\fsC^\alpha([0,T] \times {\overline{\Omega}} ) }
    \le C_*
    \left(
      \| f_+ \|_{\fsL^p((0,T]; \fsL^{q}(\Omega))}
      + \| w_{\init} \|_{\mathrm{Lip}}
    \right)^{1-\alpha/\gamma}
    \left(
      \| f \|_{\fsL^p((0,T]; \fsL^{q}(\Omega))}
      +
      \| w_\init \|_{\mathrm{Lip}}
    \right)^{\alpha/\gamma}.
  \end{multline*}
\end{thm}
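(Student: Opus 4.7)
The cornerstone of the proof is that, under the structural assumption $\partial_t w \ge 0$, the equation can be compared directly with the linear heat equation: since $a \ge a_0$ one has
\[
  a_0 \partial_t w - \Delta w \;=\; f - (a - a_0)\,\partial_t w \;\le\; f \;\le\; f_+,
\]
and symmetrically $c_0 a_0\,\partial_t w - \Delta w \ge f \ge -f_-$. Both inequalities place $w$ under parabolic operators with \emph{constant} coefficients, so a weak maximum principle reduces the boundedness and regularity questions for $w$ to those of the classical Neumann heat equation on $(0,T]\times\Omega$. The plan is to get (i) an $\fsL^{\infty}$ bound depending only on $f_+$ from the first comparison, (ii) a H\"older-seminorm bound depending on the full $f$ from the two-sided comparison combined with monotonicity, and (iii) the stated inequality by parabolic interpolation between them.

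For (i), I would introduce the caloric comparator $V$ solving $a_0\partial_t V - \Delta V = f_+$ with $V(0,\cdot)=w_\init$ and homogeneous Neumann data; applying the maximum principle for $a_0\partial_t - \Delta$ to $w-V$ gives $w\le V$, and together with the assumption $w \ge 0$ this yields $\|w\|_{\fsL^{\infty}} \le \|V\|_{\fsL^{\infty}}$. The condition $\gamma = 2 - 2/p - d/q > 0$ is exactly what makes the parabolic $\fsL^p(\fsL^q)\hookrightarrow\fsL^{\infty}$ embedding (a Sobolev--Morrey-type estimate) valid for the Neumann heat equation, so
\[
  \|w\|_{\fsL^{\infty}} \;\lesssim\; \|f_+\|_{\fsL^p((0,T];\fsL^q(\Omega))} + \|w_\init\|_{\fsL^{\infty}},
\]
which is the first summand of the stated bound, after absorbing $\|w_\init\|_{\fsL^{\infty}}$ into $\|w_\init\|_{\mathrm{Lip}}$.

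For (ii), I would pair $V$ with the lower comparator $U$ solving $c_0 a_0\partial_t U - \Delta U = -f_-$ with the same initial and Neumann data, so that $U\le w\le V$, and note that $V-U$ obeys a heat-type equation driven by $|f|$, giving a H\"older bound on the sandwich of the form $[V-U]_{\fsC^{\gamma}} \lesssim \|f\|_{\fsL^p(\fsL^q)} + \|w_\init\|_{\mathrm{Lip}}$. Transferring this to $[w]_{\fsC^{\gamma}}$ is the delicate step: the monotonicity $\partial_t w\ge 0$ constrains oscillations of $w$ in the time direction in terms of oscillations of $V$, and a local oscillation-decay argument in parabolic cylinders transfers the comparators' regularity to $w$. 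The $\fsC^2$ assumption on $\partial\Omega$ is then used, after flattening the boundary and even-reflecting, so that the heat-kernel estimates used inside $\Omega$ remain valid up to $\partial\Omega$ under the homogeneous Neumann condition. The stated inequality finally follows from the classical interpolation
\[
  [w]_{\fsC^{\alpha}} \;\lesssim\; \|w\|_{\fsL^{\infty}}^{\,1-\alpha/\gamma}\,[w]_{\fsC^{\gamma}}^{\,\alpha/\gamma},
\]
plugging in the bounds of steps (i) and (ii): the $\fsL^{\infty}$ factor depends only on $f_+$ and the $\fsC^{\gamma}$ factor on the full $f$, producing the product structure with exponent $\alpha/\gamma$. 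The main obstacle is step (ii) up to the Neumann boundary while keeping the \emph{separated} dependence on $f_+$ (in the $\fsL^{\infty}$ piece) versus $f$ (in the H\"older piece); this is the heart of what the elementary comparison strategy buys over a black-box invocation of non-divergence Krylov--Safonov theory.
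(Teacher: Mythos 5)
Your step (i) matches the paper's supremum bound (comparison with the caloric solution of $a_0\partial_t v-\Delta v=f$, positivity of the Neumann heat kernel, and $\gamma>0$ for the $\fsL^p(\fsL^q)\to\fsL^\infty$ mapping), and the overall philosophy --- exploit $\partial_t w\ge 0$ to sandwich $w$ between constant-coefficient heat flows at speeds $a_0$ and $c_0a_0$ --- is indeed the paper's. However, steps (ii) and (iii) contain genuine gaps. In (ii), the global sandwich $U\le w\le V$ cannot be ``transferred'' to a H\"older bound on $w$: on a small parabolic cylinder $Q$ it only yields $\oscillation_Q w\le \oscillation_Q V+\|V-U\|_{\fsL^\infty(Q)}$, and $\|V-U\|_{\fsL^\infty}$ does not shrink with the cylinder --- even for $f\equiv 0$, $V$ and $U$ are the Neumann heat semigroup applied to $w_\init$ at the two different speeds, so their difference is of the order of the oscillation of the data. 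The actual mechanism (\cref{thm:abstract-decay,thm:oscillation-decay-boundary}) is a De Giorgi/Krylov--Safonov-type dichotomy performed \emph{at every scale}: on each rescaled cylinder one normalises $w$ to $[0,1]$, distinguishes whether $\{w\ge \tfrac12\}$ or $\{w\le \tfrac12\}$ fills at least half of a reference ball $U_e$ at the initial time of that cylinder, and uses a quantitative \emph{lower} bound on the Neumann heat kernel (spreading of mass from $U_e$ into $U_r$) to gain the fixed factor $1-\delta$. The caloric comparators enter only locally, with initial datum $\tfrac12\ind_E$, not globally with datum $w_\init$; this dichotomy is the heart of the proof and is missing from your sketch.

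In (iii), the interpolation $[w]_{\fsC^\alpha}\lesssim\|w\|_{\infty}^{1-\alpha/\gamma}[w]_{\fsC^\gamma}^{\alpha/\gamma}$ presupposes $w\in\fsC^\gamma$, which is neither claimed nor available: with a rough coefficient $a$ the attainable exponent is $\alpha=\min\bigl(\ln((1-\delta)^{-1})/\ln 4,\gamma,\tfrac12\bigr)$, in general strictly smaller than $\gamma$; only the caloric comparators are $\fsC^{\gamma}$, not $w$ itself. The product structure $(\|f_+\|+\|w_\init\|_{\mathrm{Lip}})^{1-\alpha/\gamma}(\|f\|+\|w_\init\|_{\mathrm{Lip}})^{\alpha/\gamma}$ in the theorem does not come from an interpolation inequality applied to $w$, but from the choice of the starting scale
\begin{equation*}
  R_f=R_0\left(1+\frac{\|f\|_{\fsL^p(\fsL^q)}}{\|w\|_\infty+\|w_\init\|_{\mathrm{Lip}}}\right)^{-1/\gamma}
\end{equation*}
in the iteration of the oscillation decay, which balances the geometric gain $\Lambda^k$ against the accumulated forcing contribution $C R^\gamma\|f\|_{\fsL^p(\fsL^q)}$. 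As written, your step (iii) assumes a stronger regularity than the theorem asserts, so the argument does not close.
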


In the above estimate, we denote
$$  \| w_{\init} \|_{\mathrm{Lip}} := \| w_{\init} \|_{L^{\infty}( {{\Omega}} ) } +
 \sup_{ x,x' \in \Omega, \, x \neq x'}
 \, \frac{|w(t,x) - w(t',x')|}{ |x-x'|}\,  , $$  
$$  \| w \|_{\fsC^\alpha([0,T] \times {\overline{\Omega}} ) } := \| w \|_{L^{\infty}([0,T] \times {{\Omega}} ) } +
 \sup_{t,t' \in [0,T], x,x' \in \Omega: (t,x) \neq (t',x')}
 \, \frac{|w(t,x) - w(t',x')|}{|t - t'|^{\alpha/2} + |x-x'|^{\alpha}}\, ,$$
and $\fsC^\alpha([0,T] \times {\overline{\Omega}})$ is the space of functions $w$ from  $[0,T] \times {\Omega}$ to $\R$,
for which $ \| w \|_{\fsC^\alpha([0,T] \times {\overline{\Omega}} ) }$ is finite.
\medskip

The constant $\alpha$ can be taken as 
$$ \alpha := \min \left( \frac{ \ln ([1 - \delta]^{-1}) }{ \ln 4 } , \gamma, \frac12 \right), $$
where 
$$ \delta := {\frac{13}{1568}} \, \left( 98\,\pi \,\right)^{-\frac{d}2}\, d^{\frac{d}2+1}   \, \ee^{ - d c_0} \,  |B_d(0, 1)|. $$

\begin{remark}
  Note that we obtain the Hölder regularity uniform up to the boundary of the
  domain, and to arbitrary small times.  To treat those small times, we need to
  assume some regularity of the initial datum. In
  \cref{thm:final-hoelder-regularity}, we supposed that $w_{\init}$ has
  Lipschitz regularity, which is not optimal (but leads to simplified proofs).
\end{remark}

\begin{remark}
  Compared to the ABP maximum principle in which the forcing lies in $L^p$ with
  $p=1+d$, \cref{thm:final-hoelder-regularity} gets us close to the critical
  estimate when $p=q$, as we get arbitrary close to the critical exponent
  \(p=1+d/2\) for the forcing.
\end{remark}

The crucial step in the proof of our main Theorem is the decay of oscillations.
Hence we study around a point \(z_0 :=(t_0,x_0)\) the parabolic cylinder
\( Q^{\beta}_R(z_0) = (t_0-\beta R^2,t_0] \times B_d(x_0,R) \) where $R>0$ is a scaling,
parameter and \(\beta>0\) is a parameter which will be chosen small later on.  The
idea to obtain Hölder regularity is to show that the oscillation of $w$ over a
cylinder \(Q^{\beta}_{R/4}(z_0)\) is controlled by the oscillation over
\(Q^{\beta}_R(z_0)\) with a factor strictly less than one.  \medskip

The following Proposition shows such a decay where we include a possible
boundary with Neumann boundary data (Note that by scaling for
\cref{thm:final-hoelder-regularity} it will only be used for \(R=1\); we state
the general case \(R>0\) for possible other applications).

\begin{proposition}\label{thm:oscillation-decay-boundary}
  Fix $d \in \N - \{0\}$, \(p, q \in [1, \infty] \) such that
  $ \gamma := 2 - \frac{2}{p} - \frac{d}q > 0$.  There exist constants \(\beta,\delta,A>0\)
  depending only on $d$, $p$, $a_0$, $c_0$ such that for any $R>0$ and any
  function \(\phi : \R^{d-1} \to \R\) with \(\phi \le \frac{R}{11\,d}\) and
  \(\| \nabla \phi\|_{\infty} \le \frac1{11\,d}\) defining the domains
 \begin{equation*}
    \Omega_R = \{ (x',x_d) \in B_d(0,R) : x_d > \phi(x') \},
    \quad  \quad
    \Omega_{R/4} = \{ (x',x_d) \in B_d(0,R/4) : x_d > \phi(x') \},
  \end{equation*}
  then a function \(w\) with \(0\le w \le 1\) solving \eqref{eq:rough-scalar-pde}
  with a coefficient $a := a(t,x)$ satisfying the bound \eqref{eq:assumption-a}
  over \((-\beta R^2,0] \times \Omega_R\), with homogeneous Neumann boundary condition along
  the graph, that is
 \begin{equation*}
    \vec n \cdot \nabla_x w = 0
    \quad \text{on }
    (-\beta R^2,0] \times
    \{(x',x_d) \in B_d(0,R) : x_d = \phi(x') \}, 
  \end{equation*}
  also satisfies the reduction of oscillation bound
  \begin{equation*}
    \oscillation_{(-\beta R^2/16,0] \times \Omega_{R/4}} w  \le 1-\delta + C_{f,R} \,\, 
    \, R^{2 - \frac2{p} - \frac{d}{q}} \, \| f \|_{\fsL^p((- \beta  \,  R^2,0); \fsL^{q}( \Omega_R))},
  \end{equation*}
  where \(C_{f,R} := A \,C^*_{\frac{49 \,R^2}{200\, d}, d} \), the notation
  $C^*_{T, d}>0$ being defined in \cref{thm:heat-kernel-phi-bound}.

  The constants $\beta,\delta,A>0$ can be explicitly estimated as:
  \begin{equation*}
    \beta :=   \frac{49 \,a_0}{200\, d}; \quad
    \delta :=  {\frac{13}{1568}} \, \left( 98\,\pi \,\right)^{-\frac{d}2}\, d^{\frac{d}2+1}   \, \ee^{ - d c_0} \,  |B_d(0, 1)| ; \quad
    A :=
    (a_0c_0)^{\frac{d}{2q}} \,
    \left(1 - \frac{d p'}{2q} \right)^{- \frac1{p'}} \, \left( \frac{25}{64} \, \frac{a_0^2}{2d} \right)^{\frac1{p'} - \frac{d}{2q}} .
  \end{equation*}
\end{proposition}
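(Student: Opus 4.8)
\textbf{Proof strategy for \cref{thm:oscillation-decay-boundary}.}

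The plan is to exploit the crucial structural assumption $\partial_t w \ge 0$, which, combined with $-\Delta w = f - a\partial_t w$, says that $\Delta w \le f$ on the cylinder. Together with the homogeneous Neumann condition along the graph, this means $w$ is essentially a subsolution of the heat equation (up to the forcing) \emph{in reverse} — or, put differently, that $w$ can be compared from above with solutions of the heat equation with Neumann boundary data. First I would reduce to $R=1$ by the parabolic scaling $w_R(t,x) := w(R^2 t, R x)$, which replaces $a$ by $R^2 a$ (rescaling $a_0$) and $f$ by $R^2 f(R^2\cdot,R\cdot)$; tracking how the $\fsL^p_t\fsL^q_x$ norm of $f$ transforms under this scaling is exactly what produces the prefactor $R^{2-2/p-d/q}$ and motivates the hypothesis $\gamma>0$. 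The mild smallness conditions $\phi \le \tfrac{R}{11d}$ and $\|\nabla\phi\|_\infty \le \tfrac1{11d}$ are there to guarantee that, after straightening or after working directly, the relevant subdomains $\Omega_R, \Omega_{R/4}$ and the parabolic cylinders $Q^\beta$ sit nicely inside the region where the comparison and the heat-kernel estimate of \cref{thm:heat-kernel-phi-bound} apply, and that the reflected heat kernel across the (nearly flat, Lipschitz) graph is controlled.

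The heart of the argument is a De Giorgi–Moser–style ``measure-to-pointwise'' step. Since $0 \le w \le 1$ on $Q^\beta_R$, either $w$ is at least $\tfrac12$ on at least half the measure of $Q^\beta_R$, or it is at most $\tfrac12$ on at least half. Consider the first case (the other being symmetric, replacing $w$ by $1-w$, whose Laplacian has the opposite sign and which is handled by the ``sign condition on $\Delta w$'' remark). I would then build a comparison function: let $h$ solve the heat equation $a_0 \partial_t h - \Delta h = 0$ (or simply $\partial_t h - \Delta h = 0$ after absorbing $a_0$) on $(-\beta R^2,0]\times\Omega_R$ with homogeneous Neumann data on the graph, with $h = 1$ on the parabolic boundary away from the graph, and initial data $h = \ind_{\{w\ge 1/2\}}$ at time $t = -\beta R^2$ — or more robustly take $h$ to be the Neumann heat semigroup applied to $\tfrac12\ind_{\{w \ge 1/2\}}$. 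Using $\Delta w \le f$, $\partial_t w \ge 0$ and \eqref{eq:assumption-a}, a maximum-principle/comparison argument (in the spirit of \cite{fellner-morgan-tang-2020-global}) gives $w \ge h - (\text{forcing correction})$ on the smaller cylinder, where the forcing correction is the Duhamel term $\int \ee^{(t-s)\Delta}f_+\,ds$, estimated in $\fsL^\infty$ by $C^*_{T,d}\|f\|_{\fsL^p_t\fsL^q_x}$ via \cref{thm:heat-kernel-phi-bound} — this is precisely where $C_{f,R} = A\,C^*_{49R^2/(200d),d}$ and the explicit constant $A$ come from, with the powers of $a_0, c_0, p', q$ tracking Hölder's inequality in time against the heat-kernel decay. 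The Neumann heat kernel on $\Omega_R$, obtained by reflecting across the nearly flat graph, has a two-sided Gaussian lower bound on the relevant time-space scale; hence $h \ge \delta$ on $Q^\beta_{R/4}$ for an explicit $\delta>0$ — essentially $\tfrac12$ times the Gaussian mass, which after plugging in $\beta = 49a_0/(200d)$ and the ball volume gives the stated formula for $\delta$. Combining, $\inf_{Q^\beta_{R/4}} w \ge \delta - C_{f,R}R^{\gamma}\|f\|$, so $\oscillation_{Q^\beta_{R/4}} w \le 1 - \delta + C_{f,R}R^\gamma\|f\|$.

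The main obstacle I anticipate is the boundary handling: making the comparison with the \emph{Neumann} heat flow fully rigorous near the graph $\{x_d = \phi(x')\}$, and extracting an \emph{explicit} Gaussian lower bound for the Neumann heat kernel on the curved Lipschitz domain $\Omega_R$ with the stated numerical constants. The smallness $\|\nabla\phi\|_\infty \le 1/(11d)$ suggests the authors avoid a change of variables and instead reflect directly, controlling the error from non-flatness by a perturbative estimate; getting the constant $\delta$ with the factor $\ee^{-dc_0}$ (which comes from the worst-case $a \le c_0 a_0$ slowing the diffusion) and the precise choice $\beta = 49a_0/(200d)$ to fit everything inside $B_d(0,R)$ will require careful but routine bookkeeping. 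A secondary subtlety is ensuring the ``either/or'' dichotomy and the sign flip $w \mapsto 1-w$ interact correctly with the one-sided hypothesis $\partial_t w \ge 0$; this is why the statement is phrased with $0 \le w \le 1$ and why the companion remark about a sign condition on $\Delta w$ is needed — in the flipped case one only knows $\Delta(1-w) \ge -f$, which is still enough to run the comparison from below.
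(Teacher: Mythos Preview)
Your overall architecture --- dichotomy, comparison with a constant-coefficient heat equation, Gaussian-type lower bound --- matches the paper, but the comparison step as written would fail. You propose comparing $w$ from below with a solution of $a_0\partial_t h - \Delta h = 0$, but $(a_0\partial_t - \Delta)(w-h) = (a_0-a)\partial_t w + f$ has the wrong sign on the first term (since $a\ge a_0$ and $\partial_t w\ge 0$), so the minimum principle does not yield $w\ge h$. The paper's remedy is to use \emph{different} constant coefficients in the two branches: when $|\{w(-T,\cdot)\ge\tfrac12\}\cap U_e|\ge\tfrac12|U_e|$ one compares with $(a_0c_0\,\partial_t-\Delta)v=f$, so that $(a_0c_0\partial_t-\Delta)(w-v)=(a_0c_0-a)\partial_t w\ge 0$; in the flipped case $\tilde w=1-w$ one has $\partial_t\tilde w\le 0$, and now the \emph{lower} bound is the right one, giving $(a_0\partial_t-\Delta)(\tilde w-\tilde v)=(a_0-a)\partial_t\tilde w\ge 0$. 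This asymmetric use of $a_0c_0$ versus $a_0$ is exactly how the one-sided hypothesis $\partial_t w\ge 0$ suffices for both cases; your appeal to the remark about a sign condition on $\Delta w$ is misplaced --- that remark offers an alternative hypothesis for the whole result, not a device for the sign-flip step.

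Two further points where the paper differs from your sketch. First, on the spherical part of $\partial\Omega_R$ there is no boundary condition on $w$ at all, so the paper imposes \emph{Dirichlet zero} on the comparison function there (admissible since $w\ge 0=v$) and works with a mixed Dirichlet--Neumann heat kernel $\Gamma_U$, not the pure Neumann semigroup; your ``$h=1$ on the parabolic boundary away from the graph'' would produce an upper barrier, not a lower one. Second, the lower bound on $\Gamma_U$ is not obtained by reflection across the graph but by direct comparison with the free-space kernel: one checks $\Gamma_U(t,x,y)\ge\Phi_{\R^d}(t,|x-y|)-\sup_{s\le t}\Phi_{\R^d}(s,\tfrac{7R}{10})$ for $y$ in a fixed interior source ball $U_e=B_d(\tfrac{R}{5}e_d,\tfrac{R}{10})$, and the smallness conditions on $\phi$ enter only to verify the Neumann-compatibility inequality $(x-y)\cdot\vec n(x)\ge 0$ along the graph. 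The dichotomy is then taken at the single time slice $t=-T$ over $U_e$, not over the full space-time cylinder, which is what makes the explicit constants $\beta,\delta$ come out cleanly.
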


\begin{remark}
  Note that we only impose a boundary condition on \(w\) along the graph of
  \(\phi\).  Further note that by taking \(\phi \equiv -R\), it also covers the case
  without boundary.
\end{remark}

\bigskip

In the proofs of the applications (\cref{proskt} and
\cref{pron}), we need  nonstandard interpolation
estimates that we propose to call ``one-sided interpolation estimates''. Since
they may be of interest for other results, we write them down here as a
self-contained Proposition:

\begin{proposition}\label{inpppg}
  Let $\Omega \subset \R^d$ be a bounded \(\fsC^2\) domain, and assume
  $p,q \in [1, \infty)$ satisfying \(\frac{d}{2} < \frac{3}{2}p \le q\).  Then for any
  $u,w : \Omega \to \R$ such that $0 \le u \le \Delta w$ in \(\Omega\), it holds that
  \begin{equation}\label{inppalk1}
    \|u \|_{\fsL^q(\Omega)} \le  C_{d,p,q}\, \| {w} \|_{\fsL^{\infty}(\Omega)}^{1-\frac{2-d/q}{3-d/p}}\,
    \| \nabla u\|_{\fsL^p(\Omega)}^{\frac{2-d/q}{3-d/p}}
    + C_{d,p,q}\, \| {w} \|_{\fsL^{\infty}(\Omega)} ,
  \end{equation}
  where $C_{d,p,q}>0$ is a constant depending only on $d,p,q$.

  Moreover, assuming that $\alpha \in (0,1)$, and that
  $q \ge \frac{3 - \alpha}{2 - \alpha}\, p > \frac{d}{2 - \alpha}$.  Then for any $u,w : \Omega \to \R$
  such that $0 \le u \le\Delta w$, it holds that
  \begin{equation}\label{inppalk}
    \|u \|_{\fsL^q(\Omega)}
    \le  C_{d,p,q, \alpha}\, \| {w} \|_{\fsC^{\alpha}(\overline{\Omega}) }^{1-\frac{2-\alpha - d/q}{3- \alpha - d/p}}\,
    \| \nabla u\|_{\fsL^p(\Omega)}^{\frac{2- \alpha - d/q}{3- \alpha - d/p}}
    +  C_{d,p,q, \alpha}\, \| {w} \|_{\fsC^{\alpha}(\overline{\Omega})},
  \end{equation}
  where $C_{d,p,q, \alpha}>0$ is a constant depending only on $d,p,q, \alpha$.
\end{proposition}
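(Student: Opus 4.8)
The plan is to recognise \eqref{inppalk1}--\eqref{inppalk} as Gagliardo--Nirenberg interpolation inequalities in disguise. The one-sided hypothesis $0\le u\le\Delta w$ says, heuristically, that $u$ is a second derivative of $w$, so $\|u\|_{\fsL^q}$ should interpolate between $\|\nabla u\|_{\fsL^p}$ (three derivatives of $w$) and $\|w\|_{\fsL^\infty}$, resp. $\|w\|_{\fsC^\alpha}$ (zero derivatives of $w$); carrying out the corresponding scaling computation ($u$ of size $M$ on a scale $L$ forcing $w$ of size $M L^2$, $\nabla u$ of size $M L^{d/p-1}$, etc.) produces exactly the exponents $\tfrac{2-d/q}{3-d/p}$, resp. $\tfrac{2-\alpha-d/q}{3-\alpha-d/p}$, together with the admissible ranges of the Proposition. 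The proof then has two ingredients: a local integral bound extracted solely from the one-sided condition, and an interpolation step combining it with the hypothesis $\nabla u\in\fsL^p$.

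\emph{Step 1: the one-sided condition gives a Morrey-type bound.} Fix a ball $B_{2r}(x_0)\subset\subset\Omega$. On it $\Delta w\ge u\ge 0$, hence $w$ is subharmonic; writing the Riesz representation $w=P[\,w|_{\partial B_{2r}(x_0)}\,]-\int_{B_{2r}(x_0)}G_{B_{2r}(x_0)}(\cdot,y)\,\Delta w(y)\,\dd y$ and using the explicit lower bound $G_{B_{2r}(x_0)}(x_0,y)\gtrsim_d r^{2-d}$ for $y\in B_r(x_0)$ gives
$$ \int_{B_r(x_0)}u\ \le\ \int_{B_r(x_0)}\Delta w\ \le\ C_d\,r^{d-2}\,\bigl(\langle w\rangle_{\partial B_{2r}(x_0)}-w(x_0)\bigr)\ \le\ C_d\,r^{d-2}\,\oscillation_{\overline{B_{2r}(x_0)}}w . $$
(One may instead test $0\le u\le\Delta w$ against the heat kernel and get the same bound; this is the variant closest to the rest of the paper.) Bounding the oscillation by $2\|w\|_{\fsL^\infty}$, resp. by $[w]_{\fsC^\alpha}(2r)^\alpha$, shows that $u$ obeys a Morrey-type bound $\int_{B_r(x_0)}u\lesssim r^{\lambda}\,\kappa$ for all such balls, with $\lambda=d-2$ and $\kappa=\|w\|_{\fsL^\infty}$, resp. $\lambda=d-2+\alpha$ and $\kappa=\|w\|_{\fsC^\alpha}$.

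\emph{Step 2: interpolation.} It remains to prove the Gagliardo--Nirenberg-type inequality
$$ \|u\|_{\fsL^q(\Omega)}\ \lesssim\ \|\nabla u\|_{\fsL^p(\Omega)}^{\theta}\,\kappa^{1-\theta}+\kappa, \qquad\text{with}\quad \theta=\tfrac{2-d/q}{3-d/p}\ \ \bigl(\text{resp. }\tfrac{2-\alpha-d/q}{3-\alpha-d/p}\bigr), $$
for any $u$ satisfying the Morrey-type bound of Step 1, in the stated range of $p,q$ (and $\alpha$). One route is a dyadic decomposition of $u$: on each scale the mean part is estimated by the Morrey bound and the oscillating part by Sobolev embedding applied to $\nabla u$, and the pieces are summed with the above exponents; the bounded-domain correction $\kappa$ and an absorption (possible since $p\le q$, so $\|u\|_{\fsL^p(\Omega)}\le|\Omega|^{1/p-1/q}\|u\|_{\fsL^q(\Omega)}$ is of lower order) are handled by Young's inequality. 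An equivalent, cleaner way to see the scaling for \eqref{inppalk1}: solve $\Delta v=u$ in $\Omega$ with $v=0$ on $\partial\Omega$; since $v$ and $w-v$ are subharmonic, the maximum principle gives $\|v\|_{\fsL^\infty(\Omega)}\le 2\|w\|_{\fsL^\infty(\Omega)}$, elliptic regularity on the $\fsC^2$ domain gives $\|v\|_{\fsW^{3,p}(\Omega)}\lesssim\|u\|_{\fsL^p}+\|\nabla u\|_{\fsL^p}+\|v\|_{\fsL^p}$, and the classical inequality $\|D^2v\|_{\fsL^q}\lesssim\|v\|_{\fsW^{3,p}}^{\theta}\|v\|_{\fsL^\infty}^{1-\theta}+\|v\|_{\fsL^\infty}$ together with $\|u\|_{\fsL^q}=\|\Delta v\|_{\fsL^q}\le C\|D^2v\|_{\fsL^q}$ closes the argument after the same absorption.

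\emph{Main obstacle.} The genuine difficulty is the boundary: since no condition is imposed on $w$ along $\partial\Omega$, the subharmonic comparison of Step 1 degenerates in a collar $\{\,\mathrm{dist}(\cdot,\partial\Omega)<\varepsilon\,\}$, where the Green function lower bound fails and one only obtains a distance-weighted estimate $\int_{B_r(x_0)\cap\Omega}u(y)\,\mathrm{dist}(y,\partial\Omega)\,\dd y\lesssim r^{d-1}\,\oscillation w$. This is where the $\fsC^2$ regularity of $\partial\Omega$ enters: after flattening $\partial\Omega$, the norm $\|u\|_{\fsL^q}$ on a thin collar is controlled by combining this weighted bound with $\nabla u\in\fsL^p$ in the normal direction (a Hardy/trace argument) and optimising in the collar width, while the interior part is treated by Steps 1--2. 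One also has to check that the elliptic regularity up to the $\fsC^2$ boundary is available at the order needed, and that the Young absorption is compatible with preserving the interpolation scaling — a routine case split according to whether $\|u\|_{\fsL^q}$ is or is not dominated by $\kappa$. Everything else is bookkeeping.
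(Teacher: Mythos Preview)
Your scaling intuition is right, and the two–ingredient plan (a local integral bound from $0\le u\le\Delta w$, then interpolation against $\nabla u\in\fsL^p$) is exactly the paper's strategy. The implementation, however, is quite different, and the difference is precisely what dissolves your ``main obstacle''.

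The paper never splits interior and boundary. It works directly on cut balls $B=B_d(x_0,R)\cap\Omega$ for $R\le R_0$; the $\fsC^2$ regularity is used once, to guarantee that each such $B$ is star-shaped with a core $B_d(\hat x_0,R/4)\subset\Omega$. On $B$ it proves
\[
\|u\|_{\fsL^q(B)}\ \le\ C\Big(R^{\,1-d(1/p-1/q)}\,\|\nabla u\|_{\fsL^p(B)}\ +\ R^{\,-2+\alpha+d/q}\,\|w\|_{\fsC^\alpha(\overline B)}\Big).
\]
The second term is obtained not via a Green function but by averaging against a bump: $\bar u=\int\chi_R(x)\,u(\hat x_0-x)\,\dd x$ with $\supp\chi_R\subset B_d(0,R/4)$, so that $\bar u\le\int\chi_R\,\Delta w=\int(\Delta\chi_R)\,w$ with \emph{no boundary terms} (the bump is compactly supported inside $\Omega$); subtracting the mean of $w$ inside the integral produces the factor $R^\alpha$. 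The first term is a Poincar\'e--Sobolev bound on $u-\bar u$, proved by the fundamental theorem of calculus along rays from the core (this is where star-shapedness enters) and Young's convolution inequality. Because the bump sits at $\hat x_0$ rather than $x_0$, nothing degenerates when $x_0$ approaches $\partial\Omega$; your distance-weighted fix is not needed.

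The interpolation exponent then falls out of a Besicovitch covering with \emph{adapted} radii: for each $x\in\Omega$ one picks $R(x)\le R_0$ so that $\|\nabla u\|_{\fsL^p(B_d(x,R(x))\cap\Omega)}=\|w\|_{\fsC^\alpha}\,R(x)^{-3+\alpha+d/p}$ (or $R(x)=R_0$ if this overshoots). This choice equalises the two terms in the local estimate, each becoming $\|w\|_{\fsC^\alpha}^{1-\theta}\|\nabla u\|_{\fsL^p(B)}^{\theta}$ with $\theta=(2-\alpha-d/q)/(3-\alpha-d/p)$; finite overlap and the condition $q\theta\ge p$ (equivalently $q\ge\frac{3-\alpha}{2-\alpha}p$) then allow the sum. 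This is your ``dyadic decomposition'' made precise, with radii chosen pointwise rather than on a fixed grid.

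Your auxiliary route $\Delta v=u$, $v|_{\partial\Omega}=0$ is a valid shortcut for \eqref{inppalk1}: the sandwich $0\le -v\le h-w$ (with $h$ harmonic, $h|_{\partial\Omega}=w|_{\partial\Omega}$) does give $\|v\|_{\fsL^\infty}\lesssim\|w\|_{\fsL^\infty}$, and Gagliardo--Nirenberg plus absorption closes. But for \eqref{inppalk} this has a genuine gap: the pointwise sandwich $0\le -v\le h-w$ with $h-w\in\fsC^\alpha$ does \emph{not} control $[v]_{\fsC^\alpha}$, so you cannot feed $\|v\|_{\fsC^\alpha}$ into Gagliardo--Nirenberg. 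You would need a different device in the H\"older case, and the paper's bump-function argument supplies one uniformly in $\alpha\in[0,1)$.
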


 Note that the name  ``one-sided interpolation estimates'' is related to the fact that we assume that $0 \le u \le \Delta w$ instead of assuming the stronger (and more classical) identity $u = \Delta w$.
\medskip

 A study of more general estimates of this form with applications to different equations is in preparation \cite{BDD2}.
\medskip

\bigskip

\Cref{hh} is devoted to the proof of \cref{thm:final-hoelder-regularity}.  We
start with estimates for the heat equation in
Subsection~\ref{sec:rough:heat-kernel}.  Then we use a comparison between the
heat equation and the equation with rough coefficients
\eqref{eq:rough-scalar-pde}, \eqref{eq:assumption-a}, and obtain the decay of
oscillation (\cref{thm:oscillation-decay-boundary}) in
Subsection~\ref{sec:rough:comparison}.  Finally, we perform global bounds to
start the iteration and conclude \cref{thm:final-hoelder-regularity} in
Subsection~\ref{sec:rough:conclusion} by using iteratively
\cref{thm:oscillation-decay-boundary}.  We present the proof of our first
application (\cref{proskt}) in Section~\ref{trois}, and of our second
application (\cref{pron} and its extensions, including \cref{pron2}) in
Section~\ref{quatre}.  The Appendix~\ref{AB} contains the proof of the
``one-sided interpolation estimates'' appearing in \cref{proskt}.

\section{Hölder regularity for heat equation with rough scalar coefficient}
\label{hh}

\subsection{Estimates for the heat kernel}
\label{sec:rough:heat-kernel}

We first establish estimates for the heat kernel, in particular in the case of a
domain with boundaries.  There is a huge literature on heat kernels in which
more advanced estimates in more general settings are presented, see for example
 \cite{saloff-coste-2010,gyrya-saloff-coste-2011-neumann-dirichlet, choulli-kayser-2015-gaussian-neumann-green,wang-1997-global} and the references therein.
  \medskip

The starting point for our estimates is the extension operator for functions
lying in a Sobolev space from a domain towards the whole space.  This is a
classical topic, see e.g.\
\cite{brezis-2011-functional-sobolev,evans-2010-partial}.  We recall a classical
estimate for a general domain, and state moreover the basic reflection estimate,
pointing out the dependency of the constant on the boundary.

\begin{lemma} \label{l6}
  Let $\Omega$ be a $\fsC^1$ domain of \,$\R^d$. Then, there exists an extension operator
  $E_2: \fsH^1(\Omega) \mapsto \fsH^1(\R^d)$ and a constant $C_E(\Omega)>0$ depending only on $\Omega$
  such that
  $$ \|E_2 u\|_{\fsH^1(\R^d)} \le C_E(\Omega) \,  \|u\|_{\fsH^1(\Omega)} . $$

  Moreover, for a $\fsC^1$ function \(\phi : \R^{d-1} \mapsto \R\), we define the domain
  \begin{equation*}
    \Omega_+ = \{ (x',x_d) \in \R^d : x_d > \phi(x') \}.
  \end{equation*}
  Then there exists an extension operator \(E_2 : \fsH^1(\Omega_+) \mapsto \fsH^1(\R^d)\) and
  $\fsL^1(\Omega_+) \mapsto \fsL^1(\R^d)$ such that
  \begin{equation*}
    \| E_2 u \|_{\fsH^1(\R^d)} \le 2 \,\sqrt{1 + \|\nabla \phi\|_{\infty}^2}\, \,
    \| u \|_{\fsH^1(\Omega_+)}.
  \end{equation*}  
  and
  \begin{equation*}
    \| E_2 u \|_{\fsL^1(\R^d)} \le 2 \,
    \| u \|_{\fsL^1(\Omega_+)}.
  \end{equation*} 
\end{lemma}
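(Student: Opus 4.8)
I would prove the graph case (the second assertion) first, as it contains the only real content, and then deduce the general case by the usual localisation. For the graph case, the plan is to use reflection across the graph of $\phi$: set $E_2 u(x',x_d):=u(x',x_d)$ if $x_d>\phi(x')$, and $E_2 u(x',x_d):=u\big(x',2\phi(x')-x_d\big)$ if $x_d\le\phi(x')$. The map $\Phi(x',x_d):=(x',2\phi(x')-x_d)$ is a $\fsC^1$ involution of $\R^d$ with $|\det D\Phi|\equiv 1$ that exchanges $\Omega_+$ with $\Omega_-:=\{(x',x_d):x_d<\phi(x')\}$; hence $E_2$ is a well-defined linear operator on $\fsL^1(\Omega_+)$, it is the identity on $\Omega_+$ (so it is genuinely an extension operator), and the change of variables $y=\Phi(x)$ immediately gives $\|E_2 u\|_{\fsL^1(\R^d)}=2\|u\|_{\fsL^1(\Omega_+)}$ and $\|E_2 u\|_{\fsL^2(\R^d)}^2=2\|u\|_{\fsL^2(\Omega_+)}^2$.

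It remains to control the gradient and to check that $E_2 u\in\fsH^1(\R^d)$. On $\Omega_-$ we have $E_2 u=u\circ\Phi\in\fsH^1(\Omega_-)$ since $\Phi$ is a $\fsC^1$ diffeomorphism with bounded derivatives, and the traces of $E_2 u$ from $\Omega_+$ and from $\Omega_-$ onto the $\fsC^1$ interface $\{x_d=\phi(x')\}$ both equal $u(x',\phi(x'))$, so $E_2 u\in\fsH^1(\R^d)$ by the standard gluing lemma for Sobolev functions across a Lipschitz hypersurface. On $\Omega_-$ the chain rule gives $\nabla_x(E_2 u)(x)=(a+2bc,\,-b)$, where $\nabla'$ denotes the gradient in $x_1,\dots,x_{d-1}$, $a:=(\nabla'u)(\Phi(x))$ and $b:=(\partial_{x_d}u)(\Phi(x))$ are the derivatives of $u$ at the reflected point $\Phi(x)$, and $c:=\nabla'\phi(x')$ (so $|c|\le\|\nabla\phi\|_\infty$); hence $|\nabla_x(E_2 u)(x)|^2=|a+2bc|^2+b^2$. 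I would then use the elementary inequality $|a+2bc|^2+b^2\le(3+4|c|^2)(|a|^2+b^2)$, which is immediate from $4|b|\,|a|\,|c|\le 2b^2+2|c|^2|a|^2$. Integrating over $\Omega_-$ and changing variables ($|\det D\Phi|\equiv 1$) yields $\|\nabla E_2 u\|_{\fsL^2(\Omega_-)}^2\le(3+4\|\nabla\phi\|_\infty^2)\,\|\nabla u\|_{\fsL^2(\Omega_+)}^2$; adding the trivial contribution $\|\nabla u\|_{\fsL^2(\Omega_+)}^2$ coming from $\Omega_+$ gives $\|\nabla E_2 u\|_{\fsL^2(\R^d)}^2\le 4(1+\|\nabla\phi\|_\infty^2)\,\|\nabla u\|_{\fsL^2(\Omega_+)}^2$. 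Together with $\|E_2 u\|_{\fsL^2(\R^d)}^2=2\|u\|_{\fsL^2(\Omega_+)}^2\le 4(1+\|\nabla\phi\|_\infty^2)\,\|u\|_{\fsL^2(\Omega_+)}^2$ this gives $\|E_2 u\|_{\fsH^1(\R^d)}^2\le 4(1+\|\nabla\phi\|_\infty^2)\,\|u\|_{\fsH^1(\Omega_+)}^2$, i.e.\ the claimed estimate after taking square roots.

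For the first assertion I would invoke the classical argument (see \cite{brezis-2011-functional-sobolev,evans-2010-partial}): as $\Omega$ is a bounded $\fsC^1$ domain, cover $\partial\Omega$ by finitely many balls in each of which, after a rigid motion, $\Omega$ coincides locally with a domain $\Omega_+$ as above for some $\fsC^1$ graph function $\phi$; apply the graph-case extension in each such chart, and patch the pieces together with a partition of unity subordinate to the cover (plus an interior patch). The resulting $E_2:\fsH^1(\Omega)\to\fsH^1(\R^d)$ is bounded with a constant $C_E(\Omega)$ depending only on $\Omega$ (through the number of charts, the $\fsC^1$ norms of the local graphs, and the cutoffs). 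I do not anticipate any genuine obstacle in any of this; the construction is entirely standard, and the only point requiring a little attention is to choose the quadratic inequality above so that the constant in the graph case comes out exactly $2\sqrt{1+\|\nabla\phi\|_\infty^2}$ rather than merely a cruder multiple of $1+\|\nabla\phi\|_\infty$ (for instance, the alternative route of first flattening the graph by the shear $y=(x',x_d-\phi(x'))$, then reflecting, then undoing the shear, gives the same result but with a slightly worse constant bookkeeping).
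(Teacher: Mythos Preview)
Your proposal is correct and follows essentially the same approach as the paper: both use the reflection $E_2 u(x',x_d)=u(x',2\phi(x')-x_d)$ on $\Omega_-$, compute the derivatives by the chain rule, and bound the resulting quadratic form to obtain the constant $2\sqrt{1+\|\nabla\phi\|_\infty^2}$, while citing Brezis/Evans for the general $\fsC^1$ case. Your write-up is in fact slightly more careful than the paper's (you explicitly address the $\fsH^1$ gluing across the interface and note $|\det D\Phi|\equiv 1$), and your pointwise inequality $|a+2bc|^2+b^2\le(3+4|c|^2)(|a|^2+b^2)$ is a clean way to package the same estimate the paper obtains componentwise.
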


\begin{proof}
  The first part of the statement is a classical result,
  cf.~\cite{brezis-2011-functional-sobolev}.  For the second part, it suffices
  to define and bound the extension for a smooth function
  \(u \in \fsH^1(\Omega_+)\). We define the extended function \(\bar u\) by reflection
  as
  \begin{equation*}
    E_2 u(x',x_d) =   \bar{u}(x',x_{d}) :=
    \begin{cases}
      u(x',x_d) & \text{if } x_d \ge \phi(x'), \\
      u(x',2\phi(x')-x_d)
                &\text{if } x_d < \phi(x').
    \end{cases}
  \end{equation*}
  Then, we directly get
  \(\| \bar u \|_{\fsL^1(\R^d)}^2 \le 2\, \| u \|_{\fsL^1(\Omega_+)}^2\) and
  \( \| \bar u \|_{\fsL^2(\R^d)}^2 \le 2\, \| u \|_{\fsL^2(\Omega_+)}^2\).  Moreover, for
  \(x_d < \phi(x')\), we find for the derivatives
  \begin{align*}
    \nabla_i \bar u &= \nabla_i u|_{(x',2\phi(x')-x_d)}
                 + 2 \nabla_i \phi\, \nabla_d u|_{(x',2\phi(x')-x_d)},
                 \qquad i=1,\dots,d-1\\
    \nabla_d \bar u &= \nabla_d u|_{(x',2\phi(x')-x_d)}.
  \end{align*}
  This shows that
  \begin{equation*}
    \begin{split}
      \| \bar u \|_{\fsL^2(\R^d)}^2 +  \| \nabla \bar u \|_{\fsL^2(\R^d)}^2
      &\le \int_{\Omega_+} \bigg( 2\, |u|^2 + 2\,| \nabla_d u|^2 + \sum_{i=1}^{d-1} [ 3\,|\nabla_i u|^2 + 4 \,|\nabla_i \phi|^2\, |\nabla_d u|^2 ] \bigg)\, \dd x \\
      &\le (4 + 4 \|\nabla \phi\|_{\infty}^2) \,  \| \bar u \|_{\fsH^1(\Omega_+)}^2,
    \end{split}
  \end{equation*}
  which then yields the statement.
\end{proof}

Given a domain \(\Omega \subset \R^d\), we split its boundary into \(\partial\Omega_d\) (Dirichlet
boundary condition) and \(\partial\Omega_n\) (Neumann boundary condition). For the domain
with the split boundary, we define for a fixed \(y \in \Omega\) the mixed heat kernel
\(\Gamma_{\Omega}\) as the solution to
\begin{equation*}
  \left\{
    \begin{aligned}
      &(\partial_t - \Delta_x) \Gamma_\Omega(t,x,y) = 0
      & & \text{for } (t,x) \in (0,\infty) \times \Omega, \\
      & \Gamma_{\Omega}(t,x,y) = 0
      & & \text{for } (t,x) \in (0,T) \times \partial\Omega_d, \\
      & \vec n \cdot \nabla_x \Gamma_{\Omega}(t,x,y) = 0
      & & \text{for } (t,x) \in (0,T) \times \partial\Omega_n, \\
      & \lim_{t \downarrow 0} \Gamma_\Omega(t,\cdot,y) = \delta_y.
    \end{aligned}
  \right.
\end{equation*}

We now develop estimates on \(\Gamma_{\Omega}\).

\begin{lemma}\label{thm:heat-kernel-upper-bound}
  Using the notation above, we suppose that there exists \(E\) an extension
  operator from the piecewise-$\fsC^1$ domain \(\Omega\) to \(\R^d\) in $\fsH^1$ and
  $\fsL^1$ for functions \(\{u \in \fsH^1(\Omega) : u|_{\partial\Omega_d}=0\}\) and
    set \(|||E||| = \max(\| E \|_{\fsH^1 \to \fsH^1}, \| E \|_{\fsL^1 \to \fsL^1})\).  Then for any time
  \(T>0\), there exists a constant \(C_{\Omega, T, d}>0\) depending on $T$, $d$, and
  $\Omega$ through $ ||| E |||$ only (and increasing with respect to $T$), such that
  for all $p \in [1,\infty)$, \(y \in \Omega\) and \(t \in (0,T)\),
  \begin{equation*}
    \| \Gamma_{\Omega}(t, \cdot, y) \|_{\fsL^1(\Omega)} \le 1, \qquad
    \| \Gamma_{\Omega}(t, \cdot, y) \|_{\fsL^p(\Omega)} \le C_{\Omega, T, d}\, t^{-d/(2 p')},
    \qquad
    \| \Gamma_{\Omega}(t, \cdot, y) \|_{\fsL^\infty(\Omega)} \le C_{\Omega, T, d}\, t^{-d/2},
  \end{equation*}
  and for all \(x \in \Omega\) and \(t \in (0,T)\),
  \begin{equation*}
    \| \Gamma_{\Omega}(t, x, \cdot) \|_{\fsL^1(\Omega)} \le 1, \qquad
    \| \Gamma_{\Omega}(t, x, \cdot) \|_{\fsL^p(\Omega)} \le C_{\Omega, T, d}\, t^{-d/(2p')},
    \qquad
    \| \Gamma_{\Omega}(t, x, \cdot) \|_{\fsL^\infty(\Omega)} \le C_{\Omega, T, d}\, t^{-d/2}.
  \end{equation*}
\end{lemma}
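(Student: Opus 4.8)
The plan is to obtain the $\fsL^p$ decay estimates from the classical machinery of Nash–Moser: an $\fsL^1$–$\fsL^\infty$ ultracontractivity bound for the heat semigroup, which in turn follows from a Nash inequality, which in turn follows from the Sobolev/Gagliardo–Nirenberg inequality on $\R^d$ transported to $\Omega$ via the extension operator $E$. First I would set up the semigroup picture: let $\ee^{t\Delta_\Omega}$ denote the heat semigroup on $\Omega$ with Dirichlet conditions on $\partial\Omega_d$ and Neumann conditions on $\partial\Omega_n$, so that $\Gamma_\Omega(t,\cdot,y)$ is its kernel. This semigroup is sub-Markovian: it is a contraction on every $\fsL^r(\Omega)$, $1\le r\le\infty$ (positivity and the fact that constants are supersolutions for the mixed problem), which immediately gives $\|\Gamma_\Omega(t,\cdot,y)\|_{\fsL^1(\Omega)}\le 1$ and, by duality in the $y$ variable, $\|\Gamma_\Omega(t,x,\cdot)\|_{\fsL^1(\Omega)}\le 1$.

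The core step is the Nash inequality on $\Omega$. For $u$ in the form domain (i.e.\ $u\in\fsH^1(\Omega)$ with $u|_{\partial\Omega_d}=0$), apply $E$ to get $Eu\in\fsH^1(\R^d)$ with $\|Eu\|_{\fsH^1(\R^d)}\le|||E|||\,\|u\|_{\fsH^1(\Omega)}$ and $\|Eu\|_{\fsL^1(\R^d)}\le|||E|||\,\|u\|_{\fsL^1(\Omega)}$; since $Eu=u$ on $\Omega$, the classical Nash inequality on $\R^d$,
\[
  \|v\|_{\fsL^2(\R^d)}^{2+4/d}\le C_d\,\|v\|_{\fsL^1(\R^d)}^{4/d}\,\|\nabla v\|_{\fsL^2(\R^d)}^2,
\]
applied to $v=Eu$ yields
\[
  \|u\|_{\fsL^2(\Omega)}^{2+4/d}\le C_d\,|||E|||^{2+4/d}\,\|u\|_{\fsL^1(\Omega)}^{4/d}\,\bigl(\|\nabla u\|_{\fsL^2(\Omega)}^2+\|u\|_{\fsL^2(\Omega)}^2\bigr).
\]
The $\|u\|_{\fsL^2(\Omega)}^2$ term is harmless because it only shifts the generator by a bounded constant, i.e.\ it costs a factor $\ee^{Ct}$ and explains the monotone dependence of $C_{\Omega,T,d}$ on $T$. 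Feeding this Nash inequality into the standard differential-inequality argument for $g(t):=\|\ee^{t\Delta_\Omega}u_0\|_{\fsL^2(\Omega)}^2$ with $u_0\in\fsL^1$, $\|u_0\|_{\fsL^1}=1$ (using $\tfrac{d}{dt}g(t)=-2\langle\ee^{t\Delta_\Omega}u_0,(-\Delta_\Omega)\ee^{t\Delta_\Omega}u_0\rangle$ and the $\fsL^1$-contractivity to keep $\|\ee^{t\Delta_\Omega}u_0\|_{\fsL^1}\le 1$), gives the ultracontractivity bound $\|\ee^{t\Delta_\Omega}\|_{\fsL^1\to\fsL^2}\le C_{\Omega,T,d}\,t^{-d/4}$ for $t\in(0,T)$; duality and the semigroup property upgrade this to $\|\ee^{t\Delta_\Omega}\|_{\fsL^1\to\fsL^\infty}\le C_{\Omega,T,d}\,t^{-d/2}$.

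Finally I would read off the kernel bounds. The $\fsL^1\to\fsL^\infty$ bound is exactly $\|\Gamma_\Omega(t,\cdot,y)\|_{\fsL^\infty(\Omega)}=\|\ee^{t\Delta_\Omega}\delta_y\|_{\fsL^\infty}\le C_{\Omega,T,d}\,t^{-d/2}$ (made rigorous by approximating $\delta_y$ with normalized bumps), and by the symmetry $\Gamma_\Omega(t,x,y)=\Gamma_\Omega(t,y,x)$ of the self-adjoint mixed semigroup the same holds for $\|\Gamma_\Omega(t,x,\cdot)\|_{\fsL^\infty}$. The intermediate $\fsL^p$ bounds then follow by interpolating between $\fsL^1$ and $\fsL^\infty$: $\|\Gamma_\Omega(t,\cdot,y)\|_{\fsL^p(\Omega)}\le\|\Gamma_\Omega(t,\cdot,y)\|_{\fsL^1}^{1/p}\,\|\Gamma_\Omega(t,\cdot,y)\|_{\fsL^\infty}^{1/p'}\le C_{\Omega,T,d}\,t^{-d/(2p')}$, and symmetrically in the other variable. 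The main obstacle is not any single inequality but the bookkeeping around the mixed (Dirichlet/Neumann) boundary conditions: one must check that the semigroup is genuinely $\fsL^1$-contractive and self-adjoint in this mixed setting and that the extension operator $E$ respects the Dirichlet constraint $u|_{\partial\Omega_d}=0$ (so that $E$ maps the form domain to $\fsH^1(\R^d)$ with the stated norm), since the whole chain above runs through that hypothesis; the rest is the textbook Nash–Moser argument.
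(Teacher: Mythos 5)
Your proposal is correct and follows essentially the same route as the paper: $\fsL^1$-contractivity from positivity and the mixed boundary conditions, a Nash inequality on $\Omega$ transported from $\R^d$ via the extension operator (with the lower-order $\|u\|_{\fsL^2}^2$ term absorbed into an $\ee^{Ct}$ factor, explaining the monotone $T$-dependence), a Nash–Moser differential inequality on the $\fsL^2$ norm, then self-adjointness and the semigroup property for the $\fsL^\infty$ bound and interpolation for $\fsL^p$. The only cosmetic difference is that you work at the abstract semigroup/ultracontractivity level while the paper applies the same differential inequality directly to $\|\Gamma_\Omega(t,\cdot,y)\|_{\fsL^2}^2$, but the mathematical content is identical.
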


\begin{proof}
  By the construction of the heat kernel, we have for all \(t>0\) that
  \(\Gamma_\Omega \ge 0\) (thanks to the minimum principle) and
  \(\|\Gamma_\Omega(t,\cdot,y)\|_{\fsL^1(\Omega)} \le 1\) (since \(\vec n \cdot \nabla \Gamma_{\Omega}|_{\partial\Omega_d} \le 0\),
  where $\vec n$ is the outward normal vector at a point of $\partial\Omega$).

  Moreover, we find the
  dissipation for the evolution of the square of the $L^2$ norm:
  \begin{equation*}
    \frac12\, \frac{\dd}{\dd t}
    \| \Gamma_{\Omega}(t,\cdot,y) \|_{\fsL^2(\Omega)}^2
    = - \| \nabla \Gamma_{\Omega}(t,\cdot,y) \|_{\fsL^2(\Omega)}^2.
  \end{equation*}
  The Nash inequality states that there exists a constant
  \(C_{\mathrm{Nash},d} >0\) only depending on $d$ such that for any function
  \(v\) over \(\R^d\),
  \begin{equation*}
    \| v \|_{\fsL^2(\R^{d})}^{1+\frac 2d}
    \le C_{\mathrm{Nash},d}\, \| \nabla v \|_{\fsL^2(\R^d)} \,
    \| v \|_{\fsL^1(\R^d)}^{ \frac2d }.
  \end{equation*}
  Using the extension operator \(E\), we end up with
  \begin{equation*}
    \begin{aligned}
      \| \Gamma_{\Omega} \|_{\fsL^2(\Omega)}^{1+\frac 2d}
      &\le  \|E \Gamma_{\Omega} \|_{\fsL^2(\R^d)}^{1+\frac 2d}  \\
      &\le C_{\mathrm{Nash},d}\,   \| \nabla(E \Gamma_{\Omega}) \|_{\fsL^2(\R^d)} \,  \|E \Gamma_{\Omega} \|_{\fsL^1(\R^d)}^{\frac2d} \\
      &\le C_{\mathrm{Nash},d}\,  ||| E |||^{1 + \frac2d}\, \sqrt{ \| \Gamma_{\Omega} \|_{\fsL^2(\Omega)}^2 +  \| \nabla \Gamma_{\Omega} \|_{\fsL^2(\Omega)}^2 } \,  \|\Gamma_{\Omega} \|_{\fsL^1(\Omega)}^{\frac2d} ,
    \end{aligned}
  \end{equation*}
  so that, using the estimate \(\|\Gamma_\Omega(t,\cdot,y)\|_{\fsL^1(\Omega)} \le 1\), we end up with
  \begin{equation*}
    \| \nabla \Gamma_{\Omega} \|_{\fsL^2(\Omega)}^2
    \ge C''\,    \| \Gamma_{\Omega} \|_{\fsL^2(\Omega)}^{2+\frac4d}  -  \| \Gamma_{\Omega} \|_{\fsL^2(\Omega)}^2 ,
  \end{equation*}
  with $C'' := C_{\mathrm{Nash},d}^{-2}\, |||E |||^{-2 - \frac4d}$, and finally
  \begin{equation*}
    \frac12\, \frac{\dd}{\dd t}
    \| \Gamma_{\Omega}(t,\cdot,y) \|_{\fsL^2(\Omega)}^2
    \le -C''\,
    \| \Gamma_{\Omega}(t,\cdot,y)  \|_{\fsL^2(\Omega)}^{ 2+\frac4d }  + \,   \| \Gamma_{\Omega}(t,\cdot,y)  \|_{\fsL^2(\Omega)}^{2} .
  \end{equation*} 
  Denoting $q(t) :=  \bigg( \| \Gamma_{\Omega}(t,\cdot,y)  \|_{\fsL^2(\Omega)}^{ 2} \bigg)^{- \frac2d}$, we see therefore that
  $$ q'(t) \ge \frac{4}{d} \, (C'' - q(t)), $$
  which implies
  $$ q(t) \ge C''\, \frac{4}{d}  \, t\, \ee^{- \frac4d\, t} , $$
  so that in the end
  $$ \| \Gamma_{\Omega}(t,\cdot,y)  \|_{\fsL^2(\Omega)}^{ 2} \le \left( C''\, \frac{4}{d}  \right)^{- \frac{d}2} \, \ee^{2T}\, t^{-\frac{d}2}. $$

  This yields the claimed \(\fsL^2\) bound. 
  \medskip

  For the \(\fsL^{\infty}\) bound, note first that the Laplace operator is self-adjoint, so
  that \(\Gamma_{\Omega}(t,x,y) = \Gamma_{\Omega}(t,y,x)\).  By the semigroup property, we therefore
  find that
  \begin{align*}
    \Gamma_{\Omega}(t,x,y) &= \int_{\Omega} \Gamma_{\Omega}(t/2,x,z) \, \Gamma_{\Omega}(t/2,z,y)\, \dd z\\
                 &\le \| \Gamma_{\Omega}(t/2,x,\cdot) \|_{\fsL^2(\Omega)}\; \| \Gamma_{\Omega}(t/2,\cdot,y) \|_{\fsL^2(\Omega)}\\
                 & \le \| \Gamma_{\Omega}(t/2,\cdot,x) \|_{\fsL^2(\Omega)}\; \| \Gamma_{\Omega}(t/2,\cdot,y) \|_{\fsL^2(\Omega)},
  \end{align*}
  and we use the already established \(\fsL^2\) bound to conclude. 
  \medskip

  A direct interpolation between the $\fsL^1$ and $\fsL^{\infty}$ estimates yields
  the $\fsL^p$ estimate stated in the Lemma.  Finally, the self-adjointness
  argument leads to the second group of estimates.
\end{proof}

In the context of the domains appearing in the proof of
\cref{thm:oscillation-decay-boundary} (but keeping the notations of Lemma \ref{thm:heat-kernel-upper-bound}), we write down for the related
geometry the following statement:

\begin{lemma}\label{thm:heat-kernel-phi-bound}
  Fix \(T>0\).  Then there exists a constant \(C^*_{T,d}\) such that for any domain
  \begin{equation*}
    \Omega := B_d(0,R) \cap \{ (x',x_d) \in \R^d : x_d > \phi(x') \}
  \end{equation*}
  for \(R>0\) and \(\phi : \R^{d-1} \to \R\) with \(\| \nabla \phi \|_{\infty} \le \frac{1}{11d}\) with
  \(\partial\Omega_n = B_d(0,R) \cap \{ x',x_d) \in \R^d : x_d = \phi(x') \}\) and
  \(\partial\Omega_d = \partial\Omega \setminus \partial\Omega_n\) the constant \(C_{\Omega,T,d}\) appearing in
  \cref{thm:heat-kernel-upper-bound} is bounded by \(C^*_{T,d}\).
\end{lemma}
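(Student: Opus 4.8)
The plan is to reduce everything to a uniform bound on the norm of a suitable extension operator, since \cref{thm:heat-kernel-upper-bound} already tells us that $C_{\Omega,T,d}$ depends on $\Omega$ \emph{only} through the quantity $|||E|||$, and that it is monotone increasing in it (this monotonicity is visible from the proof of \cref{thm:heat-kernel-upper-bound}, where the decisive constant is $C'' = C_{\mathrm{Nash},d}^{-2}\,|||E|||^{-2-4/d}$, and the final $\fsL^p$ and $\fsL^\infty$ bounds are built from $(C''\,\tfrac4d)^{-d/2}\ee^{2T}$, which increases with $|||E|||$). Thus it suffices to construct, for every admissible $\Omega$, an extension operator $E$ on $\{u \in \fsH^1(\Omega): u|_{\partial\Omega_d} = 0\}$, bounded in $\fsH^1$ and in $\fsL^1$, with $|||E|||$ controlled by a constant depending only on $d$, in particular independently of $R$ and of $\phi$. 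Then $C^*_{T,d}$ can simply be taken to be the value of the constant from \cref{thm:heat-kernel-upper-bound} evaluated at that uniform bound on $|||E|||$.

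First I would construct $E$ in two elementary steps. Given $u \in \fsH^1(\Omega)$ with $u|_{\partial\Omega_d} = 0$, note that $\partial\Omega_d = \partial B_d(0,R) \cap \{x_d \ge \phi(x')\}$ is precisely the spherical part of $\partial\Omega$, the graph part being $\partial\Omega_n$; hence extending $u$ by zero to the (unbounded) half-type domain $\Omega_+ := \{(x',x_d): x_d > \phi(x')\}$, by setting it to $0$ on $\Omega_+ \setminus \overline{B_d(0,R)}$, produces $\tilde u \in \fsH^1(\Omega_+) \cap \fsL^1(\Omega_+)$ with $\|\tilde u\|_{\fsH^1(\Omega_+)} = \|u\|_{\fsH^1(\Omega)}$ and $\|\tilde u\|_{\fsL^1(\Omega_+)} = \|u\|_{\fsL^1(\Omega)}$, because across the interface $\partial B_d(0,R) \cap \Omega_+$ the trace of $u$ vanishes (this is where the Dirichlet condition is used). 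Then I would apply the reflection operator $E_2$ of \cref{l6} associated with the graph of $\phi$ to $\tilde u$, which is legitimate since $\tilde u \in \fsH^1(\Omega_+)$ is compactly supported, obtaining $Eu := E_2 \tilde u \in \fsH^1(\R^d)\cap \fsL^1(\R^d)$ with
$$
\|E u\|_{\fsH^1(\R^d)} \le 2\sqrt{1 + \|\nabla\phi\|_\infty^2}\;\|u\|_{\fsH^1(\Omega)},
\qquad
\|E u\|_{\fsL^1(\R^d)} \le 2\,\|u\|_{\fsL^1(\Omega)}.
$$
Using the hypothesis $\|\nabla\phi\|_\infty \le \tfrac{1}{11d}$ this gives $|||E||| \le 2\sqrt{1 + (11d)^{-2}} \le 2\sqrt{2}$, a bound depending only on $d$, which is exactly what is needed.

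The main thing to be careful about, rather than a real obstacle, is the first step: one must genuinely verify that zero-extension across the spherical portion of $\partial\Omega$ remains in $\fsH^1$, which relies entirely on the vanishing-trace (Dirichlet) condition there, and that \emph{both} elementary operations have operator norms insensitive to the radius $R$ — it is this $R$-independence, together with the smallness of $\|\nabla\phi\|_\infty$, that yields a bound uniform in the family of domains. The geometric assumptions on $\phi$ (including $\phi \le \tfrac{R}{11d}$ in the setting of \cref{thm:oscillation-decay-boundary}) ensure $\Omega$ is a bona fide piecewise-$\fsC^1$ (Lipschitz) domain with the boundary splitting cleanly as above so that the traces involved are well defined; the possible failure of $\phi$ to be $\fsC^1$ is handled either by a short mollification argument in $\phi$ or by observing that the computation in the proof of \cref{l6} goes through verbatim for Lipschitz $\phi$. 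Combining the uniform bound on $|||E|||$ with the monotonicity of $C_{\Omega,T,d}$ in $|||E|||$ then yields $C_{\Omega,T,d} \le C^*_{T,d}$ and proves \cref{thm:heat-kernel-phi-bound}.
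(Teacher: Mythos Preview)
Your proposal is correct and follows essentially the same approach as the paper: extend by zero across the Dirichlet (spherical) part of the boundary to the half-space-type domain $\Omega_+$, then apply the reflection operator $E_2$ from \cref{l6}, obtaining $|||E||| \le 2\sqrt{1+(11d)^{-2}}$ uniformly in $R$ and $\phi$. The paper's proof is in fact slightly terser and omits the explicit discussion of monotonicity of $C_{\Omega,T,d}$ in $|||E|||$ that you include.
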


\begin{proof}
  For the domain considered in the statement, we need an extension operator
  \(E\) for functions $f \in \fsH^1(\Omega)$ such that $f|_{\partial\Omega_d} = 0$, for the norms
  \(\fsH^1\) and \(\fsL^1\).

  We can first extend $f$ to \(\Omega_+ := \{ (x',x_d) \in \R^d : x_d > \phi(x') \}\) by
  setting $E_1f =0$ on \(\Omega_+\setminus \Omega\), and $E_1f =f$ on $\Omega$. Recalling that
  $ f|_{\partial\Omega_d} = 0$, we see that $||E_1||_{H^1 \to H^1} = 1$ and
  $||E_1||_{L^1 \to L^1} = 1$.  We then use \cref{l6} (with $\Omega$ in this lemma
  corresponding to $\Omega_+$ here) to build the operator $E_2$ from $H^1(\Omega_+)$ to
  $H^1(\R^d)$, so that still thanks to \cref{l6},
  $||| E_2\,E_1 ||| = ||| E_2 ||| \le 2 \sqrt{1+\frac{1}{121d^2}}$, and we
  conclude the Lemma by setting $E := E_2\,E_1$.
\end{proof}

We now write down an estimate which is a direct consequence of
\cref{thm:heat-kernel-upper-bound}, and which will be used several times in the
sequel.

\begin{lemma}\label{corl}
  Let $\Omega$ be a piecewise - $\fsC^1$ domain of $\R^d$, and suppose that there
  exists \(E\) an extension operator from \(\Omega\) to \(\R^d\) for $\fsH^1$ and
  $\fsL^1$ functions. We consider $f \in \fsL^p([0,T]; \fsL^q(\Omega))$, with
  $p,q \in [1,\infty]$ and $\frac{2}{p} + \frac{d}q < 2$.  Then, for $t\in [0,T]$,
  $x\in \Omega$, $a>0$,
  \begin{equation*}
    \bigg| \int_0^t \int_{\Omega}  \Gamma_{\Omega} (\frac{t-s}a, x,y) \, f(s,y)\, \dd y\, \dd s \bigg|
    \le C_{\Omega, \frac{T}a, d} \, a^{\frac{d}{2q}} \,
    \left( \frac{T^{1 - \frac{d}2\frac{p'}{q}}}{1 - \frac{d}2\frac{p'}{q}} \right)^{\frac1{p'}} \,
    \|f\|_{\fsL^p([0,T]; \fsL^q(\Omega))} .
  \end{equation*}
\end{lemma}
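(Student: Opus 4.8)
The plan is to estimate the space-time convolution by H\"older's inequality in both the space and time variables, using the $\fsL^{p'}$--$\fsL^q$ bounds on the heat kernel from \cref{thm:heat-kernel-upper-bound} after the rescaling $s \mapsto (t-s)/a$.

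First I would fix $t \in [0,T]$ and $x \in \Omega$, and apply H\"older's inequality in $y$ with exponents $q'$ and $q$ to the inner integral:
\begin{equation*}
  \left| \int_{\Omega} \Gamma_{\Omega}\!\left(\tfrac{t-s}{a}, x, y\right) f(s,y)\, \dd y \right|
  \le \left\| \Gamma_{\Omega}\!\left(\tfrac{t-s}{a}, x, \cdot\right) \right\|_{\fsL^{q'}(\Omega)}\, \| f(s, \cdot) \|_{\fsL^q(\Omega)}.
\end{equation*}
For the kernel factor I would invoke the second group of estimates in \cref{thm:heat-kernel-upper-bound} (the bound on $\| \Gamma_{\Omega}(\tau, x, \cdot) \|_{\fsL^{q'}(\Omega)}$), with $\tau = (t-s)/a$. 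Since $\tau$ ranges over $(0, T/a)$, the relevant constant is $C_{\Omega, T/a, d}$, and the bound reads $\| \Gamma_{\Omega}(\tau, x, \cdot) \|_{\fsL^{q'}(\Omega)} \le C_{\Omega, T/a, d}\, \tau^{-d/(2q)}$ (here I use $(q')' = q$, so the exponent in \cref{thm:heat-kernel-upper-bound} applied with "$p = q'$" gives $-d/(2 ((q')')) = -d/(2q)$). Substituting $\tau = (t-s)/a$ produces a factor $a^{d/(2q)} (t-s)^{-d/(2q)}$.

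Next I would apply H\"older's inequality in $s$ over $(0,t) \subset (0,T)$ with exponents $p'$ and $p$:
\begin{equation*}
  \int_0^t (t-s)^{-d/(2q)}\, \| f(s,\cdot) \|_{\fsL^q(\Omega)}\, \dd s
  \le \left( \int_0^t (t-s)^{-\frac{d p'}{2q}}\, \dd s \right)^{1/p'} \| f \|_{\fsL^p([0,T]; \fsL^q(\Omega))}.
\end{equation*}
The time integral is finite precisely because the condition $\frac2p + \frac dq < 2$ is equivalent to $1 - \frac1{p'} > \frac{d}{2q}$, i.e.\ $\frac{d p'}{2q} < 1$; computing it gives $\int_0^t (t-s)^{-d p'/(2q)}\, \dd s = \frac{t^{1 - d p'/(2q)}}{1 - d p'/(2q)} \le \frac{T^{1 - d p'/(2q)}}{1 - d p'/(2q)}$, using $t \le T$. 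Collecting the constant $C_{\Omega, T/a, d}$, the factor $a^{d/(2q)}$, and this time factor raised to the power $1/p'$ yields exactly the claimed bound. The case $p = \infty$ or $q = \infty$ is handled by the usual convention (then $p' = 1$ or the kernel bound uses the $\fsL^\infty$/$\fsL^1$ endpoints directly), and one checks the formula degenerates correctly.

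I do not expect a serious obstacle here; the only point requiring a little care is bookkeeping the conjugate exponents — making sure that applying \cref{thm:heat-kernel-upper-bound} with the space integrability exponent $q'$ gives the decay rate $t^{-d/(2q)}$ and not $t^{-d/(2q')}$ — and confirming that the finiteness condition on the time integral is exactly $\frac2p + \frac dq < 2$. Both are immediate once written out, so the proof is essentially a two-step H\"older argument with the kernel estimates plugged in.
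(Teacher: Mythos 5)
Your proposal matches the paper's proof step for step: Hölder in \(y\) with exponents \(q'\), \(q\); the heat-kernel bound from \cref{thm:heat-kernel-upper-bound} applied with integrability \(q'\), giving decay \(((t-s)/a)^{-d/(2q)}\) and producing the factor \(a^{d/(2q)}\); Hölder in \(s\) with exponents \(p'\), \(p\); and the elementary time integral, finite exactly when \(\frac{dp'}{2q}<1\), which is indeed \(\frac2p+\frac dq<2\). One small slip in your exponent bookkeeping: the intermediate rewriting should read \(\frac1{p'} > \frac{d}{2q}\) (equivalently \(1-\frac1p > \frac d{2q}\)), not \(1-\frac1{p'}>\frac{d}{2q}\), but the condition you ultimately invoke, \(\frac{dp'}{2q}<1\), is correct and the proof is sound.
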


\begin{proof}
  We compute
  \begin{equation*}
    \begin{split}
      \bigg| \int_0^t \int_{\Omega}  \Gamma_{\Omega} (\frac{t-s}a, x,y) \, f(s,y)\, \dd y\, \dd s \bigg|
      &\le
        \int_0^t  \| \Gamma_{\Omega} (\frac{t-s}a, x, \cdot) \|_{\fsL^{q'}(\Omega)} \, \|f(s,\cdot\,)\|_{\fsL^{q}(\Omega)} \, \dd s \\
      &\le C_{\Omega, \frac{T}a, d}  \int_0^t \left( \frac{t-s}a \right)^{ - \frac{d}{2q} } \, \|f(s,\cdot\,)\|_{\fsL^{q}(\Omega)} \, \dd s \\
      &\le C_{\Omega, \frac{T}a, d} \, a^{\frac{d}{2q} } \,
        \bigg[  \int_0^t ( {t-s} )^{ - \frac{d p'}{2q} } \,\dd s\bigg]^{\frac1{p'}}
        \, \bigg( \int_0^t \, \|f(s,\cdot\,)\|_{\fsL^{q}(\Omega)}^p \, \dd s \bigg)^{\frac1p} \\
      &\le C_{\Omega, \frac{T}a, d} \, a^{\frac{d}{2q} } \,
  \left( \frac{T^{1 - \frac{d}2\frac{p'}{q}}}{1 - \frac{d}{q}\frac{p'}{2}} \right)^{\frac1{p'}}  \,
        \|f\|_{\fsL^p([0,T]; \fsL^q(\Omega))} .\qedhere
    \end{split}
  \end{equation*}
\end{proof}

For treating  small times, we will also need the following moment bound.

\begin{lemma}\label{thm:heat-kernel-moment-bound}
  Assume the setup of \cref{thm:heat-kernel-upper-bound}.  For \(1/2>\epsilon>0\),
  there exists a constant \(\tilde{C}_{\epsilon, T} >0\) depending on $\epsilon$, $T$, $d$ and
  $\Omega$ only through $|\Omega|$ and $ |||E |||$, and increasing with respect to $T$,
  such that when $t\in [0,T]$ and $y\in \Omega$,
  \begin{equation*}
    m(t,y) := \int_{\Omega} |x-y|\, \Gamma_{\Omega}(t,x,y)\, \dd x \le \tilde{C}_{\epsilon, T}\,t^{\frac 12 - \epsilon}.
  \end{equation*} 
\end{lemma}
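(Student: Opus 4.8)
The plan is to reduce the estimate to a time‑integrated bound on $\|\nabla_x\Gamma_\Omega(s,\cdot,y)\|_{\fsL^1(\Omega)}$, and then to exploit the concentration of the heat kernel at scale $\sqrt s$ around its pole $y$.

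Since $\Gamma_\Omega(s,\cdot,y)\ge 0$ with $\|\Gamma_\Omega(s,\cdot,y)\|_{\fsL^1(\Omega)}\le 1$ and $\Gamma_\Omega(s,\cdot,y)\to\delta_y$ as $s\downarrow 0$, while $|x-y|$ vanishes at $x=y$, one has $m(s,y)\to 0$ as $s\downarrow 0$. For $s>0$, where $\Gamma_\Omega$ is smooth by parabolic regularity, I would differentiate $m$ in $s$ and integrate by parts once:
\[
  \frac{\dd}{\dd s}m(s,y)=\int_\Omega|x-y|\,\Delta_x\Gamma_\Omega(s,x,y)\,\dd x
  =\int_{\partial\Omega}|x-y|\,(\vec n\cdot\nabla_x\Gamma_\Omega)\,\dd\sigma-\int_\Omega\frac{x-y}{|x-y|}\cdot\nabla_x\Gamma_\Omega\,\dd x.
\]
The boundary integral vanishes on $\partial\Omega_n$ by the Neumann condition, and on $\partial\Omega_d$ one has $\Gamma_\Omega=0$, hence $\vec n\cdot\nabla_x\Gamma_\Omega\le 0$ (Hopf, using $\Gamma_\Omega\ge 0$ inside), so that piece is $\le 0$. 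Therefore $\frac{\dd}{\dd s}m(s,y)\le\|\nabla\Gamma_\Omega(s,\cdot,y)\|_{\fsL^1(\Omega)}$ and, integrating from $0$, $m(t,y)\le\int_0^t\|\nabla\Gamma_\Omega(s,\cdot,y)\|_{\fsL^1(\Omega)}\,\dd s$.

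Next I would record the energy bounds $\|\Gamma_\Omega(s,\cdot,y)\|_{\fsL^2(\Omega)}\le C\,s^{-d/4}$ (as in \cref{thm:heat-kernel-upper-bound}) and, using $\Gamma_\Omega(s,\cdot,y)=\ee^{(s/2)\Delta}\Gamma_\Omega(s/2,\cdot,y)$ together with the spectral bound $\|\nabla\ee^{\tau\Delta}\|_{\fsL^2\to\fsL^2}\le(2\ee\tau)^{-1/2}$, also $\|\nabla\Gamma_\Omega(s,\cdot,y)\|_{\fsL^2(\Omega)}\le C\,s^{-d/4-1/2}$, with $C=C(d,\tnorm{E},T)$. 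These alone are not enough: $\|\nabla\Gamma_\Omega(s)\|_{\fsL^1}\le|\Omega|^{1/2}\|\nabla\Gamma_\Omega(s)\|_{\fsL^2}\le C|\Omega|^{1/2}s^{-d/4-1/2}$ is not integrable in $s$ once $d\ge 2$, since this wastes the concentration. The extra ingredient is a Davies-type off-diagonal $\fsL^2$ estimate: for any Lipschitz $\psi$ with $|\nabla\psi|\le\alpha$ the conjugated semigroup satisfies $\|\ee^{\psi}\ee^{s\Delta}\ee^{-\psi}\|_{\fsL^2(\Omega)\to\fsL^2(\Omega)}\le\ee^{\alpha^2 s}$, and the weighted energy identity behind it produces only boundary terms of the two types already handled (they cancel on $\partial\Omega_n$ and vanish on $\partial\Omega_d$), so the mixed boundary condition is harmless; combined with the on‑diagonal bound this gives Gaussian-type control $\|\Gamma_\Omega(s,\cdot,y)\|_{\fsL^2(\Omega\setminus B_d(y,r))}\le C\,s^{-d/4}\ee^{-cr^2/s}$ and likewise for the gradient with $s^{-d/4-1/2}$. (If one is willing to run the full Davies machinery one even gets the pointwise bound $\Gamma_\Omega(s,x,y)\le C s^{-d/2}\ee^{-c|x-y|^2/s}$, from which $m(t,y)\le C t^{1/2}$ directly; the route below is meant to be cheaper and already yields the stated $t^{1/2-\epsilon}$.)

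To conclude I would, for each $s$, split $\Omega=B_d(y,r)\cup(\Omega\setminus B_d(y,r))$ in the last integral: on the ball, Cauchy--Schwarz with the $\fsL^2$ gradient bound gives a contribution $\le C\,r^{d/2}s^{-d/4-1/2}$; on the complement, $|\Omega|^{1/2}$ times the off-diagonal gradient bound gives $\le C|\Omega|^{1/2}s^{-d/4-1/2}\ee^{-cr^2/s}$. Choosing $r\sim\sqrt{s\,\max(1,\log(1/s))}$ to balance $r^{d/2}$ against $|\Omega|^{1/2}\ee^{-cr^2/s}$ yields $\|\nabla\Gamma_\Omega(s,\cdot,y)\|_{\fsL^1(\Omega)}\le C\,s^{-1/2}\big(1+\log^{+}(1/s)\big)^{d/4}$ for all $s\in(0,T]$. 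Integrating over $(0,t)$ gives $m(t,y)\le C\,t^{1/2}\big(1+\log^{+}(1/t)\big)^{d/4}$, and since $t^{\epsilon}\big(1+\log^{+}(1/t)\big)^{d/4}$ is bounded on $(0,T]$ for any $\epsilon>0$, this is $\le\tilde C_{\epsilon,T}\,t^{1/2-\epsilon}$ with $\tilde C_{\epsilon,T}$ depending only on $\epsilon$, $T$, $d$, $|\Omega|$ and $\tnorm{E}$, as claimed.

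The main obstacle is the off-diagonal step: the two a priori energy bounds do not by themselves see that $\Gamma_\Omega(s,\cdot,y)$ lives essentially on $B_d(y,\sqrt s)$, and without this the time integral of $\|\nabla\Gamma_\Omega(s,\cdot,y)\|_{\fsL^1}$ diverges in dimension $\ge 2$. Making the Davies--Gaffney argument quantitative while carrying along the Neumann part of the boundary, and then converting the weighted $\fsL^2$ estimate into an $\fsL^1$ bound on the gradient through the near/far decomposition — which is exactly where the logarithmic factor, hence the loss $t^{-\epsilon}$, creeps in — is the technical heart of the proof.
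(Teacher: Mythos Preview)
Your proof is correct in outline but takes a genuinely different route from the paper. Both arguments begin the same way, reducing to $\frac{\dd}{\dd s}m(s,y)\le\int_\Omega|\nabla\Gamma_\Omega(s,\cdot,y)|\,\dd x$ via one integration by parts and the sign of the Dirichlet flux. From there, you invoke Davies--Gaffney off-diagonal $\fsL^2$ estimates to capture the concentration of $\Gamma_\Omega(s,\cdot,y)$ at scale $\sqrt s$, then split near/far and balance. The paper instead follows Nash: it introduces the entropy-type functional $H=\int_\Omega\Gamma_\Omega\ln(1+\Gamma_\Omega)$, whose dissipation controls $\int_\Omega|\nabla\Gamma_\Omega|^2/(1+\Gamma_\Omega)$, and then uses Cauchy--Schwarz in the form $\int_\Omega|\nabla\Gamma_\Omega|\le\big(\int_\Omega|\nabla\Gamma_\Omega|^2/(1+\Gamma_\Omega)\big)^{1/2}\big(\int_\Omega(1+\Gamma_\Omega)\big)^{1/2}$, so that the $\fsL^1$ bound on $\Gamma_\Omega$ and the $\fsL^\infty$ bound (to control $H$) from \cref{thm:heat-kernel-upper-bound} suffice. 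The paper's approach is entirely self-contained within the lemmas already proven and avoids setting up the conjugated-semigroup machinery; your approach is heavier but more robust, and in fact yields a slightly sharper intermediate bound ($t^{1/2}$ up to a logarithm, rather than the paper's direct $t^{1/2-\epsilon}$). Both routes are standard in the heat-kernel literature, and the paper's \cref{thm:heat-kernel-moment-bound} remark explicitly acknowledges that Nash's original argument removes the $\epsilon$.
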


\begin{remark}
  Using a more careful argument as in
 \textcite{nash-1958-continuity} allows to remove
  the \(\epsilon\)  in the inequality above.
\end{remark}

\begin{proof}
  We fix \(y\) and use for $t > 0$ the entropy-like functional
  \begin{equation*}
    H(t,y) = \int_{\Omega} \Gamma_{\Omega}(t,x,y) \ln (1 + \Gamma_{\Omega}(t,x,y))\, \dd x.
  \end{equation*}
  By the \(\fsL^{1}\) and \(\fsL^{\infty}\) bound obtained in \cref{thm:heat-kernel-upper-bound}, we have that
  \begin{equation*}
    H(t,y) \le   \bigg[ \int_{\Omega} \Gamma_{\Omega}(t,x,y)  \, \dd x  \bigg]\, \,\ln (1 + C_{\Omega, T,d}\, t^{- \frac{d}{2}})
    \le  \tilde{C}_{\epsilon, T}\, (1 +  t^{-2\epsilon}).
  \end{equation*}
  Its dissipation is
  \begin{equation*}
    \frac{\dd}{\dd t} H = - \int_{\Omega} |\nabla \Gamma_{\Omega}|^2 \, \left( \frac1{(1+\Gamma_{\Omega})^2} +  \frac1{1+\Gamma_{\Omega}} \right)\, \dd x  ,
  \end{equation*}
  since
  $$ \int_{\partial\Omega_d \cup \partial\Omega_n }  \left( \frac{\Gamma_{\Omega}}{1+ \Gamma_{\Omega}} + \ln ( 1 + \Gamma_{\Omega}) \right)\,
  \nabla \Gamma_{\Omega} \cdot \vec n(x)\, \dd \sigma(x) = 0 . $$
  Hence we find that
  \begin{equation*}
    \frac{\dd}{\dd t} \left(  t^{4\epsilon} \, H \right)
    = - t^{4\epsilon}  \int_{\Omega} |\nabla \Gamma_{\Omega}|^2 \, \left( \frac1{(1+\Gamma_{\Omega})^2} +  \frac1{1+\Gamma_{\Omega}} \right)\, \dd x    + 4\epsilon  t^{4\epsilon -1} \, H,
  \end{equation*}
  so that (changing the definition of $\tilde{C}_{\epsilon,T}$),
  \begin{equation*}
    \begin{split}
      {t}^{4\epsilon} \, H(  { t} )
      + \int_0^{ t } s^{4\epsilon}  \int_{\Omega} |\nabla \Gamma_{\Omega}|^2 \, \left( \frac1{(1+\Gamma_{\Omega})^2} +  \frac1{1+\Gamma_{\Omega}} \right) (s) \, \dd x\, \dd s
      &\le  \int_0^t 4\epsilon\, s^{4\epsilon -1} \, \tilde{C}_{\epsilon, T} \, (1 + s^{-2\epsilon})\, \dd s \\
      &\le  \tilde{C}_{\epsilon, T}\,   ({ t}^{4\epsilon} + t^{2\epsilon}),
    \end{split}
  \end{equation*}
  and finally (since $t^{4\epsilon} \le T^{2\epsilon}\, t^{2\epsilon}$, and again changing the definition
  of $\tilde{C}_{\epsilon,T}$),
  \begin{equation*}
    \int_0^{ t} s^{4\epsilon} \int_{\Omega} \frac{|\nabla \Gamma_{\Omega}|^2}{1+\Gamma_{\Omega}}(s)\, \dd x\, \dd s
    \le  \tilde{C}_{\epsilon, T}\,   t^{2\epsilon} .
  \end{equation*}

  We observe then that (remembering that \(\vec n \cdot \nabla \Gamma_{\Omega}|_{\partial\Omega_d} \le 0\))
  \begin{align*}
    \frac{\dd}{\dd t} m
    &= \int_{\Omega} |x-y|\; \Delta \Gamma_{\Omega}\,\dd x \\
    &
      \le - \int_{\Omega} \nabla |x-y| \cdot \nabla \Gamma_{\Omega}\,\dd x  \\
    &
      \le \left( \int_{\Omega} \frac{|\nabla \Gamma_{\Omega}|^2}{ 1 + \Gamma_{\Omega}}\, \dd x \right)^{1/2}
      \left( \int_{\Omega} (1 + \Gamma_{\Omega} ) \, \dd x \right)^{1/2},
  \end{align*}
  since $| \nabla |x-y| |^2  \le 1$.

  Integrating the time derivative yields then the claimed bound. Indeed, using
  Cauchy-Schwarz inequality, and recalling the $\fsL^1$ bound obtained in
  \cref{thm:heat-kernel-upper-bound},
  \begin{equation*}
    \begin{split}
      m({ t})
      &\le  (1 + |\Omega|)^{\frac12} \, \int_0^{ t} s^{2\epsilon} \bigg(\int  \frac{|\nabla \Gamma_{\Omega}|^2}{1 + \Gamma_{\Omega}}(s) \, \dd x \,\bigg)^{\frac12} \,  s^{-2\epsilon} \, \dd s \\
      &\le  \bigg[ (1 + |\Omega|)\, t^{2\epsilon} \bigg]^{\frac12}\,
        \bigg[   \int_0^{ t}   s^{-4\epsilon} \, \dd s \bigg]^{1/2}
        \le   \tilde{C}_{\epsilon, T}\, t^{\frac 12 - \epsilon},
    \end{split}
  \end{equation*}
  where we changed one more time the definition of $\tilde{C}_{\epsilon, T}$ in the last inequality.
\end{proof}

\subsection{Decay of oscillations}
\label{sec:rough:comparison}

For the decay of oscillation, we obtain a new domain $U$ by cutting out a ball.
This reduced domain can have a part \(\partial U_n\) from the original domain where we
still have the Neumann boundary data.  We then have a subdomain \(U_r\) on which
we want to obtain a reduced oscillation.  Note that this subdomain typically can
hit or be close to the boundary part \(\partial U\).  For the reduction of the
oscillation, we consider a further subdomain \(U_e\) for which we assume that
the heat kernel starting from \(U_e\) is strictly positive in \(U_r\) after a
suitable time with a uniform bound.  \medskip

In this general setting, we formulate an abstract lemma for the decay of
oscillation.


\begin{lemma}\label{thm:abstract-decay}
  We consider a piecewise - $\fsC^1$ domain \(U \subset \R^d\) whose boundary is
  partitioned into \(\partial U_d\) and \(\partial U_n\) 
($\partial U_n$ can be empty).  We also
  consider subdomains \(U_r \subset U\) and \(U_e \subset U\), for which the fundamental
  solution of the heat equation \(\Gamma_{U}\) (with Dirichlet boundary condition
  along \(\partial U_d\) and Neumann boundary condition along \(\partial U_n\)), satisfies, for
  some given $T>0$, $q \in (1,\infty]$, and constants \(C_1>0\), \(C_2 < \infty\), the
  conditions
  \begin{equation} \label{abinf}
    \inf_{y \in U_e,x\in U_r}
    \inf_{t \in \left[\frac{T}{2c_0a_0},\frac{T}{a_0}\right]}
    \Gamma_{U}(t,x,y) \ge
    C_1,
  \end{equation}
  and for all $t \in [0, \frac{T}{a_0}]$,
  \begin{equation} \label{absup}
    \sup_{x \in U_r}   \| \Gamma_{U}(t,x,\cdot) \|_{\fsL^{q'}(U)} \le C_2\, t^{- \frac{d}{2q} } .
  \end{equation}
  Then, any solution \(w \in [0,1]\) to \eqref{eq:rough-scalar-pde}, \eqref{eq:assumption-a} over
  \([-T,0] \times U\) 
   and boundary condition
  \begin{equation*}
    \vec n \cdot \nabla_x w(t,x) = 0
    \qquad \text{for } (t,x) \in (-T,0) \times \partial U_n ,
  \end{equation*}
  is estimated (for \(p \in [1,\infty]\) with $p,q \in [1,\infty]$ and
  $\frac{2}{p} + \frac{d}q < 2$) as
  \begin{equation*}
    \oscillation_{[-\frac{T}2, 0] \times U_r} w
    \le 1 - \frac{C_1}{4} |U_e|
    +  C_2\, (a_0c_0)^{\frac{d}{2q}}\, (1 - \frac{dp'}{2q})^{-\frac1{p'}} \, T^{\frac1{p'} - \frac{d}{2q}}\,
    \|f\|_{\fsL^{p}([-T,0]; \fsL^{q}( U)) }  .
  \end{equation*}
\end{lemma}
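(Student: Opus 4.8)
The plan is to compare $w$ with the solution of the constant-coefficient heat equation and exploit the positivity of the heat kernel $\Gamma_U$ on $U_r$ at intermediate times. First I would reduce to a pure heat equation by a Duhamel-type argument: write $a\partial_t w - \Delta w = f$, so that formally $\partial_t w - a^{-1}\Delta w = a^{-1} f$, and the bounds \eqref{eq:assumption-a} place $a^{-1}$ in $[(c_0 a_0)^{-1}, a_0^{-1}]$. The natural thing is to run the evolution with a \emph{frozen} diffusion speed, say the fastest one $1/a_0$, and control the error coming from the variation of $a^{-1}$ using the sign assumption $\partial_t w \ge 0$. Concretely, one sets up the representation
\begin{equation*}
  w(0,x) \le \int_U \Gamma_U(\tfrac{T}{a_0}, x, y)\, w(-T, y)\, \dd y
  + \text{(forcing term involving } f\text{)},
\end{equation*}
where the inequality (rather than equality) is exactly where $\partial_t w \ge 0$ enters: slowing the diffusion down from $a^{-1}\Delta$ to $a_0^{-1}\Delta$ can only decrease $w$ at later times when $w$ is time-increasing and the kernel is order-preserving, and the Neumann part $\partial U_n$ contributes nothing by the boundary condition while the Dirichlet part $\partial U_d$ contributes favourably since $\vec n\cdot\nabla\Gamma_U|_{\partial U_d}\le 0$ and $w\ge 0$. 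The forcing term is estimated by \cref{corl} (applied with the time horizon $T/a_0$ and the parameter $a=a_0$, after using $a^{-1}\le a_0^{-1}$ on $f_+$ and $a^{-1}\ge (c_0a_0)^{-1}$ on $f_-$), which produces exactly the factor $C_2\,(a_0c_0)^{d/2q}\,(1-\frac{dp'}{2q})^{-1/p'}\,T^{1/p'-d/2q}$ via the hypothesis \eqref{absup}; note \eqref{absup} is the quantitative input that lets us bypass the full strength of \cref{thm:heat-kernel-upper-bound}.

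Next, to get the reduction of the supremum of $w$: normalise so that $\inf_{[-T,0]\times U_r} w =: m$ and estimate $\sup_{[-\frac T2,0]\times U_r} w$. For a point $z=(\tau,x)$ with $\tau\in[-\frac T2,0]$ and $x\in U_r$, run the comparison from an initial time $\tau - t$ with $t$ chosen in the window $[\frac{T}{2c_0a_0},\frac{T}{a_0}]$ — which is admissible since then $\tau - t \ge -\frac T2 - \frac{T}{a_0} \ge -T$ (using that we may as well assume $a_0\le 2$, or simply absorbing constants) — so that the kernel bound \eqref{abinf} applies with the time argument $t$ lying in that window. Splitting $w(\tau-t,\cdot) = (w(\tau-t,\cdot)-m) + m$ and using $\int_U\Gamma_U \le 1$ on the $m$-part and $w-m\le 1-m$ with $\Gamma_U\ge C_1$ on $U_e$ on the other part, one gets
\begin{equation*}
  w(\tau,x) \le m + (1-m)\Big(1 - C_1|U_e|\Big) + (\text{forcing}),
\end{equation*}
so $\sup w - m \le (1-m)(1-C_1|U_e|) + (\text{forcing})$. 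Since also $1-m \le 1$, a convexity/averaging step (or just bounding $1-m\le 1$ in the bad term and keeping it in the good term, then optimising) upgrades the factor $C_1|U_e|$ into $\frac{C_1}{4}|U_e|$ in the final oscillation bound — the factor $4$ is the slack one pays for passing from "$\sup$ over the half-cylinder minus $\inf$ over the full cylinder" to a genuine oscillation estimate and for the crude bound $1-m\le 1$. Subtracting $\inf_{[-\frac T2,0]\times U_r} w \ge m$ gives $\oscillation_{[-\frac T2,0]\times U_r} w \le 1 - \frac{C_1}{4}|U_e| + (\text{forcing})$, which is the claim once the forcing term is identified with the displayed expression.

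The main obstacle I expect is making the comparison inequality $w(0,x)\le \int_U\Gamma_U(\frac{T}{a_0},x,y)w(-T,y)\dd y + (\text{forcing})$ fully rigorous with the correct sign, i.e. cleanly justifying that freezing $a^{-1}$ to its largest value $a_0^{-1}$ produces an \emph{upper} bound for the time-increasing solution. The honest way is to test the equation $a\partial_t w - \Delta w = f$ against $\Gamma_U(\frac{T}{a_0}-s-\cdot\,,x,\cdot)$ (backward heat kernel with speed $1/a_0$) on $(-T,0)\times U$, integrate by parts in space — here the Neumann condition on $\partial U_n$ and the sign of $\vec n\cdot\nabla\Gamma_U$ on $\partial U_d$ combined with $w\ge0$ are used — and in the resulting identity replace the factor $a$ multiplying $\partial_t w \ge 0$ by $a_0$, which only decreases the left side; rearranging and using $\|\Gamma_U(t,x,\cdot)\|_{\fsL^1}\le1$ then yields the stated bound. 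Care is needed that all these manipulations are valid for the (presumably weak) solution $w$ with merely $\fsL^p\fsL^q$ forcing and only $\fsH^1$-type regularity, but this is routine once one works with the weak formulation and a density/approximation argument; I would not belabour it in the write-up beyond citing the test-function computation.
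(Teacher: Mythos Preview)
Your proposal has a genuine gap at the heart of the argument. The step where you pass from the kernel representation and the splitting $w(\tau-t,\cdot)=(w(\tau-t,\cdot)-m)+m$ to
\[
w(\tau,x)\le m+(1-m)\bigl(1-C_1|U_e|\bigr)+(\text{forcing})
\]
does not follow: the hypothesis $\Gamma_U\ge C_1$ on $U_e$ is a \emph{lower} bound on the kernel and can only produce a lower bound on $\int_{U_e}\Gamma_U\,(w-m)$. To gain a factor $(1-C_1|U_e|)$ in an \emph{upper} bound on $\int_U\Gamma_U\,w(\tau-t,\cdot)$ you would need to know that $w(\tau-t,\cdot)$ is bounded away from its supremum on a definite portion of $U_e$; you have no such information (note $U_e$ need not meet $U_r$, so $m=\inf_{[-T,0]\times U_r}w$ says nothing about $w$ on $U_e$).

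There is a second problem one step earlier: the Dirichlet boundary does \emph{not} contribute favourably to the upper bound you claim. Testing against $\psi=\Gamma_U(\tfrac{t_0-s}{a_0},x,\cdot)$ and integrating by parts twice leaves the boundary term $\int_{\partial U_d}w\,(\vec n\cdot\nabla_y\psi)\,\dd\sigma\le 0$; in the resulting identity this term and the remainder $\int\!\!\int(a-a_0)\partial_s w\,\psi\ge 0$ have \emph{opposite} signs, so one cannot conclude $w(t_0,x)\le\int_U\Gamma_U(\tfrac{t_0+T}{a_0},x,y)w(-T,y)\,\dd y+(\text{forcing})$. Equivalently, if $v$ solves $(a_0\partial_t-\Delta)v=f$ with $v(-T)=w(-T)$ and $v|_{\partial U_d}=0$, then $w-v$ is a subsolution with $(w-v)|_{\partial U_d}=w\ge 0$, and the maximum principle does not force $w\le v$. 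The Dirichlet contribution is favourable only for a \emph{lower} bound on $w$.

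The paper supplies exactly the missing ingredient via a dichotomy on the data at the \emph{fixed} initial time $-T$: either $|\{w(-T,\cdot)\ge\tfrac12\}\cap U_e|\ge\tfrac12|U_e|$ or the complementary inequality holds. In the first case one compares $w$ with the solution $v$ of $(c_0a_0\partial_t-\Delta)v=f$, $v|_{\partial U_d}=0$, $v(-T)=\tfrac12\ind_{E}$; now $w-v$ is a \emph{super}solution, nonnegative on the whole parabolic boundary, hence $w\ge v$, and \eqref{abinf} at time $\tfrac{t+T}{c_0a_0}\in[\tfrac{T}{2c_0a_0},\tfrac{T}{a_0}]$ gives $v\ge \tfrac12\cdot C_1\cdot\tfrac12|U_e|-(\text{forcing})$ on $[-\tfrac T2,0]\times U_r$; combined with $w\le 1$ this yields the oscillation bound. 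The second case is symmetric, applied to $\tilde w=1-w$ with the coefficient $a_0$. The factor $\tfrac14$ is thus exactly (threshold $\tfrac12$) $\times$ (measure fraction $\tfrac12$), not slack from a convexity step.
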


\begin{remark}
  Note that we do not assume a boundary condition for \(w\) along \(\partial U_d\).
\end{remark}

\begin{proof}
  We can distinguish two cases, whether
  \(|\{w(-T,\cdot) \ge \frac 12 \} \cap U_e| \ge \frac 12 | U_e|\) or
  \(|\{w(-T,\cdot) \le \frac 12 \} \cap U_e| \ge \frac 12 |U_e|\).

  In the first case \(|\{w(-T,\cdot) \ge \frac 12 \} \cap U_e| \ge \frac 12 |U_e|\), we take
  \begin{equation*}
    E = \{w(-T,\cdot) \ge \frac 12 \} \cap \Omega_e
  \end{equation*}
  and consider the smooth (constant coefficients) problem
  \begin{equation*}
    \left\{
      \begin{aligned}
        & (a_0c_0\partial_t - \Delta)v = f
        & & \text{for } (t,x) \in (-T,0) \times U, \\
        & v(t,x) = 0
        & & \text{for } (t,x) \in (-T,0) \times \partial U_d, \\
        & \vec n \cdot \nabla_x v(t,x) = 0
        & & \text{for } (t,x) \in (-T,0) \times \partial U_n, \\
        & v(-T,x) = \frac 12 \ind_{E}(x)
        & & \text{for } x \in U.
      \end{aligned}
    \right.
  \end{equation*}

  Then
  \begin{equation*}
    (c_0a_0\partial_t - \Delta)(w-v)  = (c_0a_0-a) \partial_t w \ge 0
  \end{equation*}
  so that \(w \ge v\) by minimum principle.
\medskip

  We can express (for $t \in [-T,0], x \in U$)
  \begin{equation*}
    v(t,x)
    = \int_{y \in U} \Gamma_U(\frac{t+T}{a_0c_0},x,y) \,\frac 12 \ind_E(y)\,
    \dd y
    + \int_{s=-T}^t \int_{y \in U}
    \Gamma_U(\frac{t-s}{a_0c_0},x,y) f(s,y)\, \dd y\, \dd s,
  \end{equation*}
  so that for \((t,x) \in (-T/2,0) \times U_r\), it holds that (working like in the
  proof of \cref{corl})
  \begin{equation*}
    \begin{aligned}
      v(t,x)
      &\ge \frac{C_1}{4} |U_e|
        - \int_{s=-T}^t  \|\Gamma_U(\frac{t-s}{a_0c_0},x,\cdot )\|_{\fsL^{q'}(U)} \,\|f(s,\cdot\,)\|_{\fsL^{q}(U)} \, \dd s \\
      &\ge \frac{C_1}{4} |U_e|
        - C_2 \, \int_{s=-T}^t  \left(\frac{t-s}{a_0c_0} \right)^{- \frac{d}{2q}}  \,\|f(s,\cdot\,)\|_{\fsL^{q}(U)} \, \dd s \\
      &\ge \frac{C_1}{4} |U_e|
        - C_2 \, (a_0c_0)^{\frac{d}{2q}} \bigg[\int_{s=-T}^t  (t-s)^{- \frac{d p'}{2q}} \, \dd s \bigg]^{\frac1{p'}} \, \bigg( \int_{s=-T}^t  \|f(s,\cdot\,)\|_{\fsL^{q}(U)}^p \, \dd s \bigg)^{\frac1p} \\
      &\ge \frac{C_1}{4} |U_e|
        - C_2 \, (a_0c_0)^{\frac{d}{2q}} \bigg(\frac{T^{1 - \frac{d p'}{2q}}}{1 - \frac{d p'}{2q}} \bigg)^{\frac1{p'}} \,  \|f\|_{\fsL^{p}([-T,0]; \fsL^{q}(U)) } ,
    \end{aligned}
  \end{equation*}
  which shows the oscillation decay in this case. 
 \medskip

 In the other case \(|\{w(-T,\cdot) \le \frac 12 \} \cap U_e| \ge \frac 12 |U_e|\), we take

  \begin{equation*}
    \tilde{E} = \{w(-T,\cdot) \le \frac 12 \} \cap U_e
  \end{equation*}
  and set $\tilde{w} := 1 - w$. Then we consider
  $\tilde{v} := \tilde{v}(t,x)$ on $[-T,0] \times U$ such that
  \begin{equation*}
    \left\{
      \begin{aligned}
        & (a_0\partial_t - \Delta)\tilde v = -f
        & & \text{for } (t,x) \in (-T,0) \times U, \\
        & \tilde v(t,x) = 0
        & & \text{for } (t,x) \in (-T,0) \times \partial U_d, \\
        & \vec n \cdot \nabla_x \tilde v(t,x) = 0
        & & \text{for } (t,x) \in (-T,0) \times \partial U_n, \\
        & \tilde v(-T,x) = \frac 12 \ind_{\tilde{E}}(x)
        & & \text{for } x \in U.
      \end{aligned}
    \right.
  \end{equation*}
  We observe that
  \begin{equation*}
    (a_0 \partial_t - \Delta)(\tilde w - \tilde v)
    = (a \partial_t - \Delta)\tilde w
    - (a_0 \partial_t - \Delta)\tilde v
    + (a_0-a) \partial_t \tilde w
    \ge 0,
  \end{equation*}
  so that \(\tilde w \ge \tilde v\).  Then for $t\in [-T,0]$ and $x\in U$,
  \begin{equation*}
    \tilde v(t,x)
    = \int_{y \in U} \Gamma_U(\frac{t+T}{a_0},x,y) \frac 12 \ind_{\tilde{E}}(y)\,
    \dd y
    -
    \int_{s=-T}^t \int_{y \in U}
    \Gamma_U(\frac{t-s}{a_0},x,y) f(s,y)\, \dd y\, \dd s
  \end{equation*}
  and the result follows as before.
\end{proof}

This allows to proceed with the proof of the decay of oscillations.

\begin{proof}[Proof of \cref{thm:oscillation-decay-boundary}]

  We take as in the statement of the Proposition:
  \begin{equation*}
    \Omega_R = \{ (x',x_d) \in B_d(0,R) : x_d > \phi(x') \}
    \quad \text{ and } \quad
    \Omega_{R/4} = \{ (x',x_d) \in B_d(0,R/4) : x_d > \phi(x') \},
  \end{equation*}
  and we choose as set from which we take the measure information
  \begin{equation*}
    U_e := B_d\left(\frac{R}{5} e_d, \frac{R}{10}\right).
  \end{equation*}

  We then define $U := \Omega_R$ and $U_r := \Omega_{R/4}$, which gives (when
  $ \sup \phi \le \frac{R}{11 d}$ and $\|\nabla\phi\|_{\infty} \le \frac{1}{11 d}$) the geometric
  setup, cf.\ \cref{fig:oscillation-decay-boundary}.  We intend to use
  \cref{thm:abstract-decay} with $U, U_e$ and $U_r$ defined above, and with
  $\partial U_d := \partial {U} \cap\partial B_d(0,R)$,
  $\partial U_n := \{ (x',x_d) \in B_d(0,R) : x_d = \phi(x') \}$, for $w$ the function taken
  in the statement of \cref{thm:oscillation-decay-boundary}.

  \begin{figure}
    \centering
    \begin{tikzpicture}[scale=1.5]
      \begin{scope}
        \draw[domain=-4:4,samples=150]
        plot (\x,{0.1*sin(100*\x)+0.05*cos(500*\x)});
        \path[clip,domain=-4:4,samples=150]
        plot (\x,{0.1*sin(100*\x)+0.05*cos(500*\x)})
        -- (4,4) -- (-4.0,4) -- cycle;
        \draw[fill=green!30!white] (0,0) circle (4);
        \draw[pattern=north west lines] (0,0) circle (1);
        \draw[pattern=north east lines] (0,0.8) circle (0.4);
      \end{scope}
      \draw (0,0.9) node[fill=green!30!white] {\(U_e\)};
      \draw (-0.5,0.25) node[fill=green!30!white] {\(U_r\)};
      \draw (0,2) node {\(U\)};
    \end{tikzpicture}
    \caption{Geometric setup for \cref{thm:oscillation-decay-boundary} (with the
      notations of \cref{thm:abstract-decay}).}
    \label{fig:oscillation-decay-boundary}
  \end{figure}
  \medskip

  The upper bound \eqref{absup} of the heat kernel in the assumptions of
  \cref{thm:abstract-decay} follows from \cref{thm:heat-kernel-phi-bound}, with
  $C_2 := C_{\phi, T, d}$.

  We obtain the lower bound \eqref{abinf} of the heat kernel in the assumptions
  of \cref{thm:abstract-decay} thanks to a comparison with the heat kernel over
  the whole space. We recall first that the Green function of the heat equation
  on the whole space is $\Phi_{\R^d}(t,|x-y|)$, where (for $t>0$ and $x\in\R^d$)
  \begin{equation*}
    \Phi_{\R^d}(t,|x|)
    := (4\pi t)^{-d/2} \exp \left( - \frac{|x|^2}{4t} \right).
  \end{equation*}
  We observe that $\partial_t  \Phi_{\R^d}(t,|x|) \ge 0$ if and only if $t \le \frac{|x|^2}{2d}$, so that
  \begin{equation}  \label{ttr}
    \sup_{s \in [0,t]}  \Phi_{\R^d}(s,|x|) =  \Phi_{\R^d}(t,|x|) \,\ind_{t \le \frac{|x|^2}{2d}}
    +  \Phi_{\R^d}( \frac{|x|^2}{2d} ,|x|) \,\ind_{t \ge \frac{|x|^2}{2d}},
  \end{equation}
  \begin{equation}  \label{ttr2}
    \partial_t\left (   \sup_{s \in [0,t]}  \Phi_{\R^d}(s,x) \right) = \partial_t  \Phi_{\R^d}(t,x) \,1_{t \le \frac{|x|^2}{2d}} \ge 0 .
  \end{equation}

  We then claim that for  $t \ge 0$, \(y \in U_e\) and $x \in U$,
  \begin{equation} \label{cp1}
    \Gamma_{U}(t,x,y)
    \ge \Psi(t,x,y) := \Phi_{\R^d}(t,|x-y|) - \sup_{s \in [0,t]} \Phi_{\R^d}(s,\frac{7R}{10}).
  \end{equation}
  Indeed it is sufficient to use the comparison principle for $\Gamma_{U}$ and $\Psi$.

  Fix \(y \in U_e\) and observe for $(t,x) \in \R_+ \times U$ that
  \begin{equation*}
    (\partial_t - \Delta_x) ( \Gamma_{U} -  \Psi) = \partial_t \left(  \sup_{s \in [0,t]} \Phi_{\R^d}(s,\frac{7R}{10})  \right)  \ge 0 .
  \end{equation*}
  For \(x\) belonging to the Dirichlet boundary $\partial U_d$, we see that
  $(\Gamma_{U} - \Psi)(t,x,y) = - \Phi_{\R^d}(t,|x-y|) + \Phi_{\R^d}(t,\frac{7R}{10}) \ge 0$ since
  $|x - y| \ge \frac{7R}{10}$ when $x \in \partial U_d$ and $y \in U_e$.

  When $x$ belongs to the Neumann boundary $\partial U_n$, we see that
  $ \nabla (\Gamma_{U} - \Psi)(t,x,y) \cdot \vec n(x) = {- \nabla \Phi_{\R^d}(t,|x-y|) \cdot \vec n(x)} = \frac{x-y}{2t} \cdot \vec n(x) \, \Phi_{\R^d}(t, |x-y|)$. This
  last quantity is nonnegative as soon as $(x-y) \cdot \vec n(x) \ge 0$ for $x\in \partial U_n$,
  $y \in U_e$, which can be rewritten as
  $y_d - x_d + \sum_{i=1}^{d-1} (x_i - y_i) \, \partial_i \phi(x',x_d) \ge 0$ (still for
  $x\in \partial U_n$, $y \in U_e$).
 But for those $x,y$, we have
  \begin{equation*}
    \begin{split}
      y_d - x_d + \sum_{i=1}^{d-1} (x_i - y_i) \, \partial_i \phi(x',x_d)
      &\ge \frac{R}{10} - \sup \phi  - (d- 1) \|\nabla \phi\|_{\infty} \, \left(R +\frac{R}{10} \right) \\
      &= \bigg[\frac{1}{10} - \left( \frac{\sup \phi}R + \frac{11}{10}\,(d-1)\,\|\nabla \phi\|_{\infty}\right) \bigg]\, R,
    \end{split}
  \end{equation*}
  so that $ \nabla (\Gamma_{U} - \Psi)(t,x,y) \cdot \vec n(x) \ge 0$ as soon as
  $ \max(\frac{\sup \phi}R , \|\nabla \phi\|_{\infty} )\le \frac1{11\,d}$ (which is the assumption
  made in the statement of the Proposition).

  The comparison principle ensures then that \eqref{cp1} holds for $t \ge 0$,
  \(y \in U_e\) and $x \in U$.\medskip

  We see that when $t \le \frac{49}{200}\, \frac{a_0}{d}\, R^2$, then
  \(\sup_{s \in [0,t]} \Phi_{\R^d}(s,\frac{7R}{10}) = \Phi_{\R^d}(t,\frac{7R}{10})\),
  thanks to \eqref{ttr}.

  Moreover, since the distance between any points in \(U_e\) and \(U_r\) is
  always less than
  \begin{equation*}
    \sqrt{ \left(\frac{R}4 + \frac{3R}{10} \right)^2 + \left(\frac{R}4 + \frac{R}{10} \right)^2}
    \le \frac{\sqrt{170}}{20} \, R,
  \end{equation*}
  we get that for $t \le \frac{(7R/10)^2}{2d}$,  \(y \in U_e\) and $x \in U_r$,
  \begin{align*}
    \Gamma_{U}(t,x,y)
    &\ge \Phi_{\R^d}(t,|x-y|) -  \Phi_{\R^d}(t,\frac{7R}{10})\\
    &\ge \Phi_{\R^d}(t, \frac{\sqrt{170}}{20}\,R ) -  \Phi_{\R^d}(t,\frac{7}{10}\, R)  \\
    &\ge (4\pi t)^{-\frac{d}2} \, \ee^{ - \left( \frac{7}{10} \right)^2 \frac{R^2}{4t} } \, \bigg[ \ee^{ \frac{13}{200} \frac{R^2}{4t} } - 1 \bigg] .
  \end{align*}
  Then, for any $T>0$, \(y \in U_e\) and $x \in U_r$,
  \begin{align*}
    \inf_{t \in [\frac{T}{2 c_0 a_0}, \frac{T}{a_0}] \cap [0, \frac{(7R/10)^2}{2d}] }  \Gamma_{U}(t,x,y)
    &\ge
      \left(\frac{a_0}{4\pi T} \right)^{\frac{d}2}
      \, \ee^{ - \left( \frac{7}{10} \right)^2 \frac{c_0 a_0 R^2}{2T} } \,
      \bigg[ \ee^{ \frac{13}{200} \, \frac{a_0R^2}{4T} } - 1 \bigg]  \\
    &\ge
      {\frac{13}{200}}  \,  \left(\frac{a_0}{4\pi T} \right)^{\frac{d}2}
      \, \ee^{ - \left( \frac{7}{10} \right)^2 \frac{c_0 a_0 R^2}{2T} } \,  \frac{a_0 R^2}{4T} .
  \end{align*}
  Selecting now $T := a_0 \,  \frac{(7R/10)^2}{2d}$, we see that
  \begin{align*}
    \inf_{t \in [\frac{T}{2 c_0 a_0}, \frac{T}{a_0}] }  \Gamma_{U}(t,x,y)
    &\ge \, {\frac{13}{200}}
      \left(\frac{2d}{4\pi} \frac{100}{49} \frac1{R^2}\right)^{\frac{d}2}  \, \ee^{ - d c_0} \, 2d\, \frac{25}{49}
    \\
    &\ge  {\frac{13}{392}} \, \left(\frac{50}{49\pi} \right)^{\frac{d}2}\, d^{\frac{d}2+1}   \, \ee^{ - d c_0} \,  R^{-d}.
  \end{align*}

  We can now use \cref{thm:abstract-decay} with
  ($T := a_0 \, \frac{(7R/10)^2}{2d}$)
  \begin{equation*}
    C_1 :=  {\frac{13}{392}}   \,
    \left(\frac{50}{49\pi} \right)^{\frac{d}2}\, d^{\frac{d}2+1}   \, \ee^{ - d c_0} \,  R^{-d}, \qquad
    C_2 :=  C_{\phi, \frac{T}{a_0}, d},
  \end{equation*}
  and deduce that (with the notations of \cref{thm:abstract-decay})
  \begin{equation*}
    \oscillation_{[-\frac{T}2, 0] \times U_r} w
    \le 1 - \frac{C_1}{4} |U_e|
    +  C_2
    \, (a_0c_0)^{\frac{d}{2q}} \bigg(\frac{T^{1 - \frac{d p'}{2q}}}{1 - \frac{d p'}{2q}} \bigg)^{\frac1{p'}} \,  \|f\|_{\fsL^{p}([-T,0]; \fsL^{q}( U)) } ,
  \end{equation*}
  which rewrites
  \begin{multline*}
    \oscillation_{[- a_0 \,  \frac{(7R/10)^2}{4d}, 0] \times \Omega_{R/4}} w
    \le 1 - {\frac{13}{1568}} \, \left(\frac{50}{49\pi} \right)^{\frac{d}2}\, d^{\frac{d}2+1}   \, \ee^{ - d c_0} \,  |B_d(0, \frac{1}{10})| \\
    +\,  (a_0c_0)^{\frac{d}{2q}} \,   C_{\phi,\frac{49 \,R^2}{200\, d}, d}\,
    (1 - \frac{d p'}{2q})^{- \frac1{p'}} \, \left( \frac{49}{100} \, \frac{a_0}{2d} \right)^{\frac1{p'} - \frac{d}{2q}}\, R^{\frac2{p'} - \frac{d}{q}} \, \| f \|_{\fsL^p((-a_0 \,  \frac{(7R/10)^2}{2d},0); \fsL^{q}( \Omega_R))}.
  \end{multline*}
  This last inequality yields the statement of
  \cref{thm:oscillation-decay-boundary} with
  \begin{gather*}
    \beta :=  \frac{49}{200} \, \frac{a_0}d ; \qquad \delta :=  {\frac{13}{1568}} \, \left( 98\, \pi  \right)^{-\frac{d}2}\, d^{\frac{d}2+1}   \, \ee^{ - d c_0} \,  |B_d(0, 1)| , \\
    A :=   (a_0c_0)^{\frac{d}{2q}} \,
    \left(1 - \frac{d p'}{2q} \right)^{- \frac1{p'}} \, \left( \frac{49}{100} \, \frac{a_0}{2d} \right)^{\frac1{p'} - \frac{d}{2q}} .\qedhere
  \end{gather*}
\end{proof}

\subsection{Conclusion of Hölder regularity}
\label{sec:rough:conclusion}

To start the iteration of the decay of oscillation, we perform the following
estimates over the whole domain.

\begin{proposition}\label{thm:supremum-bound}
  We suppose that \(\Omega \subset \R^d\) is a \(\fsC^1\) domain, and over \((0,T] \times \Omega\), we
  consider a solution \(w\ge0\) of \eqref{eq:rough-scalar-pde},
  \eqref{eq:assumption-a} with initial data \(w_\init\) and homogeneous Neumann boundary
  data $\nabla w \cdot \vec n =0$ on $[0,T] \times \partial\Omega$.  Then for \(p, q \in [1, \infty] \) such that $ \gamma := 2 - \frac{2}{p} - \frac{d}q >0$,
  \begin{equation*}
    \| w \|_{\fsL^\infty((0,T] \times \Omega)}
    \le K_1\,
    C_{\Omega, \frac{T}{a_0}, d}\,\,
    T^{1 - \frac1{p} -\frac{d}{2q}}\,
    \| f_{+} \|_{\fsL^p((0,T] ; \fsL^q( \Omega))}
    + \| w_\init \|_{\fsL^\infty(\Omega)},
  \end{equation*}
  where \(K_1\) is a constant only depending on \(a_0,d,p\), and $C_{\Omega, T, d}>0$
  is a notation introduced in \cref{thm:heat-kernel-upper-bound}.

  Moreover, when $w_{\init}$ is Lipschitz, $R>0$, $\beta>0$ and $x_0\in\Omega$, one has for
  all $\epsilon \in (0,\frac12)$,
  \begin{equation}\label{eq:initial-oscillation}
    \oscillation_{(0, \beta\,R^2] \times B_d(x_0,R) \cap (0,T] \times \Omega} w
    \le K_2\, R^{1 - 2\epsilon} \| w_\init \|_{\mathrm{Lip}} + K_3\,
    R^{2 - \frac2{p} -\frac{d}{q}}\,
    \| f \|_{\fsL^p((0,T] ; \fsL^q( \Omega))} , 
  \end{equation} 
  where $K_2>0$ and $K_3>0$ only depend on $a_0, c_0, \epsilon, T, \beta, d, p$ and $\Omega$
  only through $|\Omega|$ and $|||E|||$ defined in
  \cref{thm:heat-kernel-upper-bound}.
\end{proposition}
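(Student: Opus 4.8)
The plan is to trap $w$ between two solutions of the \emph{constant-coefficient} heat equation, using the sign condition $\partial_t w\ge0$, and then read off the two estimates from the heat-kernel bounds of Subsection~\ref{sec:rough:heat-kernel}. Throughout, $\Gamma_\Omega$ denotes the heat kernel on $\Omega$ with homogeneous Neumann data on all of $\partial\Omega$ (so $\partial\Omega_d=\emptyset$), which exists and satisfies \cref{thm:heat-kernel-upper-bound,corl,thm:heat-kernel-moment-bound} since $\Omega$ is $\fsC^1$; recall also that for Neumann data $\int_\Omega\Gamma_\Omega(t,x,\cdot)=1$.

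\emph{The $\fsL^\infty$ bound.} Writing $-\Delta w=f-a\,\partial_t w$ and using $a\ge a_0$ together with $\partial_t w\ge0$ gives $a_0\partial_t w-\Delta w=f-(a-a_0)\partial_t w\le f\le f_+$, so $w$ is a subsolution of $(a_0\partial_t-\Delta)v=f_+$ with datum $w_\init$ and Neumann data. The minimum principle yields $0\le w\le v$, where $v$ is the genuine solution, represented (after the time rescaling $t\mapsto t/a_0$) by
\begin{equation*}
  v(t,x)=\int_\Omega\Gamma_\Omega\big(\tfrac{t}{a_0},x,y\big)\,w_\init(y)\,\dd y+\frac1{a_0}\int_0^t\!\!\int_\Omega\Gamma_\Omega\big(\tfrac{t-s}{a_0},x,y\big)\,f_+(s,y)\,\dd y\,\dd s .
\end{equation*}
The first term is bounded by $\|w_\init\|_{\fsL^\infty(\Omega)}$ since $\|\Gamma_\Omega(\cdot,x,\cdot)\|_{\fsL^1}=1$, and the second by $\tfrac1{a_0}$ times the right-hand side of \cref{corl} with $a=a_0$; collecting the powers of $T$ yields the exponent $1-\tfrac1p-\tfrac d{2q}$ and the constant $C_{\Omega,T/a_0,d}$, proving the first claim with $K_1$ depending on $a_0,d,p$ (and the harmless factor $(1-\tfrac{dp'}{2q})^{-1/p'}$, finite because $\gamma>0$).

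\emph{The oscillation bound.} Write $Q$ for the region in \eqref{eq:initial-oscillation}. From above, $w\le v^+$ with $(a_0\partial_t-\Delta)v^+=f_+$, $v^+(0)=w_\init$, Neumann; from below, $c_0a_0\partial_t w-\Delta w=f+(c_0a_0-a)\partial_t w\ge f\ge-f_-$, so $w\ge v^-$ with $(c_0a_0\partial_t-\Delta)v^-=-f_-$, $v^-(0)=w_\init$, Neumann. Split $v^\pm=h^\pm+g^\pm$ into the homogeneous part carrying $w_\init$ and the forced part with zero datum. Mass conservation gives $h^\pm(t,x)-w_\init(x)=\int_\Omega\Gamma_\Omega(\tfrac{t}{c^\pm a_0},x,y)\,(w_\init(y)-w_\init(x))\,\dd y$ with $c^+=1$, $c^-=c_0$, so by \cref{thm:heat-kernel-moment-bound} and $T/(c^\pm a_0)\le T/a_0$,
\begin{equation*}
  |h^\pm(t,x)-w_\init(x)|\le\|w_\init\|_{\mathrm{Lip}}\,\tilde C_{\epsilon,T/a_0}\,(t/a_0)^{1/2-\epsilon}\le\|w_\init\|_{\mathrm{Lip}}\,\tilde C_{\epsilon,T/a_0}\,(\beta/a_0)^{1/2-\epsilon}\,R^{1-2\epsilon}\qquad\text{for }t\le\beta R^2 .
\end{equation*}
For the forced parts, \cref{corl} with horizon $T_*:=\min(\beta R^2,T)\le T$ bounds $\sup_Q g^+$ and $\sup_Q(-g^-)$ by $C\,C_{\Omega,T/a_0,d}\,R^\gamma\,\|f\|_{\fsL^p((0,T];\fsL^q(\Omega))}$, using that $C_{\Omega,\cdot,d}$ is increasing, $\gamma/2=1-\tfrac1p-\tfrac d{2q}$, and $T_*^{\gamma/2}\le(\beta R^2)^{\gamma/2}$. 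Finally, since $w\le v^+$ and $w\ge v^-$ on $Q$,
\begin{equation*}
  \oscillation_Q w\le\sup_Q v^+-\inf_Q v^-\le\big(\sup_Q h^+-\inf_Q h^-\big)+\sup_Q g^++\sup_Q(-g^-)\le\oscillation_{B_d(x_0,R)\cap\Omega}w_\init+C\,R^{1-2\epsilon}\|w_\init\|_{\mathrm{Lip}}+C\,R^\gamma\|f\|_{\fsL^p\fsL^q},
\end{equation*}
and since $\oscillation_{B_d(x_0,R)\cap\Omega}w_\init\le2R^{1-2\epsilon}\|w_\init\|_{\mathrm{Lip}}$ (for $R\le1$ because $R\le R^{1-2\epsilon}$, for $R>1$ because $\oscillation w_\init\le2\|w_\init\|_{\fsL^\infty}\le2\|w_\init\|_{\mathrm{Lip}}\le2R^{1-2\epsilon}\|w_\init\|_{\mathrm{Lip}}$), the bound \eqref{eq:initial-oscillation} follows with $K_2,K_3$ of the stated form.

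The conceptual heart is the two-sided trapping of $w$ between the heat flows of slopes $a_0$ and $c_0a_0$ that $\partial_t w\ge0$ makes available, combined with the moment bound of \cref{thm:heat-kernel-moment-bound} for the $w_\init$-part; the rest is bookkeeping — the time rescalings in the Duhamel formulas, the matching of the powers of $R$ via $\gamma/2=1-\tfrac1p-\tfrac d{2q}$, and the monotonicity in the time variable of $C_{\Omega,\cdot,d}$ and $\tilde C_{\epsilon,\cdot}$, which lets every auxiliary constant be absorbed into $C_{\Omega,T/a_0,d}$ and $\tilde C_{\epsilon,T/a_0}$.
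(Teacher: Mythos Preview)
Your argument is correct and matches the paper's proof in all essential points: the two-sided comparison of $w$ with constant-coefficient heat flows at speeds $a_0$ and $c_0a_0$ (exploiting $\partial_t w\ge0$), the moment bound \cref{thm:heat-kernel-moment-bound} for the initial-data contribution, and \cref{corl} for the forcing. The only cosmetic differences are that the paper uses forcing $f$ (rather than $f_+$, $-f_-$) in both comparison problems and organizes the oscillation estimate through $|w(t,x)-w_\init(x')|$ rather than your $\sup_Q v^+-\inf_Q v^-$ decomposition.
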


\begin{proof}[Proof of \cref{thm:supremum-bound}]
  For the supremum bound, we start with the upper bound for \(w\) by comparing
  it to the solution \(v\) of the heat equation
  \begin{equation}
    \label{eq:supremum-bound:comparision-problem}
    \left\{
      \begin{aligned}
        &(a_0 \partial_t - \Delta) v(t,x) = f(t,x)
        & & \text{for } (t,x) \in (0,T] \times \Omega,\\
        & \vec n \cdot \nabla_x v(t,x) = 0
        & & \text{for } (t,x) \in (0,T] \times \partial\Omega,\\
        & v(0,x) = w_{\init}
        & & \text{for } x \in \Omega.
      \end{aligned}
    \right.
  \end{equation}
  In the interior \((0,T] \times \Omega\), we find that
  \begin{equation*}
    (a_0 \partial_t - \Delta)(v-w)
    = f - f + (a-a_0) \partial_t w
    \ge 0,
  \end{equation*}
  while the boundary condition and the initial data are identical for $v$ and
  $w$, so that by the comparison principle, \(w \le v\).

  We know that ($\Gamma_{\Omega}$ being the Green function associated to
  the Neumann boundary condition, that is $\partial\Omega_d = \emptyset$), when $t\in (0,T]$, $x\in \Omega$:
  \begin{equation*}
    v(t,x) = \int_{y\in\Omega} \Gamma_{\Omega}(\frac{t}{a_0},x,y)\, w_\init(y)\, \dd y
    + \int_{s=0}^t \int_{y\in\Omega} \Gamma_{\Omega}(\frac{t-s}{a_0},x,y)\, f(s,y)\,
    \dd y\, \dd s.
  \end{equation*}
  Then the $\fsL^1$ and $\fsL^{\infty}$ bound on the fundamental solution
  (\cref{thm:heat-kernel-upper-bound}) prove the estimate since
  $$ \int_{y\in\Omega} \Gamma_{\Omega}(\frac{t}{a_0},x,y)\, w_\init(y)\, \dd y \le \|w_\init\|_{\infty},$$
  and, thanks to \cref{corl},
  \begin{align*}
    \int_{s=0}^t \int_{y\in\Omega}
    \Gamma_{\Omega}(\frac{t-s}{a_0},x,y)\, f(s,y)\, \dd y\, \dd s
    &\le
      C_{\Omega, \frac{T}{a_0}, d} \, a_0^{\frac{d}{2q}} \,
      \left( \frac{T^{1 - \frac{d}2\frac{p'}{q}}}{1 - \frac{d}2\frac{p'}{q}} \right)^{\frac1{p'}} \,
      \|f_{+}\|_{\fsL^p([0,T]; \fsL^q(\Omega))},
  \end{align*}
  where we only need to bound the positive part \(f_+\) of \(f\) as \(\Gamma_{\Omega}\) is
  nonnegative.  Hence we obtain the supremum bound with
  $K_1 := \frac{a_0^{\frac{d}{2q}} }{(1 - \frac{d}{2} \frac{p'}{q})^{1 - \frac1p}}$.

  For the second bound \eqref{eq:initial-oscillation}, we consider $\tilde{v}$ defined by
  \begin{equation}
    \left\{
      \begin{aligned}
        &(a_0\,c_0 \partial_t - \Delta) \tilde{v}(t,x) = f(t,x)
        & & \text{for } (t,x) \in (0,T] \times \Omega,\\
        & \vec n \cdot \nabla_x \tilde{v}(t,x) = 0
        & & \text{for } (t,x) \in (0,T] \times \partial\Omega,\\
        &  \tilde{v}(0,x) = w_{\init}
        & & \text{for } x \in \Omega.
      \end{aligned}
    \right.
  \end{equation}
  We have in the interior \((0,T] \times \Omega\) the estimate
  \begin{equation*}
    (a_0\,c_0 \partial_t - \Delta)(w - \tilde{v}) = f - f + (a_0\,c_0 - a)\, \partial_t w
    \ge 0,
  \end{equation*}
  and identical boundary condition and initial data for $w$ and $\tilde{v}$.
  Then, by the comparison principle, we get \(w \ge \tilde{v}\), and finally
  (using what we already know from the proof of the supremum bound),
  \(v \ge w \ge \tilde{v}\), that we can rewrite ($\Gamma_{\Omega}$ being the Green function
  associated to the Neumann boundary condition, that is $\partial\Omega_d = \emptyset$), when
  $t\in (0,T]$, $x\in \Omega$, as:
  \begin{align*}
    &\int_{y\in\Omega} \Gamma_{\Omega}(\frac{t}{c_0a_0},x,y)\, w_\init(y)\, \dd y
      + \int_{s=0}^t \int_{y\in\Omega} \Gamma_{\Omega}(\frac{t-s}{c_0a_0},x,y)\, f(s,y)\,
      \dd y\, \dd s\\
    &\le
      w(t,x)\\
    &\le \int_{y\in\Omega} \Gamma_{\Omega}(\frac{t}{a_0},x,y)\, w_\init(y)\, \dd y
      + \int_{s=0}^t \int_{y\in\Omega} \Gamma_{\Omega}(\frac{t-s}{a_0},x,y)\, f(s,y)\,
      \dd y\, \dd s.
  \end{align*}

  From the moment bound of \cref{thm:heat-kernel-moment-bound}, if $w_\init$ is
  Lipschitz, we get (remembering that $ \Gamma_{\Omega}$ is the Green function associated
  to the Neumann boundary condition, so that $\int_{y\in\Omega} \Gamma_{\Omega}(t,x,y) \, \dd y =1$
  for any $t>0$, $x\in \Omega$), whenever $\epsilon \in (0, \frac12)$,
  \begin{multline}\label{jkj}
    \bigg|  \int_{y\in\Omega} \Gamma_{\Omega}(\frac{t}{a_0},x,y)\, w_\init(y)  \,  \dd y - w_\init (x) \bigg|
    =  \bigg| \int_{y\in\Omega} \Gamma_{\Omega}(\frac{t}{a_0},x,y)\,( w_\init(y) -  w_\init(x))\, \dd y \bigg| \\
    \le \|w_\init\|_{\mathrm{Lip}}\,  \int_{y\in\Omega} \Gamma_{\Omega}(\frac{t}{a_0},x,y)\,|x-y| \,  \dd y  \le \tilde{C}_{\epsilon, \frac{T}{a_0}}\, \|w_\init\|_{\mathrm{Lip}}\, \left(\frac{t}{a_0}\right)^{1/2 - \epsilon} .
  \end{multline}
  Then, using estimates \eqref{jkj} and \cref{corl}, we see that for
  $x,x' \in \Omega$,
  \begin{align*}
    &w(t,x) -  w_\init (x') = ( w(t,x) -  w_\init (x) ) + (w_\init (x) - w_\init (x')) \\
    &\le  \int_{y\in\Omega} \Gamma_{\Omega}(\frac{t}{a_0},x,y)\, w_\init(y)  \,  \dd y - w_\init (x)
      + |w_\init (x) - w_\init (x')| + \int_{s=0}^t \int_{y\in\Omega} \Gamma_{\Omega}(\frac{t-s}{a_0},x,y)\, |f(s,y)|\,
      \dd y\, \dd s \\
    &\le \|w_\init\|_{\mathrm{Lip}} \,\bigg(\tilde{C}_{\epsilon, \frac{T}{a_0}}\, \left(\frac{t}{a_0}\right)^{1/2 - \epsilon} + \, |x - x'|
      \bigg) +
      C_{\Omega, \frac{T}{a_0}, d} \, a_0^{\frac{d}{2q}} \, \left( \frac{t^{1 - \frac{d}2\frac{p'}{q}}}{1 - \frac{d}2\frac{p'}{q}} \right)^{\frac1{p'}} \,  \|f\|_{\fsL^p([0,T]; \fsL^q(\Omega))} .
  \end{align*}
  In the same way,
  \begin{align*}
    \bigg|  \int_{y\in\Omega} \Gamma_{\Omega}(\frac{t}{c_0 a_0},x,y)\, w_\init(y)  \,  \dd y - w_\init (x) \bigg|
    &\le \|w_\init\|_{\mathrm{Lip}}\,  \int_{y\in\Omega} \Gamma_{\Omega}(\frac{t}{c_0 a_0},x,y)\,|x-y| \,  \dd y \\
    &\le
      \tilde{C}_{\epsilon, \frac{T}{c_0 a_0}}\, \|w_\init\|_{\mathrm{Lip}}\, \left(\frac{t}{c_0 a_0}\right)^{1/2 - \epsilon} ,
  \end{align*}
  and finally
  \begin{multline}
    |w(t,x) -  w_\init (x') | \le
    \|w_\init\|_{\mathrm{Lip}} \, \bigg(\tilde{C}_{\epsilon, \frac{T}{a_0}}\, \left(\frac{t}{a_0}\right)^{1/2 - \epsilon} + \, |x - x'|   \bigg) \\
    +\, C_{\Omega, \frac{T}{a_0}, d} \, (a_0c_0)^{\frac{d}{2q}} \, \left( \frac{t^{1 - \frac{d}2\frac{p'}{q}}}{1 - \frac{d}2\frac{p'}{q}} \right)^{\frac1{p'}} \,  \|f\|_{\fsL^p([0,T]; \fsL^q(\Omega))} .
  \end{multline}
  Taking now  $R>0$, $\beta>0$, $x_0\in \Omega$, we get when $x,x' \in B(x_0, R)$, $0 \le t, t'\le \min(T, \beta R^2)$:
  \begin{multline}
    |w(t,x) -  w(t',x')| \le  2\, \tilde{C}_{\epsilon, \frac{T}{a_0}}\, \left(\frac{\beta}{a_0}\right)^{1/2 - \epsilon} \, \|w_\init\|_{\mathrm{Lip}} \,R^{1 - 2\epsilon} +   \|w_\init\|_{\mathrm{Lip}}\, R\\
    +\, 2\, C_{\Omega, \frac{T}{a_0}, d} \, (a_0c_0)^{\frac{d}{2q}} \, \left(1 - \frac{d}2\frac{p'}{q} \right)^{-\frac1{p'}} \,
    \beta^{\frac1{p'}  - \frac{d}{2q}} \,  \,R^{\frac2{p'}  - \frac{d}{q}} \,  \|f\|_{\fsL^p([0,T]; \fsL^q(\Omega))} ,
  \end{multline}
  which gives \eqref{eq:initial-oscillation} with
  \begin{equation*}
    K_2 := \max \bigg[2 \left(\frac{T}{\beta}\right)^{\epsilon} ,  2\, \tilde{C}_{\epsilon, \frac{T}{a_0}}\, \left(\frac{\beta}{a_0}\right)^{1/2 - \epsilon} \bigg], \qquad K_3 := 2\, \, C_{\Omega, \frac{T}{a_0}, d} \, (a_0c_0)^{\frac{d}{2q}} \, \left(1 - \frac{d}2\frac{p'}{q} \right)^{-\frac1{p'}} \,
    \beta^{\frac1{p'}  - \frac{d}{2q}}. \qedhere
  \end{equation*}
\end{proof}

%

We can finally prove our main Theorem.

\begin{proof}[Proof of \cref{thm:final-hoelder-regularity}]
  By the regularity of \(\Omega\), we can find \(R_0 >0\) such that for all \(0<R<R_0\)
  and points \(x_0 \in \Omega\), in a suitable rotated coordinate system,
  \begin{equation*}
    \Omega \cap B_d(x_0,R)
    = \{x_0 + R(x',x_d) : x_d > \phi(x')\} \cap B_d(x_0,R),
  \end{equation*}
  for a $\fsC^1$ function \(\phi : \R^{d-1} \to \R\) (depending on $R$) with
  \(\phi \le \frac{1}{11\,d}\) and \(\|\nabla \phi\|_{\infty} \le \frac1{11\,d}\).
  \medskip

  Indeed, one starts by observing that there exist some constants $K_0, L_0>0$
  such that at each point $y_0 \in \partial\Omega$ the domain can be expresssed as
  $\Omega \cap B(y_0, L_0) = \{ y_0 + y' - y_d\,\vec n(y_0)\in \R^{d}, y_d \ge \psi (y'),\, y' \cdot \vec n(y_0) = 0, y_d \in \R\} \cap B(y_0, L_0)$,
  where $\psi$ is a ($y_0$-dependent) $C^2$ function with $\psi(0) = 0$, $\nabla \psi (0) =0$,
  $\|\nabla^2\psi\|_{\infty} \le K_0$.  As a consequence, $\|\nabla\psi\|_{\infty} \le K_0\, \rho$ and
  $\|\psi\|_{\infty} \le K_0\, \rho^2/2$ on $B_{d-1}(0,\rho)$.

  Then, we take $R_0 := (11\, d\, K_0)^{-1}$. For $x_0 \in \Omega$ such that
  $d(x_0, \partial\Omega) \le R_0$ and $R \in (0,R_0]$, we define
  $\phi(x') = R^{-1} \, [\, \psi(R\,x') - |x_0 -y_0| \, ]$, where $y_0 \in \partial \Omega$ is such that
  $y_0 - x_0 // n(y_0)$, $|y_0 - x_0 |\le R_0$.  For $R \le R_0$, and $|x'| \le 1$, we
  see that $ |\nabla \phi (x')| \le K_0\, R \le (11\,d)^{-1} $, and
  $ \phi (x') \le (22\,d)^{-1} $.  \medskip

  For a given \(t_0 \in [\beta R^2, T]\) (where $\beta := \frac{49}{200}\, \frac{a_0}d$
  from \cref{thm:oscillation-decay-boundary}), we then consider
  \begin{equation*}
    \tilde w(\tilde t,\tilde x) = w(t_0+R^2\tilde t,x_0+R \tilde x) .
  \end{equation*}
  Then \(\tilde w\) is defined on $[-\beta, 0] \times \Omega_1$ with \(\Omega_1 := B_d(0,1) \cap \{(x',x_d) : x_d > \phi(x')\}\), and satisfies on this set
  \begin{equation*}
    \tilde a \partial_{\tilde t} \tilde w - \Delta \tilde w = R^2 \tilde f, \qquad \partial_{\tilde t} \tilde{w} \ge 0,
  \end{equation*}
  where \( \tilde a(\tilde t,\tilde x) := a(t_0+R^2\tilde t,x_0+R \tilde x) \)
  and \( \tilde f(\tilde t,\tilde x) := f(t_0+R^2\tilde t,x_0+R \tilde x)\).  We
  also see that $\tilde{w}$ satisfies the Neumann condition
  $\nabla_{\tilde x} \tilde{w} \cdot \vec n = 0$ on
  $(-\beta, 0) \times \{(x',x_d) \in B_d(0,1) : x_d = \phi(x') \}$.  Finally, the bounds
  \eqref{eq:assumption-a} on the coefficient of \(a\) equally hold for
  \(\tilde a\), and the following estimate holds for \(\tilde f\):
  \begin{equation*}
    \| \tilde f \|_{\fsL^p((- \beta,0); \fsL^{q}( \Omega_1))}
    \le R^{-(\frac{2}p +\frac{d}{q})} \| f \|_{\fsL^p([0,T]; \fsL^q(\Omega))} .
  \end{equation*}

  Hence, using the notation \(V := B_d(0,1) \cap \{(x',x_d) : x_d > \phi(x')\}\), we
  get thanks to \cref{thm:oscillation-decay-boundary} applied to the function
  $ \frac{\tilde{w}}{\oscillation_{(-\beta,0] \times U} \tilde w} \in [0,1]$, that (when
  $t_0 \in [\beta\,R^2, T]$ and $R \in (0,R_0)$)
  \begin{equation}\label{eq:oscillation-decay-w}
    \begin{aligned}
      \oscillation_{(t_0-\beta R^2/16,t_0] \times [B_d(x_0,R/4) \cap \Omega]} w
      &=
        \oscillation_{(-\beta/16,0] \times [B_d(0,1/4) \cap V]} \tilde w\\
      &=
        \oscillation_{(-\beta/16,0] \times [B_d(0,1/4) \cap V]} \left(\frac{\tilde{w}}{\oscillation_{(-\beta,0] \times V} \tilde w} \right)\,\,   \oscillation_{(-\beta,0] \times V} \tilde w \\
      &\le  \bigg[ 1-\delta + \frac{C_{f,1} \, R^2 \, \| \tilde f \|_{\fsL^p((- \beta,0); \fsL^{q}( \Omega_1))} }{\oscillation_{(-\beta,0] \times V} \tilde w} \, \bigg]
        \,  \oscillation_{(-\beta,0] \times V} \tilde w  \\
      &\le (1-\delta)
        \oscillation_{(t_0-\beta R^2,t_0] \times [B_d(x_0,R) \cap \Omega]} w
        + C_{f,1} \, R^{\gamma} \, \| f \|_{\fsL^p([0,T]; \fsL^q(\Omega))},
    \end{aligned}
  \end{equation}
  where we recall that \(\gamma = 2-\frac{2}{p} - \frac{d}{q}\) from the statement.

  We then iterate this estimate in order to find the claimed Hölder regularity.
  As a starting point, note that we control \(\| w \|_{\infty}\) by the first part of
  \cref{thm:supremum-bound}.  For the induction, set
  \begin{equation}
    \label{eq:def-rf}
    R_f = R_0 \left( 1 + \frac{\| f \|_{\fsL^p([0,T]; \fsL^q(\Omega))}}
      {\| w \|_{\infty}+\| w_{\init} \|_{\mathrm{Lip}}} \right)^{-\frac{1}{\gamma}}
    \le R_0 ,
  \end{equation}
  and fix \(\epsilon \in (0,1/2)\) for the initial regularity in the second part of
  \cref{thm:supremum-bound}.  Then set
  \begin{equation}
    \label{eq:def-lambda-0}
    \Lambda = \max \left( 1 - \frac{\delta}{2}, 4^{-\gamma}, 4^{-1+2\epsilon} \right),
  \end{equation}
  and
  \begin{equation}
    \label{eq:def-c1}
    C_1 = \max \left(1,
      K_2\, (4R_0)^{1-2\epsilon},
      K_3\, (4R_0)^{\gamma},
      \frac{2}{\delta}\, C_{f,1}\, R_{0}^{\gamma} \right).
  \end{equation}
  For any fixed \((t_0,x_0) \in [0,T] \times \overline \Omega\), we now claim that for all
  integers \(k\ge0\) such that \(\beta R_{f}^2 4^{-2k} \le t_0\), it holds that
  \begin{equation}\label{eq:oscillation-induction}
    \oscillation_{(t_0-\beta R_f^2 4^{-2k},t_0] \times B_d(x_0,R_f 4^{-k}) \cap \Omega} w
    \le C_1 \,   (\| w \|_{\infty} + \| w_{\init} \|_{\mathrm{Lip}})\, \Lambda^k,
  \end{equation}

  For the proof of the claim~\eqref{eq:oscillation-induction}, we define
  $k_0 := \min \{ k \ge 0 \text{ such that } \beta R^2_f4^{-2k} \le t_0 \}$.

  In the case when \(k_0=0\), we use that \(0 \le w \le \| w \|_{\infty}\), which yields
  \eqref{eq:oscillation-induction}, since the oscillation is at most
  \(\|w\|_{\infty}\).

  In the case when \(k_0 \ge 1\), recording the definition of $k_0$,  we see that
  \(t_0 \le \beta R_f^2 4^{-2k_0+2}\).  Hence, using the second part of
  \cref{thm:supremum-bound} with $R := \sqrt{t_0/\beta} \le R_f 4^{-k_0+1}$, we find
  \begin{equation*}
    \begin{split}
      &\oscillation_{(t_0-\beta R_f^2 4^{-2k_0},t_0] \times [B_d(x_0,R_f 4^{-k_0}) \cap \Omega]} w\\
      &\le
        \oscillation_{(0, \beta\,R^2] \times [B_d(x_0,R) \cap \Omega]} w\\
      &\le K_2 \, R^{1-2\epsilon}
        \| w_{\init} \|_{\mathrm{Lip}}
        + K_3 \, R^{\gamma}\,
        \| f \|_{\fsL^p([0,T]; \fsL^q(\Omega))} \\
      &\le K_2 (4R_0)^{1-2\epsilon}
        \left( \frac{1}{4^{1-2\epsilon}} \right)^{k_0}\,
        \| w_{\init} \|_{\mathrm{Lip}}
        + K_3
        \left( 4R_0 \right)^{\gamma}
        4^{-k_0 \gamma}
        \frac{\| f \|_{\fsL^p([0,T]; \fsL^q(\Omega))}}
        {\left( 1 + \frac{\| f \|_{\fsL^p([0,T]; \fsL^q(\Omega))}}{\| w \|_{\infty}+\| w_{\init} \|_{\mathrm{Lip}}} \right)}
      \\
      &\le  C_1\,  (\| w \|_{\infty}+\| w_{\init} \|_{\mathrm{Lip}})\, \Lambda^{k_0}.
    \end{split}
  \end{equation*}

  For the remaining \(k \ge k_0\), we prove the
  claim~\eqref{eq:oscillation-induction} by induction.  Assume that the claim
  holds for \(k \ge k_0\), then for \(k+1\), we use \eqref{eq:oscillation-decay-w}
  with $R = R_f\,4^{-k} \le R_0$ to find
  \begin{equation*}
    \begin{split}
      &\oscillation_{(t_0-\beta R_f^2 4^{-2k-2},t_0] \times [B_d(x_0,R_f 4^{-k-1}) \cap \Omega]} w\\
      &\le (1 - \delta) \,  \oscillation_{(t_0-\beta R_f^2 4^{-2k},t_0] \times [B_d(x_0,R_f 4^{-k}) \cap \Omega]} w
        +\, C_{f,1} \, (R_f/4^k)^{\gamma} \, \| f \|_{\fsL^p([0,T]; \fsL^q(\Omega))} \\
      &\le (1 - \delta) \, C_1 \,   (\| w \|_{\infty}+\| w_{\init} \|_{\mathrm{Lip}})\,\Lambda^k  +  C_{f,1} \, (R_0/4^k)^{\gamma} \,
        \frac{\| f \|_{\fsL^p([0,T]; \fsL^q(\Omega))}}
        {\left( 1 + \frac{\| f \|_{\fsL^p([0,T]; \fsL^q(\Omega))}}{\| w \|_{\infty}+\| w_{\init} \|_{\mathrm{Lip}}} \right)}.
    \end{split}
  \end{equation*}
  Hence, we see that we get estimate \eqref{eq:oscillation-induction} for \(k+1\)
  instead of $k$, as soon as
  \begin{equation} \label{fif}
    \Lambda \ge 1-\delta  + \frac{C_{f,1} R_0^{\gamma}}{C_1}\,
    \left( \frac{\Lambda^{-1}}{4^{\gamma}} \right)^k.
  \end{equation}
  By the choice of \(C_1\), we find that \(\frac{C_{f,1} R_0^{\gamma} }{C_1}\le \delta/2\)
  and, by the choice of \(\Lambda\), we find that \(\frac{\Lambda^{-1}}{4^{\gamma}} \le 1\), so
  that \eqref{fif} holds as soon as $\Lambda \ge 1 - \delta/2$, which holds, once again
  thanks to the choice of $\Lambda$.  \medskip

  For the resulting Hölder continuity, we consider \((t,x)\) and \((t',x')\) in
  \([0,T] \times \Omega\) such that \(|t-t'| \le r^2\) and \(|x-x'| \le r \le \diam (\Omega) \) holds
  for some \(r>0\).  Without loss of generality, we take \(t' \le t\), so that
  defining $\tilde r := r/\sqrt{\min(1,\beta)}$, the following inequality holds:
  \begin{equation*}
    |w(t,x) - w(t',x')|
    \le
    \oscillation_{[t-\beta \tilde r^2,t]\times [B_d(x,\tilde r) \cap \Omega]} w, \qquad
    \text{when} \quad t \ge \beta \tilde r^2.
  \end{equation*}
  For \(\tilde r \le R_f\), we consider the maximal integer \(k\) such that
  \(R_f4^{-k} \ge \tilde r\).  In the case when \(t \ge \beta R_f^2 4^{-2k}\), we use
  estimate \eqref{eq:oscillation-induction}, and get
  \begin{equation*}
    |w(t,x) - w(t',x')|
    \le
    C_1 \, (\| w \|_{\infty}+\| w_{\init} \|_{\mathrm{Lip}}) \,\Lambda^{k}.
  \end{equation*}
  By the choice of \(k\), we then have \(\tilde r \ge R_f 4^{-k-1}\) so that
  \(k+1 \ge - \log \left( \tilde r/R_f \right) / \log 4\).  Hence, for
  \(\alpha : = \log(\Lambda^{-1})/\log 4 = \min \left( \frac{ \log( \frac1{1- \delta/2}) } {\log 4}, \gamma, 1 - 2 \epsilon \right) \),
  we find in this case
  \begin{equation*}
    \begin{split}
      |w(t,x) - w(t',x')|
      &\le \frac{C_1}{\Lambda} (\| w \|_{\infty}+\| w_{\init} \|_{\mathrm{Lip}})\,\left( \frac{\tilde r}{R_f} \right)^{\alpha}\\
      &\le \frac{C_1}{\Lambda}\, R_0^{- \alpha}\, (\| w \|_{\infty}+\| w_{\init} \|_{\mathrm{Lip}} + \| f \|_{\fsL^p([0,T]; \fsL^q(\Omega))}  )^{\frac{\alpha}{\gamma}}\,  (\| w \|_{\infty}+\| w_{\init} \|_{\mathrm{Lip}} )^{1-\frac{\alpha}{\gamma}} .
    \end{split}
  \end{equation*}
  Recalling the bound of \cref{thm:supremum-bound} for \(\| w \|_{\infty}\), that is
  $$ \| w \|_{\infty} \le K_1\, C_{\Omega, \frac{T}{a_0},d}\, T^{\frac\gamma{2}} \, \| f_+ \|_{\fsL^p([0,T]; \fsL^q(\Omega))} + \| w_{\init} \|_{\infty}, $$
  we end up with the statement of the Theorem.

  In the case when \(t \le \beta R_f^2 4^{-2k}\), we use the second bound of
  \cref{thm:supremum-bound} with \(R := R_f 4^{-k} \ge \tilde r\) to find
  \begin{equation*}
    \begin{split}
      |w(t,x) - w(t',x')|
      &\le
        \oscillation_{[0,t]\times [B_d(x,\tilde r) \cap \Omega]} w \\
      &\le
        \oscillation_{[0,\beta R^2] \times [B_d(x,R) \cap \Omega]} \\
      &\le K_2 \, R^{1-2\epsilon}
        \| w_{\init} \|_{\mathrm{Lip}}
        + K_3 \, R^{\gamma}\,
        \| f \|_{\fsL^p([0,T]; \fsL^q(\Omega))} \\
      &\le K_2 R_f^{1-2\epsilon}
        4^{-k(1-2\epsilon)}
        \| w_{\init} \|_{\mathrm{Lip}}
        + K_3
        R_f^{\gamma}
        4^{-k\gamma}
        (\| w \|_{\infty}+\| w_{\init} \|_{\mathrm{Lip}}) \\
      &\le K_2 4^{1-2\epsilon}\,
        {\tilde r}^{1-2\epsilon}\,
        \| w_{\init} \|_{\mathrm{Lip}}
        + K_3
        4^{\gamma}\,
        {\tilde r}^{\gamma}\,
        (\| w \|_{\infty}+\| w_{\init} \|_{\mathrm{Lip}})  \\
      &\le K_2 4^{1-2\epsilon}\, \diam (\Omega)^{\gamma - \alpha}\, \min(1,\beta)^{- \gamma/2}\,
        {r}^{\alpha}\,
        \| w_{\init} \|_{\mathrm{Lip}}\\
      &\qquad+ K_3
        4^{\gamma}\, \, \diam (\Omega)^{1 - 2\epsilon - \alpha}\, \min(1,\beta)^{- (1 - 2\epsilon)/2}\,
        {r}^{\alpha}\,
        (\| w \|_{\infty}+\| w_{\init} \|_{\mathrm{Lip}}) .
    \end{split}
  \end{equation*}
  In that case also, we end up with the statement of the Theorem.

  Finally, in the case when \(\tilde r > R_{f}\), we can use the supremum bound
  to find that
  \begin{equation*}
    \begin{split}
      |w(t,x) - w(t',x')|
      &\le \| w \|_{\infty}
        \le \| w \|_{\infty} \, R_f^{-\alpha}\, \tilde r^{\alpha}\\
      &\le R_0 ^{- \alpha}\, (\| w \|_{\infty}+\| w_{\init} \|_{\mathrm{Lip}} +
        \| f \|_{\fsL^p([0,T]; \fsL^q(\Omega))}
        )^{\frac{\alpha}{\gamma}}\,  (\| w \|_{\infty}+\| w_{\init} \|_{\mathrm{Lip}} )^{1-\frac{\alpha}{\gamma}} \, \tilde r^{\alpha},
    \end{split}
  \end{equation*}
  which again gives the statement of the Theorem.
\end{proof}

\section{Global strong solutions to the triangular SKT system in dimension 3 and 4} \label{trois}

In this section, we present the proof of Theorem~\ref{proskt}.
\medskip

We first state the most basic a priori estimate for system \eqref{eq:SKT} --
\eqref{skt_id}, considering (sufficiently smooth) solutions $u,v \ge 0$ of the
system.

From the maximum principle applied to the second equation, we get from the start
that (for any $T>0$), \(\| v \|_{\fsL^{\infty}([0,T] \times \Omega)} \le C\).  We denote here and
in the sequel by $C$ a constant depending only on $\Omega$, $T$, the initial data and
the parameters of the system (we will denote by $C_p$ a constant when it also
depends on an extra parameter $p$).

Integrating the first equation on $\Omega$, we also see that
\(\| u \|_{\fsL^{\infty}([0,T] ; \fsL^1(\Omega))} \le C\).  \medskip

 Then, using $u^p$ with $p>0$ as a multiplier in the first equation, we get
$$ \frac{\dd}{\dd t} \int_{\Omega} \frac{u^{p+1}}{p+1} + p \int_{\Omega}  u^{p-1} \nabla u \cdot \left( (d_1 + \sigma\,v) \nabla u + \sigma u\, \nabla v\right) =  \int_{\Omega} u^{p+1} \, ( r_u - d_{11}\, u  - d_{12}\, v ) 
, $$
so that 
$$ \frac{\dd}{\dd t} \int_{\Omega} \frac{u^{p+1}}{p+1} + p  \int_{\Omega} (d_1 + \sigma\,v)  u^{p-1} |\nabla u|^2 
  + \sigma \int_{\Omega} u^p \nabla u \cdot \nabla v  = \int_{\Omega} u^{p+1} \, ( r_u - d_{11}\, u  - d_{12}\, v ) 
, $$
and (using Young's inequality)
$$ \frac{\dd}{\dd t} \int_{\Omega} \frac{u^{p+1}}{p+1}  + d_1\,\frac{4p}{(p+1)^2} \, \int_{\Omega} \,|\nabla (u^{\frac{p+1}{2}})|^2
 \le C_p + C_p \int_{\Omega} u^{p+1} \,|\Delta v|  .$$
Using maximal regularity  in the equation for $v$ (and again Young's inequality), we get
\begin{equation} \label{eqp}
 \int_{\Omega} \frac{u^{p+1}}{p+1} (T) + d_1\,\frac{4p}{(p+1)^2} \, \int_0^T \int_{\Omega} \,|\nabla (u^{\frac{p+1}{2}})|^2 
 \le  C_p + C_p \int_0^T \int_{\Omega} u^{p+2} .
\end{equation}

Proceeding as in \cite{desvillettes-2024-about-shigesada-kawasaki-teramoto}, we
see that if $u$ is bounded in $\fsL^{p + 2} ([0,T] \times \Omega)$, then
$u \in \fsL^{\infty}([0,T]; \fsL^{p+1}(\Omega))$ and
$\nabla (u^{\frac{p+1}{2}}) \in \fsL^2 ([0,T] \times \Omega)$, so that after using a Sobolev
embedding, an interpolation and an induction procedure, we get that
$u \in \fsL^{\infty}([0,T]; \cap_{p \in [1, \infty)} \fsL^p(\Omega))$ as soon as $u$ is bounded in
$\fsL^q([0,T] \times\Omega)$ for some $q> 1 + \frac{d}2$.  \medskip

Thanks to the improved duality lemma of
\cite{canizo-desvillettes-fellner-2014-improved}, we know that (denoting
$\fsL^{r+0} := \cup_{s>0} \fsL^{r+s}$ here and in the rest of the proof)
\begin{equation} \label{ul2}
  u\in \fsL^{2+0}([0,T]\times\Omega),
\end{equation}
(cf.\ \cite{desvillettes-2024-about-shigesada-kawasaki-teramoto}), so that we
get that $u$ is bounded in $\fsL^{\infty}([0,T]; \cap_{p \in [1, \infty)} \fsL^p(\Omega))$ in the
case when $d=1$ or $d=2$.  Standard arguments involving an approximation
procedure or a continuation result (cf.\
\cite{desvillettes-2024-about-shigesada-kawasaki-teramoto}) yield the result of
Theorem~\ref{proskt} in that case.  See also \cite{lou-ni-wu-1998} for a
slightly different approach for the same result.  \medskip

In order to treat the case of dimension $d=3$ (and $d=4$), we now provide new arguments. We introduce the quantity $m:= m(t,x)$ defined as the solution of the heat equation
$$ \partial_t m - \Delta m = u\,(d_{11}u + d_{12}v), $$
together with the (homogeneous) Neumann boundary condition $\nabla m\cdot \vec n(x) = 0$
on $[0,T] \times \partial\Omega$, and the initial datum $m(0,\cdot) = 0$. By the minimum principle,
it is clear that $m\ge 0$.  \medskip

Defining $\nu := \frac{\mu\,u +m}{u+m} $, we observe that
$\min(1,d_1) \le \nu \le \max(1, d_1+ \sigma\,\|v\|_{\infty})$, and $u+m$ satisfies the equation
$$ \partial_t(u+m) - \Delta ( \nu \, (u+m) ) = r_u\,u, $$
together with the (homogeneous) Neumann boundary condition
$\nabla (\mu\,u + m) \cdot \vec n(x) = 0$ on $[0,T] \times \partial\Omega$, and the initial condition
$(u+m)(0,x) \equiv u_{\init}(x)$. As a consequence, the improved duality lemma of
\cite{canizo-desvillettes-fellner-2014-improved} ensures that
\begin{equation} \label{ulm}
u, m\in \fsL^{2+0}([0,T]\times\Omega),
\end{equation}
where (here and in the sequel) we use the shorthand $L^{p+0} := \cup_{q>p} L^q$.
\par
We now consider the quantity $w := \int_0^t (\mu u + m)$, and observe that $w\ge 0$ and
$\partial_t w \ge 0$. Moreover
$\Delta w = \int_0^t \Delta (\mu u + m) = \int_0^t [\partial_t (u + m) - r_u\,u] = u + m - u_{in} - r_u \,\int_0^t u$.

It satisfies therefore the parabolic equation
\begin{equation}
  \label{eq:evolution-w2}
 \nu^{-1}\, \partial_t w - \Delta w =  u_{\init} +    r_u \,\int_0^t u,
\end{equation}
together with the homogeneous Neumann boundary condition, and the initial condition $w(0, \cdot) = 0$.
\medskip

Observing that $\int_0^t u$ lies in $\fsL^{\infty}([0,T]; \fsL^{2+0}(\Omega))$, we see that
we can use \cref{thm:final-hoelder-regularity} with $p =\infty$ and $q= 2+0$
(recalling that $d \le 4$), and deduce from it that
$\|w\|_{\fsC^{0,\alpha}([0,T] \times \overline{\Omega})} \le C$, for some $\alpha, C>0$.

Then, we use the estimate
$$ 0 \le u \le u+m = \Delta w  +  u_{\init} +    r_u \,\int_0^t u \le \Delta w  +  \|u_{\init}\|_{\infty} +    \frac{r_u}{|\Omega|}
 \,\int_0^t \int_{\Omega} u  + r_u\, \int_0^t \bigg[ u -  |\Omega|^{-1}  \int_{\Omega} u \bigg] \le  \Delta \tilde{w}, $$
where
\begin{equation}\label{stara}
  \tilde{w} := w +  \frac{|x|^2}{2d}\, \bigg(\|u_{\init}\|_{\infty} +  \frac{r_u\, T}{|\Omega|} \,\|u\|_{L^{\infty}([0,T] ; L^1(\Omega))} \bigg) + r_u \, \Delta^{-1} \int_0^t \bigg[ u -  |\Omega|^{-1}  \int_{\Omega} u \bigg],
\end{equation}
and  $\Delta^{-1}$ is defined as the operator going from the subset of $\fsL^2(\Omega)$ consisting of functions with $0$-mean value towards itself, which to a function associates the (unique) solution of the Poisson equation with (homogeneous) Neumann boundary condition.
\medskip

Observing that $ \int_0^t \bigg[ u - |\Omega|^{-1} \int_{\Omega} u \bigg]$ lies in
$\fsL^{\infty}([0,T]; L^{2+0}(\Omega))$, we see that
$\Delta^{-1} \int_0^t \bigg[ u - |\Omega|^{-1} \int_{\Omega} u \bigg]$ lies in
$\fsL^{\infty}([0,T]; \fsW^{2,2+\delta}(\Omega))$ for some $\delta>0$ and therefore, in dimension $d \le 4$, thanks
to a Sobolev embedding, in $\fsL^{\infty}([0,T]; \fsC^{0,\alpha}(\overline{\Omega}))$, for some
$\alpha >0$. The same holds for the function $(t,x) \mapsto \frac{|x|2}{2d}$.

We now use  the one-sided interpolation \cref{inpppg}, namely identity
\eqref{inppalk} with $d=3$, $p=2$ and $q= 2\,\frac{3 - \alpha}{2-\alpha}$, that is
\begin{equation}\label{iare}
  \|u \|_{\fsL^{2\,\frac{3 - \alpha}{2-\alpha}} (\Omega)}^3
  \le   C\, \bigg( \,  \| \tilde{w} \|_{\fsC^{\alpha}(\overline{\Omega})}^{\frac3{3-\alpha}}\,
  \| \nabla u\|_{\fsL^2(\Omega)}^{3\frac{2 -\alpha}{3-\alpha}} +
  \| \tilde{w} \|_{\fsC^{\alpha}(\overline{\Omega})}^{3} \,\bigg)   ,
\end{equation}
which holds for any $u, \tilde{w} : \Omega \subset \R^d \to \R$, such that
$0 \le u \le\Delta \tilde{w}$, and $\alpha \in (0,1)$. Here we use it for a given time $t\in [0,T]$.

Recalling estimate \eqref{ul2}, we get that (with $\tilde{w}$ defined by
\eqref{stara})
\begin{equation}\label{arap2}
  \begin{aligned}
    \|u\|_{\fsL^3([0,T] \times \Omega)}^{3}
    &\le  C\, \|u\|^{\alpha}_{\fsL^{2}  ([0,T] \times \Omega)}\, \|u\|_{\fsL^{\frac{3-\alpha}{1 - \alpha/2}} ([0,T] \times \Omega)}^{3 - \alpha} \le  C\,  \|u\|_{\fsL^{\frac{3-\alpha}{1 - \alpha/2}} ([0,T] \times \Omega)}^{3 - \alpha} \\
    &\le C \,\bigg[\, \int_0^T  \|u\|_{\fsL^{\frac{3-\alpha}{1 - \alpha/2}} ( \Omega)}^{\frac{3 - \alpha}{1 - \alpha/2}} \, \bigg]^{1 - \alpha/2} \le C \, \bigg[\, \int_0^T \bigg(\,  \| \tilde{w} \|_{\fsC^\alpha (\overline{ \Omega})}^{\frac{1}{1 - \alpha/2}} \, \, \| \nabla u\|_{\fsL^2(\Omega)}^2 +    \| \tilde{w} \|_{\fsC^\alpha ( \overline{\Omega})}^{\frac{3-\alpha}{1 - \alpha/2}} \bigg) \, \bigg]^{1 - \alpha/2} \\
    &\le C \,  \| \tilde{w} \|_{\fsL^{\infty}([0,T] ; \fsC^\alpha ( \overline{\Omega}))}\, \| \nabla u\|_{\fsL^2([0,T] \times \Omega)}^{2-\alpha}
      +   C \,  \| \tilde{w} \|_{\fsL^{\infty}([0,T] ; \fsC^\alpha ( \overline{\Omega}))}^{3-\alpha}
      \le C   +   C \, \| \nabla u\|_{\fsL^2([0,T] \times \Omega)}^{2-\alpha}.
  \end{aligned}
\end{equation}
Recalling estimate \eqref{eqp} for $p=1$ and using Young's inequality, we see
that $u$ lies in $\fsL^{3}([0,T] \times \Omega)$.

Since (in dimension $d$) we know that
$u \in \fsL^{\infty}([0,T]; \cap_{p \in [1, \infty)} \fsL^p(\Omega))$ as soon as $u$ is bounded in
$\fsL^q([0,T] \times\Omega)$ for some $q> (1 + \frac{d}2)$, we see that
$u \in \fsL^{\infty}([0,T]; \cap_{p \in [1, \infty)} \fsL^p(\Omega))$ when $d=3$. Standard arguments
then show that existence of a strong solution holds in this situation
(cf. \cite{desvillettes-2024-about-shigesada-kawasaki-teramoto}), so that the
proof of \cref{proskt} is complete in dimension $d=3$.  \medskip

For dimension \(d=4\), note that the interpolation in \eqref{arap2} shows that
\(\| u \|_{\fsL^{3+0}([0,T]\times \Omega)} < +\infty\), so that the conclusion holds again.

\section{Global strong solutions to reaction-diffusion systems in dimension 3 and 4}  \label{quatre}

In this section we present results for reaction-diffusion systems which can be
obtained thanks to \cref{thm:final-hoelder-regularity}.

\subsection{A simple proof for the existence of global strong solutions to a quadratic reaction-diffusion system} \label{se41}

We start with a very short  proof of \cref{pron}.
\medskip

\begin{proof}
  We  observe that when $p>0$, (sufficiently smooth) solutions of system \eqref{u14}, \eqref{u14bc} satisfy the identity
  \begin{equation*}
    \frac{\dd}{\dd t} \sum_{i=1}^4 \int_{\Omega} \frac{u_i^{p+1}}{p+1}
    = -p \sum_{i=1}^4 d_i \int_{\Omega} u_i^{p-1}\,|\nabla u_i|^2 + \sum_{i=1}^4 (-1)^i\, \int_{\Omega} u_i^p\, (u_1\,u_3 - u_2\,u_4),
  \end{equation*}
  so that (for some constant $C_p >0$ depending only on $p$, and for all $T>0$), using Young's inequality,
  \begin{equation} \label{u14p}
    \sum_{i=1}^4 \int_{\Omega} \frac{u_i^{p+1}}{p+1} (T)
    + \frac{4p}{(p+1)^2}  \sum_{i=1}^4  d_i \int_0^T \int_{\Omega} |\nabla ( u_i^{\frac{p+1}2} )|^2
    \le  \sum_{i=1}^4 \int_{\Omega} \frac{u_i^{p+1}}{p+1} (0) + C_p \, \sum_{i=1}^4  \int_0^T\int_{\Omega} {u_i^{p+2}} .
  \end{equation}
  We now observe that if for some $p>0$ and $T>0$, one knows that
  $\sum_{i=1}^4 u_i \in \fsL^{p+2}([0,T] \times \Omega)$, then, using estimate \eqref{u14p}, for all
  $i=1,\dots,4$, $u_i \in \fsL^{\infty}([0,T] ; \fsL^{p+1}(\Omega))$ and
  $u_i^{\frac{p+1}2} \in \fsL^2([0,T] ; H^1(\Omega))$, so that, using Sobolev's embedding
  (for $d>2$), $u_i^{\frac{p+1}2} \in \fsL^2([0,T] ; \fsL^{\frac{2d}{d-2}}(\Omega))$ and
  consequently
  $u_i \in \fsL^{p+1}([0,T] ; \fsL^{(p+1)\,\frac{d}{d-2}}(\Omega))$. Interpolating this last
  information with $u_i \in \fsL^{\infty}([0,T] ; \fsL^{p+1}(\Omega))$ and summing over $i$ , we end
  up with $\sum_{i=1}^4 u_i \in \fsL^{(p+1)\, \frac{d + 2}d}([0,T] \times \Omega)$.  Hence we
  improved the integrability if \((p+1) \frac{(d+2)}{d} > p+2\) which is
  equivalent to \(p>\frac{d}{2} - 1\). 

  As a consequence, using an induction, we see that as soon as
  $\sum_{i=1}^4 u_i \in \fsL^{q}([0,T] \times \Omega)$ with $q>1 + \frac{d}2$, then
  $\sum_{i=1}^4 u_i \in \fsL^{\infty}([0,T] ; \cup_{p\in [1, \infty[} \fsL^p( \Omega ))$ (when $d=1$ or $d=2$,
  this is still true though the Sobolev embedding is used sligthly differently in
  this case).

  Then, still assuming that we a priori know that $\sum_{i=1}^4 u_i \in \fsL^{q}([0,T] \times \Omega)$ with
  $q>1 + \frac{d}2$, using maximal regularity, we get an estimate for $\partial_t u_i$
  and $\Delta u_i$ in $ \cup_{p\in [1, \infty[} \fsL^p([0,T] \times \Omega )$. Using a standard  approximation scheme (see for example \cite{canizo-desvillettes-fellner-2014-improved}), we end
  up with the statement of \cref{pron}.
  \medskip

  We have thus reduced the proof of \cref{pron} to proving the a priori estimate: $\sum_{i=1}^4 u_i \in \fsL^{q}([0,T] \times \Omega)$ with
  $q>1 + \frac{d}2$. 
  \medskip

  We now observe that
  $$ \partial_t  \bigg( \sum_{i=1}^4  u_i \bigg) - \Delta \bigg( \sum_{i=1}^4 d_i \, u_i \bigg) = 0, $$
  so that
  \begin{equation} \label{didia}
    \partial_t  \bigg( \sum_{i=1}^4  u_i \bigg) -   \Delta \bigg( \mu\,  \sum_{i=1}^4  u_i \bigg) = 0,
  \end{equation}
  where
  \begin{equation} \label{didi}
    \mu := \frac{ \sum_{i=1}^4 d_i \, u_i }{\sum_{i=1}^4  u_i }
    \in [\min_{i=1,\dots,4} d_i, \max_{i=1,\dots,4} d_i].
  \end{equation}
  Note also that $\sum_{i=1}^4  u_i$ satisfies the homogeneous Neumann boundary condition.

  Using the improved duality lemma of
  \cite{canizo-desvillettes-fellner-2014-improved}, one directly gets that
  $\sum_{i=1}^4 u_i \in \fsL^q([0,T] \times \Omega)$ for some $q>2$, so that when $d=1$ or $d=2$,
  the statement of \cref{pron} is proven.

  \medskip

  We show below how to prove that $\sum_{i=1}^4 u_i \in \fsL^q([0,T] \times \Omega)$ for some
  $q>3$ (in any dimension $d$), so that the statement of \cref{pron} also holds
  when $d=3$ or $d=4$.

  For this, we use estimate \eqref{u14p} when $p=1$, that is
  \begin{equation} \label{u14p1}
    \sum_{i=1}^4 \int_{\Omega} \frac{u_i^{2}}{2} (T) +  \sum_{i=1}^4  d_i \int_0^T \int_{\Omega} |\nabla  u_i|^2 \le  \sum_{i=1}^4 \int_{\Omega} \frac{u_i^{2}}{2} (0) + C_1
    \, \sum_{i=1}^4 \int_0^T\int_{\Omega} {u_i^{3}} ,
  \end{equation}
  and
  introduce $w : = \int_0^t \bigg(\sum_{i=1}^4 d_i \, u_i \bigg)$. We notice that $w\ge 0$ and $\partial_t w \ge 0$. Moreover,
  \begin{equation} \label{newu}
    \Delta w =  \int_0^t  \Delta \bigg(\sum_{i=1}^4 d_i \, u_i \bigg)  = \int_0^t  \partial_t  \bigg( \sum_{i=1}^4  u_i \bigg)  =   \sum_{i=1}^4  u_i  -  \sum_{i=1}^4  u_i^\init ,
  \end{equation}
  so that
  \begin{equation} \label{newuu}
    \partial_t w - \mu\, \Delta w = \sum_{i=1}^4 d_i \, u_i - \frac{ \sum_{i=1}^4 d_i \, u_i }{\sum_{i=1}^4  u_i }\,
    \bigg(  \sum_{i=1}^4  u_i  -  \sum_{i=1}^4  u_i^\init  \bigg)  = \mu\, \sum_{i=1}^4  u_i^\init .
  \end{equation}
  Finally, $\nabla w\cdot \vec n = 0$ on $[0,T] \times \partial\Omega$, and $w(0,\cdot) = 0$.

  As a consequence, we can use \cref{thm:final-hoelder-regularity}, and get that
  (for all $T>0$) $w \in \fsC^{\alpha}([0,T] \times {\overline{\Omega}} )$, for some $\alpha >0$.  Then,
  we use the following interpolation inequality:
  \begin{equation}\label{esalp}
    \| \Delta {w} \|_{\fsL^{2\,\frac{3 - \alpha}{2-\alpha}} (\Omega)}^3 \le  C\, \| {w} \|_{\fsC^{\alpha}(\overline{\Omega})}^{\frac3{3-\alpha}}\, \| \nabla \Delta {w} \|_{\fsL^2(\Omega)}^{3\frac{2 -\alpha}{3-\alpha}} ,
  \end{equation}
  which yields thanks to the identity \eqref{newu}
  \begin{equation} \label{esalp2}
    \bigg\|  \sum_{i=1}^4  u_i  -  \sum_{i=1}^4  u_i^\init \bigg\|_{\fsL^{2\,\frac{3 - \alpha}{2-\alpha}} (\Omega)}^3
    \le  C\, \| {w} \|_{\fsC^{\alpha}(\overline{\Omega})}^{\frac3{3-\alpha}}\, \bigg\| \nabla \bigg[  \sum_{i=1}^4  u_i  -  \sum_{i=1}^4  u_i^\init \bigg]\, \bigg\|_{\fsL^2(\Omega)}^{3\frac{2 -\alpha}{3-\alpha}} .
  \end{equation}
  Then (taking into account the assumption on the initial datum and the estimate
  $w \in \fsC^{\alpha}([0,T] \times {\overline{\Omega}} )$, denoting by $C$ here and in the sequel
  constants depending only on $T$, $\Omega$, $\alpha$, $d_i$ and initial data), we get for
  any $j=1,\dots,4$
  \begin{equation} \label{utl}
    \|u_j\|_{\fsL^{2\,\frac{3 - \alpha}{2-\alpha}} (\Omega)}^3  \le C \bigg(1 +  \bigg\| \nabla \bigg[  \sum_{i=1}^4  u_i \bigg]\, \bigg\|_{\fsL^2(\Omega)}^{3\frac{2 -\alpha}{3-\alpha}} \bigg) .
  \end{equation}
  Noticing first  that $2\,\frac{3 - \alpha}{2-\alpha} > 3$, we see that (for any $j=1,..,4$)
  \begin{equation} \label{utl2}
    \int_{\Omega} u_j^3 \le C\,  \|u_j\|_{\fsL^{2\,\frac{3 - \alpha}{2-\alpha}} (\Omega)}^3 \le  C \, \bigg(1 +  \sum_{i=1}^4 \,\| \nabla    u_i \|_{\fsL^2(\Omega)}^{3\frac{2 -\alpha}{3-\alpha}} \bigg) .
  \end{equation}

  Noticing then that $3\frac{2 -\alpha}{3-\alpha} < 2$, we also see,
  using estimate \eqref{u14p1} and Young's inequality, that
  $\nabla u_i \in \fsL^2([0,T] \times \Omega)$, so that finally, we rewrite estimate \eqref{utl}
  as
  \begin{equation} \label{utfi}
    \|u_j\|_{\fsL^{2\,\frac{3 - \alpha}{2-\alpha}} (\Omega)}^{2\,\frac{3 - \alpha}{2-\alpha}}  \le C \bigg(1 +  \bigg\| \nabla \bigg[  \sum_{i=1}^4  u_i \bigg]\, \bigg\|_{\fsL^2(\Omega)}^{3\frac{2 -\alpha}{3-\alpha}} \bigg)^{\frac23\,\frac{3 - \alpha}{2-\alpha}} \le C  \, \bigg( 1+ \sum_{i=1}^4  \| \nabla    u_i \|^ {2}_{\fsL^2(\Omega)}  \bigg).
  \end{equation}
  Integrating in time, we see that
  \begin{equation} \label{utl3}
    \int_0^T \int_{\Omega} u_j^{2 \, \frac{3 - \alpha}{2-\alpha}} \le C\, \bigg( 1+ \sum_{i=1}^4 \int_0^T  \| \nabla    u_i \|^ {2}_{\fsL^2(\Omega)} \bigg) \le C  \, \bigg( 1+ \sum_{i=1}^4  \| \nabla    u_i \|^ {2}_{\fsL^2([0,T] \times \Omega)} \bigg) \le C  ,
  \end{equation}
  which ends the proof of \cref{pron} in the case when $d=3$ or $d=4$, since
  $2 \frac{3 - \alpha}{2 - \alpha} > 3 = 1 + \frac42$.
\end{proof}

\subsection{General dissipative systems with quadratic intermediate sums}
\label{sec:quadratic-intermediate-sums}

In this subsection, we considerably generalise the result of
Subsection~\ref{se41}, and prove Theorem~\ref{pron2}.  \medskip

We consider reaction-diffusion systems which write
\begin{equation}\label{uifi}
  \partial_t u_i - d_i \Delta u_i = f_i(u_1,\dots,u_m), \qquad i=1,\dots,m,
\end{equation}
together with the Neumann boundary condition and initial condition
\begin{equation}\label{uifi2}
  \nabla u_i \cdot n = 0 \quad {\hbox{ on }} [0,T] \times\partial\Omega, \qquad u_i(0,\cdot) = u^{\init}_i  \qquad i=1,\dots,m ,
\end{equation}
and introduce the general set of conditions:

\medskip
\begin{enumerate}[label={(A\arabic*)}]
  \item\label{ass:mass:local-lipschitz} (Local Lipschitz and quasi-positivity)
        The functions $f_i:\R^m \to \R$ are locally Lipschitz-continuous and satisfy
        the quasi-positivity condition, for $i=1,\dots,m$,
  $$ f_i(u_1,\dots,u_m) \ge 0 \qquad {\hbox{ when }} \qquad u_i =0  \quad {\hbox{ and }}  \quad u_j \ge 0 {\hbox{ for }} j \neq i ; $$
  \item\label{ass:mass:mass-dissipation} (Mass dissipation) There exist
        $\beta_1,\dots,\beta_m>0$ such that for any $u_1,\dots,u_m \ge 0$,
  $$ \sum_{i=1}^m \beta_i \,f_i(u_1,\dots,u_m) \le 0 ; $$
  \item\label{ass:mass:quadratic} (Quadratic intermediate sums) There exists a
        $m \times m$ lower-triangular matrix $A = (a_{ij})_{i,j = 1,\dots,m}$ with
        nonnegative elements $a_{ij}\ge 0$ and strictly positive diagonal
        coefficients $a_{ii} >0$ (so that $A$ is invertible), and $K>0$ such
        that for $i=1,\dots,m$ it holds that
  $$ \sum_{j=1}^i a_{ij}\, f_j(u_1,\dots,u_m) \le K \Big(1 + \sum_{i=1}^m u_i \Big)^2   {\hbox{ for }}  u_i \ge 0. $$
\end{enumerate}

Those conditions are used in the following

\begin{proposition} \label{npp} Let $\Omega$ be a smooth bounded open subset of $\R^d$
  with dimension $d\le 4$.  Further consider coefficients $d_i>0$ for
  $i=1,\dots,m$, and reaction functions $f_i:\R^m \to \R$ satisfying conditions
  \ref{ass:mass:local-lipschitz}, \ref{ass:mass:mass-dissipation} and
  \ref{ass:mass:quadratic}.  We also assume that $u^\init_i \ge 0$ lies in
  $\fsC^2(\overline{\Omega})$ and is compatible with the Neumann boundary condition,
  for $i=1,\dots,m$.

  Then there exists a global (defined on $\R_+ \times \Omega$) and strong (all terms are
  defined a.e.) solution to system \eqref{uifi}, \eqref{uifi2}.
\end{proposition}

We refer to \cite{fellner-morgan-tang-2020-global} for detailed references on
the history of general problems using the same kind of assumptions.  Under
assumptions close to \ref{ass:mass:local-lipschitz},
\ref{ass:mass:mass-dissipation} and the extra assumption
$ |f_j(u_1,..,u_m)| \le K \Big(1 + \sum_{i=1}^m u_i \Big)^{2 + \eta}$ with $\eta>0$ small,
existence of a unique global classical solution to \eqref{uifi} is shown in
\cite{fellner-morgan-tang-2020-global, ,fellner-morgan-tang-2021-uniform},
together with uniformity in large time of the $\fsL^\infty$ bounds.

With assumptions close to \ref{ass:mass:local-lipschitz},
\ref{ass:mass:mass-dissipation}, \ref{ass:mass:quadratic}, the
evolution~\eqref{uifi} is considered in dimension $d=1$ in
\cite{yang-kostianko-sun-tang-zelik-2024-nonconcentration,sun-tang-yang-2023-analysis},
and in that case it is even possible to allow for a cubic (or slightly more than
cubic) intermediate sum condition.  Uniformity in large time of the $\fsL^\infty$
bounds is also provided.  In \cite{morgan-tang-2020-boundedness-lyapunov},
global existence of strong solutions to \eqref{uifi} is obtained in dimension
$d=2$ for assumptions close to \ref{ass:mass:local-lipschitz},
\ref{ass:mass:mass-dissipation}, \ref{ass:mass:quadratic} together with
uniformity in large time of the $\fsL^\infty$ bounds.\medskip

Note that in comparison with the hypothesis
$ f_j(u_1,\dots,u_m) \le K \Big(1 + \sum_{i=1}^m u_i \Big)^2$, hypothesis
\ref{ass:mass:quadratic} is more general and allows to consider a variety of
chemical reaction in dimension $d=3$ and $d=4$.  We give some examples of such
applications.

\begin{remark}
  Similar to the slightly superquadratic growth of the reaction term in
  \cite{fellner-morgan-tang-2020-global}, an inspection of the proof shows that
  we can allow a slightly superquadratic growth in \ref{ass:mass:quadratic},
  i.e.\ we can allow
  $$ \sum_{j=1}^i a_{ij}\, f_j(u_1,\dots,u_m) \le K \Big(1 + \sum_{i=1}^m u_i \Big)^{2+\eta}   {\hbox{ for }}  u_i \ge 0, $$
  for some small \(\eta\) (depending on the different diffusion constants).
\end{remark}

\medskip

\begin{proof}
  Using an idea of \cite{fellner-morgan-tang-2020-global}, we first notice that
  the assumption \ref{ass:mass:mass-dissipation} can be replaced by the stronger
  assumption $ \sum_{i=1}^m \beta_i\, f_i(u_1,\dots,u_m) = 0$.  Indeed one can add an extra
  unknown $u_{m+1}$ satisfying the equation
  \begin{equation}\label{uifi3}
    \partial_t u_{m+1} - d_1 \Delta u_{m+1} = f_{m+1}(u_1,..,u_m), \qquad i=1,\dots,m,
  \end{equation}
  together with the Neumann boundary condition and initial condition
  \begin{equation}\label{uifi4}
    \nabla u_{m+1} \cdot n = 0 \quad {\hbox{ on }} [0,T] \times\partial\Omega, \qquad u_{m+1}(0,\cdot) = 0,
  \end{equation}
  where $f_{m+1} := -\sum_{i=1}^m \beta_i\, f_i \ge 0$.

  One can check that the new system indeed satisfies conditions
  \ref{ass:mass:local-lipschitz}, \ref{ass:mass:mass-dissipation},
  \ref{ass:mass:quadratic}, where moreover
  $ \sum_{i=1}^{m+1} \beta_i f_i(u_1,\dots,u_m) = 0$. In order to check
  condition~\ref{ass:mass:quadratic}, one defines $a_{m+1,i} := \beta_i$ for
  $i=1,\dots,m$ and $a_{m+1,m+1} := 1$.  One also defines $\beta_{m+1}:=1$.
\medskip

  Then, we consider the change of variables $v:= Au$ (with
  $u:= (u_1,\dots,u_m)$, $v := (v_1,\dots,v_m)$), and define
  $B= (b_{ij})_{i,j = 1..m}$ as the inverse of $A$. We observe that thanks to
  the quasi-positivity (assumption~\ref{ass:mass:local-lipschitz}), $u$ is
  nonnegative (and so is $v$ since $v=Au$). Moreover, $v$ satisfies the system
  \begin{equation}\label{uifi5}
    \partial_t v_i - d_i \Delta v_i = g_i(u_1,..,u_m) + \sum_{k<i} c_{ik}\, \Delta v_k, \qquad i=1,\dots,m,
  \end{equation}
  together with the Neumann boundary condition and initial condition
  \begin{equation}\label{uifi6}
    \nabla v_i \cdot n = 0 \quad {\hbox{ on }} [0,T] \times\partial\Omega, \qquad v_i(0,\cdot) = (A \,u^{\init})_i  \qquad i=1,\dots,m ,
  \end{equation}
  where $g = Af$ (with $f:= (f_1,\dots,f_m)$, $g := (g_1,\dots,g_m)$), and
  $c_{ik} := \sum_{j=k}^{i-1} (d_j-d_i)\, a_{ij}\, b_{jk}$ (for $k<i$).  \medskip

  An energy estimate immediately yields (for all $i=1,\dots,m$, and where $\lesssim$
  means ``less than a constant times'', where the constant depends only on the
  data of the problem and $T>0$ such that $t \in [0,T]$)
  $$ \int_{\Omega} v_i(t)^2 + \int_0^t\int_{\Omega}  |\nabla v_i|^2  \lesssim \int_{\Omega} v_i(0)^2 +  \int_0^t\int_{\Omega}g_i\, v_i + \sum_{j < i}  \int_0^t\int_{\Omega} |\nabla v_i \cdot \nabla v_j|, $$
  so that thanks to assumption~\ref{ass:mass:quadratic} and Young's inequality,
  $$ \int_{\Omega} v_i(t)^2 + \int_0^t\int_{\Omega}  |\nabla v_i|^2  \lesssim \int_{\Omega} v_i(0)^2 +  \sum_{j=1}^m \int_0^t\int_{\Omega}( 1 + |v_j|^3) + \sum_{j < i}  \int_0^t\int_{\Omega} | \nabla v_j|^2.$$
  Thanks to an induction, we get for all $i=1,\dots,m$,
  $$
  \int_{\Omega} v_i(t)^2 + \int_0^t\int_{\Omega}  |\nabla v_i|^2  \lesssim \sum_{j=1}^m \int_{\Omega} v_j(0)^2 +  \sum_{j=1}^m \int_0^t\int_{\Omega}( 1 + |v_j|^3) .
  $$
  Using then another induction, we also get for all $i=1,\dots,m$,
  \begin{equation}\label{fff}
    \int_{\Omega} u_i(t)^2 + \int_0^t\int_{\Omega}  |\nabla u_i|^2
    \lesssim   \sum_{j=1}^m  \left(\int_{\Omega} u_j(0)^2 +  \int_0^t\int_{\Omega}( 1 + |u_j|^3) \right) .
  \end{equation}
  At this point, the proof becomes almost identical to the proof proposed in the
  previous subsection.  We introduce
  $w : = \int_0^t \bigg(\sum_{i=1}^m d_i \,\beta_i\, u_i \bigg)$ and
  $\mu := \frac{ \sum_{i=1}^m d_i \, \beta_i\, u_i }{\sum_{i=1}^m \beta_i\, u_i }$. We notice
  that $w\ge 0$ and $\partial_t w \ge 0$. Moreover,
  \begin{equation} \label{newupr}
    \Delta w =  \int_0^t  \Delta \bigg(\sum_{i=1}^m d_i \, \beta_i\, u_i \bigg)  = \int_0^t \bigg[ \partial_t  \bigg( \sum_{i=1}^m \beta_i\,  u_i \bigg)  +  \sum_{i=1}^m \beta_i\, f_i  \bigg] =   \sum_{i=1}^m  \beta_i\, u_i  -  \sum_{i=1}^m \beta_i\, u_i^\init ,
  \end{equation}
  so that
  \begin{equation} \label{newuupr}
    \partial_t w - \mu\, \Delta w = \sum_{i=1}^m d_i \, \beta_i \,u_i - \frac{ \sum_{i=1}^m d_i \, \beta_i\, u_i }{\sum_{i=1}^m  \beta_i\, u_i }\,
    \bigg(  \sum_{i=1}^m  \beta_i\, u_i  -  \sum_{i=1}^m \beta_i\, u_i^\init  \bigg)  = \mu\, \sum_{i=1}^m \beta_i\, u_i^\init .
  \end{equation}
  Finally, $\nabla w\cdot \vec n = 0$ on $[0,T] \times \partial\Omega$, and $w(0,\cdot) = 0$.

  As a consequence, we can use \cref{thm:final-hoelder-regularity}, and get for
  all $T>0$ that $w \in \fsC^{\alpha}([0,T] \times {\overline{\Omega}} )$, for some $\alpha >0$.
  Then, we use interpolation inequality \eqref{esalp} which yields thanks to the
  identity \eqref{newupr} (for any $t \in [0,T]$)
  \begin{equation} \label{esalp2pr}
    \bigg\|  \sum_{i=1}^m \beta_i\,  u_i  -  \sum_{i=1}^m \beta_i\,  u_i^\init \bigg\|_{\fsL^{2\,\frac{3 - \alpha}{2-\alpha}} (\Omega)}^3
    \le  C\, \| {w} \|_{\fsC^{\alpha}(\overline{\Omega})}^{\frac3{3-\alpha}}\, \bigg\| \nabla \bigg[  \sum_{i=1}^m \beta_i\,  u_i  -  \sum_{i=1}^m \beta_i\,  u_i^\init \bigg]\, \bigg\|_{\fsL^2(\Omega)}^{3\frac{2 -\alpha}{3-\alpha}} ,
  \end{equation}
  and, for any $j=1,\dots,m$ (and any $t \in [0,T]$),
  \begin{equation} \label{utlpr}
    \int_{\Omega} u_j^3 \le C\,  \|u_j\|_{\fsL^{2\,\frac{3 - \alpha}{2-\alpha}} (\Omega)}^3  \le C \bigg(1 +  \bigg\| \nabla \bigg[  \sum_{i=1}^m  u_i \bigg]\, \bigg\|_{\fsL^2(\Omega)}^{3\frac{2 -\alpha}{3-\alpha}} \bigg)  \le  C \, \bigg(1 +  \sum_{i=1}^m \,\| \nabla    u_i \|_{\fsL^2(\Omega)}^{3\frac{2 -\alpha}{3-\alpha}} \bigg) .
  \end{equation}
  Noticing then that $3\frac{2 -\alpha}{3-\alpha} < 2$, we also see, using estimate
  \eqref{fff} and Young's inequality, that $\nabla u_i \in \fsL^2([0,T] \times \Omega)$, so that
  finally, we can use estimate \eqref{utlpr} to get
  \begin{equation}
    \|u_j\|_{\fsL^{2\,\frac{3 - \alpha}{2-\alpha}} (\Omega)}^{2\,\frac{3 - \alpha}{2-\alpha}}  \le C \bigg(1 +  \bigg\| \nabla \bigg[  \sum_{i=1}^m  u_i \bigg]\, \bigg\|_{\fsL^2(\Omega)}^{3\frac{2 -\alpha}{3-\alpha}} \bigg)^{\frac23\,\frac{3 - \alpha}{2-\alpha}} \le C  \, \bigg( 1+ \sum_{i=1}^m  \| \nabla    u_i \|^ {2}_{L^2(\Omega)}  \bigg).
  \end{equation}
  Integrating in time, we see that
  \begin{equation}
    \int_0^T \int_{\Omega} u_j^{2 \, \frac{3 - \alpha}{2-\alpha}} \le C\, \bigg( 1+ \sum_{i=1}^m \int_0^T  \| \nabla    u_i \|^ {2}_{L^2(\Omega)} \bigg) \le C  \, \bigg( 1+ \sum_{i=1}^m \| \nabla    u_i \|^ {2}_{L^2([0,T] \times \Omega)} \bigg) \le C  .
  \end{equation}
  We have thus shown for all $j=1,\dots,m$ the estimates
  $\|u_j\|_{\fsL^{3+0}([0,T] \times \Omega) }\le C$, $\|u_j\|_{\fsL^{\infty}([0,T] ; \fsL^2(\Omega))} \le C$,
  $\|u_j\|_{\fsL^{2}([0,T] ; \fsH^1(\Omega))} \le C$.  \medskip

  We start then an induction, defining $p_1 := 3 + \delta$ for some small $\delta>0$.  We
  already know that $\|u_j\|_{\fsL^{p_1 + 0}([0,T] \times \Omega) } \le C$ for all
  $j=1,\dots,m$.

  Suppose that for some $p_n>1$, one has for all $j=1,\dots,m$ the estimate
  $\|u_j\|_{\fsL^{p_n + 0}([0,T] \times \Omega) } \le C$.  This implies
  $\|v_j\|_{\fsL^{p_n +0}([0,T] \times \Omega) } \le C$ for all \(j=1,\dots,m\).  Using
  equation \eqref{uifi5} for $i=1$ and assumption~\ref{ass:mass:quadratic}, we
  see that $\partial_t v_1 - d_1\, \Delta v_1$ is bounded in
  $\fsL^{p_n/2 + 0}([0,T] \times \Omega)$. Using the properties of the heat equation, we
  see that $\|v_1\|_{\fsL^{p_{n+1} + 0}([0,T] \times \Omega) } \le C$, with
  $\frac1{p_{n+1}} = \frac2{p_n} - \frac2{d+2}$ in dimension $d$ (provided that
  $p_{n+1} \in [1, \infty)$).  The maximal regularity estimate for the heat equation
  also implies that $\Delta v_1$ is bounded in
  $\fsL^{p_n/2 + 0}([0,T] \times \Omega)$. Equation \eqref{uifi5} for $i=2$ and
  assumption~\ref{ass:mass:quadratic} ensures that $\partial_t v_2 - d_2\, \Delta v_2$ is
  bounded in $\fsL^{p_n/2 + 0}([0,T] \times \Omega)$, so that
  $\|v_2\|_{\fsL^{p_{n+1} + 0}([0,T] \times \Omega) } \le C$ and $\Delta v_2$ is bounded in
  $\fsL^{p_n/2 + 0}([0,T] \times \Omega)$. Proceeding in the same way for $v_3,\dots,v_m$,
  we see for all $j=1,\dots,m$ that
  $\|v_j\|_{\fsL^{p_{n+1} + 0}([0,T] \times \Omega) } \le C$.  Finally, for all $j=1,\dots,m$
  it holds that $\|u_j\|_{\fsL^{p_{n+1} + 0}([0,T] \times \Omega) } \le C$.

  Observing that the sequence $(1/p_n)$ is decreasing (as long at it is defined)
  when $p_1 > 1 + \frac{d}{2}$, which is true when the dimension is $d=3$ or
  $d=4$, we obtain for all $j=1,\dots,m$ that $\|v_j\|_{\fsL^{p}([0,T] \times \Omega) } \le C$
  and $\|\Delta v_j\|_{\fsL^{p}([0,T] \times \Omega) } \le C$ for all $p\in [1, \infty)$, and in fact
  (using the properties of the heat equation one last time)
  $\|v_j\|_{\fsL^\infty([0,T] \times \overline{\Omega} ) } \le C$. Finally, we obtain
  $\|u_j\|_{\fsL^\infty([0,T] \times \overline{\Omega} ) } \le C$.  \medskip

  Standard approximation or continuation procedures (using assumption
  \ref{ass:mass:local-lipschitz}) allow then to conclude the proof of
  Proposition~\ref{npp}.
\end{proof}

Among systems satisfying the assumptions of Proposition~\ref{npp}, we exhibit the following ones.
\medskip

First, we recall system \eqref{uum} introduced in the introduction, coming out
from the reversible chemical reaction
$b_1S_1 + \dots + b_mS_m \rightleftharpoons S_{m+1} + S_{m+2}$ where, $m \in\N\setminus\{0\}$ and for
$i \in 1,\dots,m$, we have $b_i \in\N\setminus\{0\}$. This system writes
\begin{equation}
  \left\{ \begin{lgathered}
  \partial_t u_{m+1} - d_{m+1} \Delta u_{m+1} = \prod_{i=1}^m u_i^{b_i} - u_{m+1}\,u_{m+2}, \\
  \partial_t u_{m+2} - d_{m+2} \Delta u_{m+2} =  \prod_{i=1}^m u_i^{b_i} - u_{m+1}\,u_{m+2},  \\
  \partial_t u_i - d_i \Delta u_i
  = -  b_i\,( \prod_{i=1}^mu_i^{b_i} - u_{m+1}\,u_{m+2})
  \quad\text{ for }\quad i=1,\dots,m.
 \end{lgathered} \right.
\end{equation}

The proof of Theorem~\ref{pron2} is then very short: We see indeed that
assumption \ref{ass:mass:mass-dissipation} holds with $\beta_{m+1}=\beta_{m+2} = 1$ and
$\beta_i = \frac{2}{mb_i}$ for $i\leq m$, while assumption \ref{ass:mass:quadratic}
holds with $A = (a_{ij})_{1\leq i, j \leq m}$ where $a_{ii} = 1$ for $1 \leq i \leq m + 2$,
$a_{m+1, 1} = a_{m+2, 1}= \frac{1}{b_1}$ and $a_{ij} = 0$ otherwise. As a
consequence, Proposition~\ref{npp} can be used, which concludes the proof of
Theorem~\ref{pron2}.  \medskip

We can also extract some systems from
\cite{morgan-tang-2020-boundedness-lyapunov}, which have a structure close to
the previous system, and extend existence of global strong solutions to
dimension $d=3$ and $d=4$ for those systems.

We first write down such a system, coming out from the reversible chemical
reaction $S_1 + 2S_2 \rightleftharpoons S_2 + S_3$, which writes
\begin{equation}
  \left\{ \begin{array}{l}
\partial_t u_1 - d_1 \Delta u_1 = u_2\, u_3 - u_1\,u_2^2, \\
  \partial_t u_2 - d_2 \Delta u_2 =  u_2\, u_3 - u_1\,u_2^2,  \\
 \partial_t u_3 - d_3 \Delta u_3 = -  \,( u_2\, u_3 - u_1\,u_2^2).
 \end{array} \right.
\end{equation}
We see that assumption \ref{ass:mass:mass-dissipation} holds with $\beta_1=\beta_2 = 1$
and $\beta_3 = 2$, while assumption \ref{ass:mass:quadratic} holds with
\begin{equation}
  A :=  \left( \begin{array}{ccc}
 1 & 0 & 0 \\
 0 & 1 & 0 \\
 1 & 1 & 2
 \end{array} \right). 
\end{equation}

We then write down a system coming out from the reversible chemical reaction
$p\, S_1 + q \,S_2 \rightleftharpoons 2\, S_3$ (with $p, q\in \N \setminus \{0\}$), which writes
\begin{equation}
  \left\{ \begin{lgathered}
      \partial_t u_1 - d_1 \Delta u_1 = p \,(u_3^2 - u_1^p\,u_2^q), \\
      \partial_t u_2 - d_2 \Delta u_2 = q \,(u_3^2 - u_1^p\,u_2^q),  \\
      \partial_t u_3 - d_3 \Delta u_3 = - 2 \,(u_3^2 - u_1^p\,u_2^q).
    \end{lgathered} \right.
\end{equation}
We see that assumption \ref{ass:mass:mass-dissipation} holds with $\beta_1=\beta_2 = 2$
and $\beta_3 = p+q$, while assumption \ref{ass:mass:quadratic} holds with
\begin{equation}
  A :=  \left( \begin{array}{ccc}
 1 & 0 & 0 \\
 0 & 1 & 0 \\
 2 & 2 & p+q
 \end{array} \right). 
\end{equation}

For the systems described above, Proposition~\ref{npp} enables (as in the proof
of Theorem~\ref{pron2}) to get global strong solutions in dimension $d=3$ and
$d=4$ for all diffusion rates $d_i >0$, whereas such solutions were built only
in dimension $d=1$ or $d=2$, or in higher dimension for diffusion rates
satisfying constraints, in \cite{morgan-tang-2020-boundedness-lyapunov}. Note
however that the conditions on the initial data considered in
Proposition~\ref{npp} are significantly more stringent than those of
\cite{morgan-tang-2020-boundedness-lyapunov}.  Finally, we indicate that
existence of global strong solutions for superquadratic systems in high
dimension is not always possible, since very interesting examples of blowups
exist, cf.\ \cite{pierre-schmitt-2000-blow,pierre-schmitt-2023-examples}.

\appendix

\section{Proof of the one-sided interpolation estimates}\label{AB}

In this section, we present a proof of Proposition~\ref{inpppg}, which consists
of the interpolation inequalities \eqref{inppalk1} and \eqref{inppalk}.  Those
interpolations are used in order to obtain inequality \eqref{iare}.  \medskip

We first capture the geometry by the Lemma below.
\begin{lemma}\label{thm:star-shaped-radius}
  Let $\Omega \subset \R^d$ be a bounded \(\fsC^2\) domain.  Then there exists \(R_0>0\)
  such that for any \(0<R\le R_0\) and any \(x_0 \in \Omega\), there exists
  \(\hat x_0 \in \Omega\) such that \(B_d(\hat x_0,R/4) \subset \Omega\) and \(B_d(x_0,R) \cap \Omega\) is
  star-shaped with core \(B_d(\hat x_0,R/4)\).
\end{lemma}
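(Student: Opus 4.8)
The plan is to reduce the statement to two cases according to the distance of $x_0$ to the boundary and, in the case where $x_0$ is close to $\partial\Omega$, to flatten the boundary by a $C^2$ graph and exploit a convexity trick. As a preliminary I would record the standard uniform chart property of a bounded $C^2$ domain (the same one already used in the proof of \cref{thm:final-hoelder-regularity}): there are constants $K_0,L_0>0$, depending only on $\Omega$, such that for every $y_0\in\partial\Omega$, after a rotation putting the inner normal at $y_0$ along $e_d$, one has $\Omega\cap B_d(y_0,L_0)=\{x_d>\psi(x')\}\cap B_d(y_0,L_0)$ for a $C^2$ function $\psi$ with $\psi(0)=0$, $\nabla\psi(0)=0$, $\|\nabla^2\psi\|_\infty\le K_0$, hence $|\psi(x')|\le\tfrac{K_0}{2}|x'|^2$ and $|\nabla\psi(x')|\le K_0|x'|$ on $B_{d-1}(0,L_0)$ (here and below I write $x=(x',x_d)$ for the chart coordinates). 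I then fix $R_0:=\min\{L_0/2,\ 1/(4K_0)\}$, take $0<R\le R_0$ and $x_0\in\Omega$, and set $d_0:=\mathrm{dist}(x_0,\partial\Omega)>0$. If $d_0\ge R$ then $B_d(x_0,R)\subseteq\Omega$ is convex and I simply take $\hat x_0:=x_0$; the conclusion is immediate since a convex set is star-shaped with respect to each of its points and $B_d(x_0,R/4)\subseteq B_d(x_0,R)\subseteq\Omega$.

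So assume $d_0<R$. I would pick $y_0\in\partial\Omega$ realising the distance to $x_0$ (possible since $\partial\Omega$ is compact) and work in the chart at $y_0$. Since $y_0$ minimises $|x_0-\cdot|$ on $\partial\Omega$, the vector $x_0-y_0$ is normal at $y_0$, so in these coordinates $x_0=(0,d_0)$; I set $\hat x_0:=(0,R/2)$, that is $\hat x_0=y_0-\tfrac R2\,\vec n(y_0)$. The two inclusions $B_d(\hat x_0,R/4)\subseteq B_d(x_0,R)$ (because $|\hat x_0-x_0|=|R/2-d_0|<R/2$ as $0<d_0<R$) and $B_d(x_0,R)\subseteq B_d(y_0,d_0+R)\subseteq B_d(y_0,2R)\subseteq B_d(y_0,L_0)$ are routine; combined with the fact that any $c=(c',c_d)\in B_d(\hat x_0,R/4)$ satisfies $|c'|<R/4$ and $c_d>R/4>\tfrac{K_0}{2}(R/4)^2\ge\psi(c')$ (using $R\le 1/(4K_0)$), they give $B_d(\hat x_0,R/4)\subseteq\Omega\cap B_d(y_0,L_0)\subseteq\Omega$.

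The heart of the proof is the star-shapedness. Let $c\in B_d(\hat x_0,R/4)$ and $p=(p',p_d)\in B_d(x_0,R)\cap\Omega$, and set $q(s)=(1-s)c+sp=:(q'(s),q_d(s))$ for $s\in[0,1]$. By convexity $q(s)\in B_d(x_0,R)\subseteq B_d(y_0,L_0)$, so it suffices to prove $h(s):=q_d(s)-\psi(q'(s))>0$ on $[0,1]$. The key observation is that $g(x'):=\psi(x')+\tfrac{K_0}{2}|x'|^2$ is convex on $B_{d-1}(0,L_0)$ (its Hessian is $\nabla^2\psi+K_0 I\succeq0$); applying $g((1-s)c'+sp')\le(1-s)g(c')+s\,g(p')$ together with the identity $(1-s)|c'|^2+s|p'|^2-|q'(s)|^2=s(1-s)|c'-p'|^2$, one gets after rearrangement
\[
  h(s)\ \ge\ (1-s)\,a+s\,b-M\,s(1-s),\qquad a:=c_d-\psi(c'),\quad b:=p_d-\psi(p'),\quad M:=\tfrac{K_0}{2}|c'-p'|^2.
\]
Here $b>0$ because $p\in\Omega$, while $a>\tfrac R4-\tfrac{K_0R^2}{32}$ and $M<\tfrac{K_0}{2}(|c'|+|p'|)^2<\tfrac{25K_0R^2}{32}$, so the choice $R\le1/(4K_0)$ forces $a>M$. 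Then $(1-s)a+sb-Ms(1-s)\ge(1-s)(a-M)+sb>0$ throughout $[0,1]$, since the two nonnegative summands cannot vanish simultaneously. Hence $q(s)\in\Omega$, so $[c,p]\subseteq B_d(x_0,R)\cap\Omega$; together with $B_d(\hat x_0,R/4)\subseteq B_d(x_0,R)\cap\Omega$ this is exactly star-shapedness with core $B_d(\hat x_0,R/4)$.

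The main obstacle I anticipate is obtaining the non-degenerate lower bound for $h$: a naive triangle-inequality bound for $\psi(q'(s))$ degenerates as $s\to1$, precisely where $p$ may sit arbitrarily close to $\partial\Omega$, and the clean remedy is the convexity of $\psi+\tfrac{K_0}{2}|\cdot|^2$, which converts the Hessian bound into the displayed inequality; after that the smallness of $R$ (a curvature correction of size $O(R^2)$ against a core sitting at depth $\sim R$) gives $a>M$ and closes the argument. The remaining ingredients — the two-case split and the elementary inclusions inside the chart — are routine bookkeeping.
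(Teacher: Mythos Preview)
Your argument is correct and follows the same reduction as the paper: describe the boundary locally as a $C^2$ graph (exactly the chart from the proof of \cref{thm:final-hoelder-regularity}), split into the interior case $d_0\ge R$ and the near-boundary case, and in the latter push $\hat x_0$ a fixed fraction of $R$ along the inner normal. The paper's own proof stops at this point with the sentence ``in this setting, the claimed point $\hat x_0$ can always be found'', so your proposal is a complete working-out of what the paper leaves implicit.

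The one genuine addition you make is the convexity device: rather than arguing via ``arbitrarily small gradient'' as the paper hints, you exploit the second-order bound through the convexity of $\psi+\tfrac{K_0}{2}|\cdot|^2$, which converts the Hessian control into the clean linear-minus-quadratic lower bound $h(s)\ge (1-s)a+sb-Ms(1-s)$. This is a slightly different (and arguably sharper) route to the same conclusion, and it yields the explicit admissible scale $R_0=\min\{L_0/2,\,1/(4K_0)\}$; the paper's sketch would presumably take $R_0$ smaller (of order $(dK_0)^{-1}$ as in the proof of \cref{thm:final-hoelder-regularity}) to make the gradient small enough for a more direct visibility argument. Your numerics check out: with $K_0R\le 1/4$ one gets $a>R/4-K_0R^2/32$ and $M<25K_0R^2/32$, hence $a-M>R/4-26K_0R^2/32\ge R/4-13R/64>0$, so the final positivity step is sound.
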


\begin{proof}
  Like in the proof of \cref{thm:final-hoelder-regularity}, we use that the  \(\fsC^2\) 
  regularity of the domain  implies that for a small enough \(R_0\), the boundary can be
  described by a graph corresponding to a function with arbitrary small gradient.  In this setting, the
  claimed point \(\hat x_0\) can always be found.
\end{proof}

The next building block of the proof is the following Lemma.
\begin{lemma}\label{thm:estimate-ball}
  Let $\Omega \subset \R^d$ be a bounded \(\fsC^2\) domain and \(R_0>0\) obtained from
  \cref{thm:star-shaped-radius}.  Then for any \(x_0 \in \Omega\) and \(0<R\le R_0\), in
  the cut ball \(B := B_d(x_0,R) \cap \Omega\), any functions $u: B \to \R^+$, $w: B \to \R$
  satisfying $u \le \Delta w$ in \(B\) can for any \(p,q \in [1,\infty)\) be estimated as
  \begin{equation}
    \label{eq:interpolation:ball-estimate}
    \| u \|_{\fsL^q(B)}
    \le C_{d,p,q}\, \bigg[
    R^{1-d \left( \frac{1}{p} - \frac{1}{q} \right)}
    \| \nabla u \|_{\fsL^p(B)}
    +
    R^{-2+\frac{d}{q}}
    \| w \|_{\fsL^{\infty}(B)} \,\bigg] ,
  \end{equation}
  where $C_{d,p,q}>0$ is a constant depending only on $d,p,q$.

  In the same way, for any $\alpha \in (0,1)$,
  \begin{equation}
    \label{eq:interpolation:ball-estimate-alpha}
    \| u \|_{\fsL^q(B)}
    \le C_{d,p,q, \alpha}\, \bigg[
    R^{1-d \left( \frac{1}{p} - \frac{1}{q} \right)}
    \| \nabla u \|_{\fsL^p(B)}
    +
    R^{-2+ \alpha + \frac{d}{q}}
    \| w \|_{\fsC^{\alpha}( \overline{B} )} \,\bigg] ,
  \end{equation}
  where $C_{d,p,q, \alpha}>0$ is a constant depending only on $d,p,q, \alpha$.
\end{lemma}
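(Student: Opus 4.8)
The plan is to adapt the classical argument for Sobolev--Poincaré inequalities on star-shaped domains: one writes $u(x)$, for $x$ in the cut ball, as its average over a small interior ``core'' ball plus a Riesz-type potential of $\nabla u$, and one estimates that core average using the one-sided bound $0\le u\le\Delta w$ rather than a direct $\fsL^1$ estimate on $u$. By \cref{thm:star-shaped-radius}, $B=B_d(x_0,R)\cap\Omega$ is star-shaped with core $B_d(\hat x_0,R/4)\subseteq B$; shrinking once more, it is also star-shaped with respect to $B_1:=B_d(\hat x_0,R/8)$, for which $|B_1|=c_d R^d$ and $\diam B\le 2R$.

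The first step is to estimate the core average. Pick $\chi\in\fsC^\infty_c(B_d(\hat x_0,R/4))$ with $0\le\chi\le1$, $\chi\equiv1$ on $B_1$ and $\|\Delta\chi\|_{\fsL^\infty}\le C_d R^{-2}$. Since $\chi$ is supported in $B$ and $0\le u\le\Delta w$ there, two integrations by parts (no boundary terms) give
\[
  \int_{B_1}u\;\le\;\int_B\chi\,u\;\le\;\int_B\chi\,\Delta w\;=\;\int_B(\Delta\chi)\,w\;\le\;C_d\,R^{d-2}\,\|w\|_{\fsL^\infty(B)},
\]
so that $|B_1|^{-1}\int_{B_1}u\le C_d R^{-2}\|w\|_{\fsL^\infty(B)}$. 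For the Hölder variant one uses in addition that $\int_B\Delta\chi=0$ to subtract the constant $w(\hat x_0)$ before taking absolute values, together with $|w(y)-w(\hat x_0)|\le[w]_{\fsC^\alpha(\overline{B})}(R/4)^\alpha$ on $\supp\chi$; this replaces the bound by $C_{d,\alpha}R^{-2+\alpha}\|w\|_{\fsC^\alpha(\overline{B})}$.

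The second step is the representation of the remainder. By density it suffices to treat $u\in\fsC^1(\overline{B})$. Star-shapedness with respect to $B_1$ gives, for every $x\in B$, the identity
\[
  u(x)=\frac1{|B_1|}\int_{B_1}u(z)\,\dd z+\frac1{|B_1|}\int_{B_1}\int_0^1\nabla u\big(z+t(x-z)\big)\cdot(x-z)\,\dd t\,\dd z;
\]
since $u\ge0$, both terms on the right are nonnegative, so after the standard change of variables $y=z+t(x-z)$ (exactly as in the usual proof of the Sobolev inequality on star-shaped domains; the change of variables produces a dimensionless constant $C_d$, because $\diam B$ and the core radius $R/8$ are both comparable to $R$) one gets
\[
  0\;\le\;u(x)\;\le\;\frac1{|B_1|}\int_{B_1}u\,\dd z\;+\;C_d\int_B\frac{|\nabla u(y)|}{|x-y|^{d-1}}\,\dd y .
\]
Taking $\fsL^q(B)$ norms, the first term contributes at most $|B|^{1/q}$ times the core average, i.e.\ $C_d R^{d/q}R^{-2}\|w\|_{\fsL^\infty(B)}$ (respectively $C_{d,\alpha}R^{d/q}R^{-2+\alpha}\|w\|_{\fsC^\alpha(\overline{B})}$), while Young's inequality on $B\subseteq B_d(x_0,R)$ gives
\[
  \Big\|\int_B\frac{|\nabla u(y)|}{|x-y|^{d-1}}\,\dd y\Big\|_{\fsL^q(B)}
  \le\big\|\,|\cdot|^{-(d-1)}\ind_{\{|\cdot|\le 2R\}}\big\|_{\fsL^s(\R^d)}\,\|\nabla u\|_{\fsL^p(B)}
  =C_{d,p,q}\,R^{\,1-d(1/p-1/q)}\,\|\nabla u\|_{\fsL^p(B)} ,
\]
with $1/s=1+1/q-1/p$; the $\fsL^s$ norm is finite precisely when $\tfrac1p-\tfrac1q\le\tfrac1d$, which is the range relevant to \cref{inpppg} (in the borderline case one replaces Young's inequality by the Sobolev embedding on the Lipschitz domain $B$ with the matching scaling). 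Adding the two contributions yields exactly \eqref{eq:interpolation:ball-estimate} and \eqref{eq:interpolation:ball-estimate-alpha}.

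The points needing care are the approximation argument making the pointwise representation legitimate for merely $\fsW^{1,p}$ data up to the boundary of the cut ball, and the bookkeeping of the powers of $R$ through the change of variables so that they assemble into the scaling-consistent exponents above. The one-sided hypothesis $u\le\Delta w$ enters only through the elementary cut-off estimate of the core average; the rest of the argument is insensitive to it.
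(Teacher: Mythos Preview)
Your argument is correct and follows essentially the same route as the paper: split $u$ into a smooth core average plus an oscillation, bound the average via $u\le\Delta w$ and integration by parts against a smooth bump (using $\int\Delta\chi=0$ for the H\"older case), and control the oscillation by the fundamental theorem of calculus on the star-shaped cut ball together with Young's inequality for the Riesz kernel $|z|^{1-d}$. The only cosmetic difference is that the paper takes a \emph{weighted} average $\bar u=\int u(\hat x_0-\cdot)\chi_R$ with a mollifier, whereas you take the unweighted average over $B_1$ and dominate it by $\int\chi u$; both lead to the same estimates. One small slip: the claim ``since $u\ge0$, both terms on the right are nonnegative'' is false (the remainder $u(x)-|B_1|^{-1}\!\int_{B_1}u$ need not have a sign) and is also unnecessary---you only use $u(x)\le\text{average}+|\text{remainder}|$, which is immediate.
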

\begin{proof}
  We consider a smooth weight function \(\chi :\R^d \to \R^+\) such that
  \(\supp \chi \subset B_d(0,1/4)\) and \(\int \chi = 1\), and its scaled version
  \begin{equation*}
    \chi_R(x) = R^{-d} \chi(x/R),
  \end{equation*}
  so that \(\supp \chi_R \subset B_d(0,R/4)\) and \(\int \chi_R = 1\).

  Given \(x_0 \in \Omega\) and \(0 < R \le R_0\), we consider some \(\hat x_0\) given by
  \cref{thm:star-shaped-radius} and define the average
  \begin{equation*}
    \bar u := \int_{x \in B_d(0,R/4)} u(\hat x_0-x)\, \chi_R(x)\, \dd x.
  \end{equation*}
  We then estimate the \(\fsL^q\) norm of $u$ as
  \begin{equation*}
    \| u \|_{\fsL^q(B)}
    \le \| u - \bar u \|_{\fsL^q(B)}
    + \| \bar u \|_{\fsL^q(B)}.
  \end{equation*}

  For the second term, we estimate (with $c_d$ the volume of $B_d(0,1)$)
  \begin{equation}\label{presti}
    \begin{aligned}
      \| \bar u \|_{\fsL^q(B)}^q
      &= |B|\, |\bar u|^q
        \le c_d\,R^d \, \bigg( \int \chi_R(x)\, u(\hat x_0 - x)\, \dd x\,\bigg)^q\\
      &\le c_d\,R^d \, \bigg( \int \chi_R(x)\, \Delta w(\hat x_0-x)\, \dd x\,\bigg)^q
        \le c_d\,R^d \, \bigg( \int \Delta \chi_R(x)\,  w(\hat x_0-x)\, \dd x\,\bigg)^q\\
      &\le  c_d\,R^d \, \bigg( R^{-d-2} \int \Delta \chi ( \frac{x}R )\,  w(\hat x_0-x)\, \dd x\,\bigg)^q
        \le c_d\, R^{d-2q} \, \|\Delta \chi\|_{\fsL^1(\R^d)}^q\, \|w\|_{\fsL^{\infty}(B)}^q.
    \end{aligned}
  \end{equation}
  For the Hölder regularity, the estimate can be changed as
  \begin{equation}
    \begin{aligned}
      \| \bar u \|_{\fsL^q(B)}^q
      &\le c_d\,R^d \, \bigg( \int_{B_d(0,R/4)} \Delta \chi_R(x)\,  w(\hat x_0-x)\, \dd x\,\bigg)^q\\
      &\le c_d\,R^d \, \bigg( \int_{B_d(0,R/4)} \Delta \chi_R(x)\,  \bigg[w(\hat x_0-x)
        - |B_d(0,R/4)|^{-1} \int_{B_d(0,R/4)} w(\hat x_0-y)\,\dd y \, \bigg] \, \dd x\,\bigg)^q\\
      &= 2^{-q}\, c_d\,R^{d}\, \bigg( |B_d(0,R/4)|^{-1}\, \int_{B_d(0,R/4)}\int_{B_d(0,R/4)} |x-y|^{\alpha} \,
        [\Delta \chi_R(x) - \Delta \chi_R(y)] \,
        \frac{w(\hat x_0-x) - w(\hat x_0-y)}{|x - y|^{\alpha} } \, \dd x\, \dd y  \,\bigg)^q\\
      &\le 2^{q} (c_d\, R^{d})^{1-q} \, \|w\|_{\fsC^{\alpha}(\overline{B})}^q\,
        \bigg(\, R^{-2 - d} \int_{B_d(0,R/4)}\int_{B_d(0,R/4)} |x-y|^{\alpha} \,
        \bigg|\Delta \chi ( \frac{x}R) - \Delta \chi (\frac{y}R) \,\bigg|  \, \dd x\, \dd y \,\bigg)^q\\
      &=  2^{q}\, c_d^{1 -q}\, R^{d - (2 - \alpha)q} \, \|w\|_{\fsC^{\alpha}(\overline{B})}^q\,
        \bigg(\,  \int\int |\xi-\eta|^{\alpha} \,
        \bigg|\Delta \chi ( \xi) - \Delta \chi (\eta) \,\bigg|  \, \dd \xi\, \dd \eta \,\bigg)^q .
    \end{aligned}
  \end{equation}
   Note that here, the quantities $\|\Delta \chi\|_{\fsL^1(\R^d)}$ and $ \int\int |\xi-\eta|^{\alpha} \,
        \bigg|\Delta \chi ( \xi) - \Delta \chi (\eta) \,\bigg|  \, \dd \xi\, \dd \eta$ 
  are just  dimensional constants.
  \medskip

  For the first term, we use a Poincaré-Sobolev inequality, that we prove thanks
  to elementary computations. We first observe that for $x\in B$,
  \begin{equation*}
    u(x) - \bar u
    = \int [u(x) - u(y)]\, \chi_R(\hat x_0-y)\, \dd y
    = \int \int_{s=0}^1 \nabla u (x + s(y-x)) \cdot (x-y)\,  \chi_R(\hat x_0-y)\,\dd s\, \dd y,
  \end{equation*}
  where we use the fundamental theorem of calculus and the fact that \(B\) is star-shaped
  with the core \(B_d(\hat x_0,R/4)\).  Hence, using the change of variable
  $z := s \,(y-x)$,
  \begin{equation*}
    \begin{aligned}
      |u(x) - \bar u|
      &\le \int_{x + z \in B} \int_{s=0}^1  |\nabla u (x + z)| \frac{|z|}{s} \,
        \left|\chi_R \left(\hat x_0 -x - \frac{z}s \right)\right|\, \frac{\dd s}{s^d}\, \dd z\\
      &\le  R^{-d} \|\chi\|_{\infty}
        \int_{x + z \in B} |\nabla u (x + z)| \, |z|  \int_{s=0}^1  \ind_{\{ \frac{|z|}{s} \le |\hat x_0-x|+R\} }  \frac{\dd s}{s^{d+1}}\, \dd z\\
      &\le R^{-d} \|\chi\|_{\infty}
        \int_{x + z \in B} \ind_{\{|z| \le 2R\} }  |\nabla u (x + z)| \, |z|  \, \int_{s= \frac{|z|}{3R}}^1   \frac{\dd s}{s^{d+1}}\, \dd z\\
      &\le  \frac{3^d}d\, \|\chi\|_{\infty} \, \int_{x + z \in B}  |\nabla u (x + z)| \, |z|^{1-d}  \,  \ind_{\{|z| \le 2R\} }\, \dd z ,
    \end{aligned}
  \end{equation*}
  so that using Young's inequality for convolutions, we end up, when
  $\frac1q + 1 = \frac1p + \frac1r$, with
  \begin{equation} \label{presti2}
    \begin{aligned}
      \| u - \bar{u}\|_{\fsL^q(B)}
      &\le  \frac{3^d}d\, \|\nabla u\|_{\fsL^p(B)} \; \; \| z \mapsto |z|^{1-d} \ind_{|z| \le 2R}\|_{\fsL^r(\R^d)}  \\
      &\le C_{d,p,q}\,  R^{1-d \left( \frac{1}{p} - \frac{1}{q} \right)}\, \|\nabla u\|_{\fsL^p(B)} .
    \end{aligned}
  \end{equation}
  The statement of the Lemma is obtained by putting together estimates
  \eqref{presti} and \eqref{presti2}.
\end{proof}

 We write down a final Lemma before concluding the proof of Proposition \ref{inpppg}.
 
\begin{lemma}[Covering]\label{thm:interpolation:covering}
  Let $\Omega \subset \R^d$ be a bounded \(\fsC^2\) domain and \(R_0>0\) obtained by
  \cref{thm:star-shaped-radius}.  There exists a constant \(c_d>0\) depending
  only on the dimension $d$ such that for every \(A \in (0,\infty)\), $\alpha \in [0,1)$ and
  $u \in \fsW^{1,p}_{loc}(\Omega)$ for $p > \frac{d}{3 - \alpha}$, there exists a cover
  \(\mathcal B\) of balls $B$ of $\Omega$ satisfying the following properties:
  \begin{itemize}
    \item for every \(B = B_d(x_0,R) \in \mathcal B\), we have that \(R \le R_0\). 
\par 
Moreover,  if \(R=R_0\), it holds that
    \begin{equation*}
      \| \nabla u \|_{\fsL^p(B\cap\Omega)} \le
      A\, R^{-3+ \alpha + \frac{d}{p}} ,
    \end{equation*}
   and,  if \(R<R_0\), it holds that
    \begin{equation*}
      \| \nabla u \|_{\fsL^p(B\cap\Omega)} =
      A\, R^{-3+ \alpha + \frac{d}{p}}.
    \end{equation*}
    \item every point of \(\Omega\) is covered by at most \(c_d\) balls.
  \end{itemize}
\end{lemma}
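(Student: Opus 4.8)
The plan is to attach to each point $x_0 \in \Omega$ an adapted radius $R(x_0) \in (0, R_0]$, defined by comparing the local $\fsL^p$-norm of $\nabla u$ on the cut ball to the prescribed power of the radius, and then to thin out the resulting family $\{ B_d(x_0, R(x_0)) \}_{x_0 \in \Omega}$ into a bounded-overlap subcover by the Besicovitch covering theorem.

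First I would fix $x_0 \in \Omega$ and set $g_{x_0}(R) := \| \nabla u \|_{\fsL^p(B_d(x_0,R) \cap \Omega)} \in [0,\infty]$ for $R \in (0,R_0]$. Since $|\nabla u|^p \in \fsL^1_{loc}(\Omega)$ and $x_0$ is an interior point, $g_{x_0}$ is nondecreasing, continuous as a $[0,\infty]$-valued map (by monotone convergence, the mass carried by a sphere $\{|x-x_0|=R\}$ being zero), and $g_{x_0}(R) \to 0$ as $R \downarrow 0$. On the other hand the hypothesis $p > \frac{d}{3-\alpha}$ means $-3+\alpha+\frac dp < 0$, so $R \mapsto A\,R^{-3+\alpha+\frac dp}$ is continuous, strictly decreasing, and blows up as $R \downarrow 0$. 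Consequently, either $g_{x_0}(R) < A\, R^{-3+\alpha+\frac dp}$ for all $R \in (0,R_0]$, in which case I set $R(x_0) := R_0$ and the inequality of the Lemma for the case $R = R_0$ holds; or else, by continuity and monotonicity, there is a smallest $R(x_0) \in (0,R_0)$ at which the two sides coincide, which yields the identity of the Lemma for the case $R < R_0$. In both cases $R(x_0) \le R_0$.

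Then I would apply the Besicovitch covering theorem to the bounded set $\Omega$ and to the family $\{ B_d(x_0, R(x_0)) : x_0 \in \Omega \}$ of balls centred at points of $\Omega$, whose radii are uniformly bounded by $R_0$. This produces a countable subfamily $\mathcal B$ that still covers $\Omega$ and for which every point of $\R^d$ — a fortiori every point of $\Omega$ — lies in at most $c_d$ of the balls, with $c_d$ depending only on $d$. Each ball of $\mathcal B$ is one of the $B_d(x_0, R(x_0))$, hence inherits both $R(x_0) \le R_0$ and the norm (in)equality established above, which is exactly the assertion.

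I do not expect a genuine obstacle here: the only delicate points are verifying that $g_{x_0}$ is monotone and continuous as a $[0,\infty]$-valued function (so that the crossing radius is well defined even when $\nabla u$ fails to be integrable up to $\partial\Omega$, in which case $g_{x_0}$ may take the value $+\infty$ for large $R$ while the crossing nonetheless occurs at a smaller radius), and keeping track of the sign $-3+\alpha+\frac dp < 0$, which is precisely what the assumption $p > \frac{d}{3-\alpha}$ guarantees. One could also replace the Besicovitch step by a direct Vitali-type selection, but Besicovitch is preferable because it keeps the balls themselves rather than dilates, which is essential for preserving the norm condition.
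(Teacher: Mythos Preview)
Your proposal is correct and matches the paper's proof almost exactly: the paper likewise defines $R(x)$ for each $x\in\Omega$ by the crossing condition (phrased there via the increasing function $R\mapsto R^{3-\alpha-d/p}\|\nabla u\|_{\fsL^p(B_d(x,R)\cap\Omega)}$ starting at $0$, compared to the constant $A$) and then invokes Besicovitch. The only cosmetic difference is this repackaging of the threshold; your additional remarks on continuity of $g_{x_0}$ and on why Vitali would be inferior are fine elaborations not spelled out in the paper.
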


\begin{proof}
  For a point \(x \in \Omega\), we consider the function
  \(R \mapsto R^{3 - \alpha -\frac{d}{p}} \| \nabla u \|_{\fsL^p(B_d(x,R)\cap\Omega)}\).  This function
  starts for \(R=0\) at zero, and is increasing.  Hence either
  \(\| \nabla u \|_{\fsL^p(B_d(x,R_0)\cap\Omega)} \le A\, R_0^{-3 + \alpha +\frac{d}{p}}\) and we set
  \(R(x)=R_0\), or there exists \(0<R(x)\le R_0\) such that
  \(\| \nabla u \|_{\fsL^p(B_d(x,R(x))\cap\Omega)} = A\, (R(x))^{-3 + \alpha +\frac{d}{p}}\).

  The result then follows from Besicovitch covering Theorem by considering the
  balls \(B_d(x,R(x))\) for each $x \in \Omega$.
\end{proof}

\begin{proof}[Proof of \cref{inpppg}]
  We consider (for $p,q, \alpha$ satisfying the conditions of the Proposition)
  functions \(u \in \fsW^{1,p}_{loc}(\Omega)\), $w \in \fsC^{\alpha}(\overline{\Omega} )$. Here and in the rest
  of the proof, in the case when $\alpha=0$, $\fsC^{\alpha}( \overline{\Omega} )$ is replaced by
  $\fsL^{\infty}(\Omega)$ (and the corresponding norm becomes the $\|\,\,\|_{\infty}$ norm).

  Then we take \(A := \| w \|_{\fsC^{\alpha}( \overline{\Omega} )}\) and use
  \cref{thm:interpolation:covering} in order to get a covering \(\mathcal B\)
  satisfying the conclusion of \cref{thm:interpolation:covering}.  By the
  covering, we find that
  \begin{equation*}
    \| u \|_{\fsL^q(\Omega)}^q \le  \sum_{B \in \mathcal B} \| u \|_{\fsL^q(B\cap\Omega)}^q
    \le C_{d,p,q, \alpha}^q
    \sum_{B \in \mathcal B}
    \left[
      R^{1-d \left( \frac{1}{p}-\frac{1}{q} \right)} \,  \| \nabla u \|_{\fsL^p(B\cap\Omega)}
      + R^{- 2 + \alpha + \frac{d}q} \,  \| w \|_{\fsC^{\alpha}(\overline{B\cap\Omega})}
    \right]^q,
  \end{equation*}
  where we applied in the last step \eqref{eq:interpolation:ball-estimate-alpha} in
  \cref{thm:estimate-ball} to each cut ball $B \cap \Omega$, with \(B \in \mathcal B\).

  For each ball \(B \in \mathcal B\), the choice of \(R\) implies in the case that
  \(R < R_0\) that
  \begin{equation*}
    R^{1-d \left( \frac{1}{p}-\frac{1}{q} \right)} \,  \| \nabla u \|_{\fsL^p(B\cap\Omega)}
    = R^{- 2 + \alpha + \frac{d}q} \,  \| w \|_{\fsC^{\alpha}(  \overline{B\cap\Omega} )}
    = \| w \|_{\fsC^{\alpha}( \overline{\Omega} )}^{\frac{1-d \left( \frac{1}{p}-\frac{1}{q} \right)}{3- \alpha - d/p}}
    \|\nabla u \|_{\fsL^p(B\cap\Omega)}^{\frac{2 - \alpha-d/q}{3 - \alpha -d/p}}.
  \end{equation*}
  The case \(R = R_0\) can only apply a fixed number of times as each such ball
  covers at least a fixed volume and by the covering the volume of all balls is
  bounded (as the domain is bounded).  In this case
  \begin{equation*}
    R^{1-d \left( \frac{1}{p}-\frac{1}{q} \right)} \,  \| \nabla u \|_{\fsL^p(B\cap\Omega)}
    + R^{- 2 + \alpha + \frac{d}q} \,  \| w \|_{\fsC^{\alpha}( \overline{B\cap\Omega} )}
    \le
    \| w \|_{\fsC^{\alpha}( \overline{\Omega} )}^{\frac{1-d \left( \frac{1}{p}-\frac{1}{q} \right)}{3- \alpha - d/p}}
    \|\nabla u \|_{\fsL^p(B\cap\Omega)}^{\frac{2 - \alpha-d/q}{3 - \alpha -d/p}}
    + R_0^{-2+\alpha+\frac{d}{q}} \| w \|_{\fsC^{\alpha}( \overline{B\cap\Omega} )}.
  \end{equation*}
  Hence we find by the covering that (for a constant $C>0$)
  \begin{equation*}
    \| u \|_{\fsL^q(\Omega)}^q
    \le C\, \| w \|_{\fsC^{\alpha}( \overline{\Omega} )}^{\frac{q-dq \left( \frac{1}{p}-\frac{1}{q} \right)}{3 - \alpha -d/p}}
    \sum_{B \in \mathcal B}
    \|\nabla u \|_{\fsL^p(B\cap\Omega)}^{q \left(\frac{2- \alpha - d/q}{3- \alpha - d/p}\right)}
    +
  C\,  \| w \|_{\fsC^{\alpha}( \overline{\Omega} )}^q.
  \end{equation*}
  As the covering covers every point at most \(c_d\) times, and observing that
  (thanks to the assumption on $p,q, \alpha$ in the Proposition)
  \begin{equation}\label{eq:interpolation:condition-sum}
    q \left( \frac{2 -  \alpha - \frac{d}{q}}{3- \alpha - \frac{d}{p}} \right) \ge p,
  \end{equation}
  we see that
  \begin{equation*}
    \sum_{B \in \mathcal B}
    \|\nabla u \|_{\fsL^p(B\cap\Omega)}^{q \left(\frac{2- \alpha - d/q}{3- \alpha - d/p}\right)}
    \le  \sum_{B \in \mathcal B}
    \|\nabla u \|_{\fsL^p(B\cap\Omega)}^{p} \,\,   \|\nabla u \|_{\fsL^p(\Omega)}^{q \left(\frac{2- \alpha - d/q}{3- \alpha - d/p}\right) -p}
    \le c_d\,
    \|\nabla u \|_{\fsL^p(\Omega)}^{q \left(\frac{2- \alpha - d/q}{3- \alpha - d/p}\right)},
  \end{equation*}
  which yields the sought interpolation.
\end{proof}

\AtNextBibliography{\small}
\printbibliography
\end{document}